\documentclass[11pt,a4paper]{article}
\usepackage{amssymb}
\usepackage{amsmath}
\usepackage{amsfonts}
\usepackage{bbm}
\usepackage{amsthm}
\usepackage{mathrsfs}
\usepackage{hyperref}
\usepackage{color}
\usepackage[utf8]{inputenc}
\usepackage{stmaryrd}
\usepackage{rotating}
\usepackage{xfrac}



\definecolor{darkred}{rgb}{0.8,0.1,0.1}
\hypersetup{
     colorlinks=false,         
     linkcolor=darkred,
     citecolor=blue,
}

\usepackage{comment}
\usepackage[margin=2.6cm]{geometry}
\usepackage[all,cmtip]{xy}

\usepackage{marvosym}

\theoremstyle{plain}
\newtheorem{theo}{Theorem}[section]
\newtheorem{lem}[theo]{Lemma}
\newtheorem{propo}[theo]{Proposition}
\newtheorem{cor}[theo]{Corollary}

\theoremstyle{definition}
\newtheorem{defi}[theo]{Definition}
\newtheorem{ex}[theo]{Example}

\newtheorem{rem}[theo]{Remark}

\numberwithin{equation}{section}

\def\nn{\nonumber}


\def\Hom{\mathrm{Hom}}
\def\hom{\mathrm{hom}}


\def\dd{\mathrm{d}}
\def\op{\mathrm{op}}
\def\id{\mathrm{id}}
\def\ev{\mathrm{ev}}
\def\MMM{\mathscr{M}}
\def\AAA{\mathscr{A}}
\def\FF{\mathcal{F}}
\def\ra{\triangleright}

\def\bbZ{\mathbb{Z}}

\def\LLL{\mathscr{L}}
\def\RRR{\mathscr{R}}
\def\DDD{\mathrm{ad}_{\bullet}}

\newcommand{\obultimes}{\mathbin{\ooalign{$\otimes$\cr\hidewidth\raise0.17ex\hbox{$\scriptstyle\bullet\mkern4.48mu$}}}}

\newcommand{\obulplus}{\mathbin{\ooalign{$\boxplus$\cr\hidewidth\raise0.295ex\hbox{$\scriptstyle\bullet\mkern4.7mu$}}}}

\newcommand{\bol}[1]{#1}
\newcommand{\udl}[1]{\underline{#1}}

\def\sk{\vspace{2mm}}


\title{%
\Large Nonassociative geometry in quasi-Hopf representation categories II:\\[1mm]
Connections and curvature
}

\author{%
Gwendolyn E. Barnes$^{a}$, Alexander Schenkel$^{b}$ \ and \ Richard J.\ Szabo$^{c}$\vspace{4mm}\\
{\small Department of Mathematics, Heriot-Watt University,}\\
{\small Colin Maclaurin Building, Riccarton, Edinburgh EH14 4AS, United Kingdom}\vspace{1mm}\\
{\small Maxwell Institute for Mathematical Sciences, Edinburgh, United Kingdom}\vspace{1mm}\\
{\small The Higgs Centre for Theoretical Physics, Edinburgh, United Kingdom}\vspace{4mm}\\
 {\footnotesize \texttt{email:} $^a$ \texttt{geb31@hw.ac.uk} ~,~~$^b$  \texttt{as880@hw.ac.uk} ~,~~$^c$ \texttt{R.J.Szabo@hw.ac.uk} }
 }

\date{July 2015}


\begin{document}

\maketitle

\begin{abstract}
\noindent
We continue our systematic development of noncommutative and
nonassociative differential geometry internal to the representation
category of a quasitriangular quasi-Hopf algebra. We describe
derivations, differential operators, differential calculi and
connections using universal categorical constructions to capture
algebraic properties such as Leibniz rules. Our main result is the
construction of morphisms which provide prescriptions for lifting
connections to tensor products and to internal homomorphisms. We describe
the curvatures of connections within our formalism, and also the
formulation of Einstein-Cartan geometry as a putative framework for a
nonassociative theory of gravity.
\end{abstract}

\paragraph*{Report no.:} EMPG--15--{10}
\paragraph*{Keywords:} Noncommutative/nonassociative differential geometry, quasi-Hopf algebras, \break braided monoidal categories, internal homomorphisms, cochain twist quantization
\paragraph*{MSC 2010:} 16T05, 17B37, 46L87, 53D55


\tableofcontents

\section{\label{sec:intro}Introduction and summary}
This paper is the second part in a series of articles whose goal is to systematically develop
a formalism for differential geometry on noncommutative and
nonassociative spaces. The main physical inspiration behind this work
is sparked by the recent observations from closed string theory that
certain non-geometric flux compactifications experience a
nonassociative deformation of the spacetime
geometry~\cite{Bouwknegt:2004ap,Blumenhagen:2010hj,Lust:2010iy,Blumenhagen:2011ph,Mylonas:2012pg,Blumenhagen:2013zpa}
(see~\cite{Lust:2012fp,Mylonasa,Blumenhagen:2014sba} for reviews and
further references), together with the constructions
of~\cite{Mylonasb,Aschieri:2015roa} which show that the corresponding
nonassociative algebras and their basic geometric structures can be
obtained by cochain twist quantization, and hence are commutative and associative
quantities when regarded as objects in a suitable braided monoidal
category. See the first paper in this series~\cite{Barnes:2014},
hereafter referred to as Part~I, for
further motivation and a more complete list of relevant references.
\sk

Earlier categorical approaches to nonassociative geometry along these lines were pursued in~\cite{Bouwknegt:2007sk,BeggsMajid1}. In the present paper we develop important notions of 
differential geometry internal to the representation category ${}^H\MMM$ of a quasitriangular 
quasi-Hopf algebra $H$. In particular, we develop the notions of derivations, differential operators, differential calculi and 
connections by using universal categorical constructions such as categorical limits. 
In contrast to the approach of~\cite{BeggsMajid1}, our geometric structures
are described by internal homomorphisms instead of morphisms in the category ${}^H\MMM$.
This leads to a much richer framework, because the conditions for being a morphism
in ${}^H\MMM$ (i.e.\ $H$-equivariance) are very restrictive and hence the
framework in~\cite{BeggsMajid1} allows for only very special geometric structures.
Our internal homomorphism approach is inspired by the formalism of~\cite{AschieriSchenkel} (see~\cite{Schenkel:2011ae,Aschieri:2012af} for overviews),
and it clarifies these ideas and constructions in categorical terms.
\sk

We begin in Section~\ref{sec:gradedrepresenttions} with a brief review 
of the categorical framework which was developed in Part~I.
In contrast to that paper, in the present paper we consider the case where all
modules are $\bbZ$-graded; this allows us later on to regard graded objects
such as differential calculi naturally as objects in our categories.
\sk

In Section~\ref{sec:derivations} we introduce derivations $\mathrm{der}(A)$ on braided commutative algebras $A$
in ${}^H\MMM$ by formalizing the Leibniz rule in terms of an equalizer in ${}^H\MMM$. We analyse structural properties of $\mathrm{der}(A)$ and in particular prove that, in the case
where $H$ is triangular, $\mathrm{der}(A)$ together with an internal commutator $[\,\cdot\, ,\,\cdot\, ]$
is a Lie algebra in ${}^H\MMM$. We then introduce differential operators
$\mathrm{diff}(V)$ on symmetric $A$-bimodules $V$ in ${}^H\MMM$ by again using a suitable
equalizer in ${}^H\MMM$ to capture the relevant algebraic properties. We show that
$\mathrm{diff}(V)$ is an algebra in ${}^H\MMM$ and we also
prove that the zeroth order differential operators are the internal endomorphisms $\mathrm{end}_A(V)$
in the category of symmetric $A$-bimodules ${}^H_{}{}^{}_{A}\MMM^{\mathrm{sym}}_{A}$.
Using the product structure on differential operators to formalize nilpotency of
a differential, we can then give a definition of a differential calculus in ${}^H\MMM$.
\sk

In Section~\ref{sec:connections} we develop an appropriate notion of connections $\mathrm{con}(V)$ 
on objects $V$ in ${}^{H}_{}{}^{}_{\bol{A}}\MMM_{\bol{A}}^{\mathrm{sym}}$. The idea is
to formalize a generalization of the usual Leibniz rule with respect to a differential calculus
in terms of an equalizer in ${}^H\MMM$. The resulting object
$\mathrm{con}(V)$ is analysed in detail and it is shown that the usual affine space
of ordinary connections arises as a certain proper subset of  $\mathrm{con}(V)$. Our more
flexible definition of connections has the advantage that $\mathrm{con}(V)$
also forms an object in ${}^H\MMM$ in addition to being an affine space.
We then develop a lifting prescription for connections to tensor products $V\otimes_A W$
of objects $V,W$ in ${}^{H}_{}{}^{}_{\bol{A}}\MMM_{\bol{A}}^{\mathrm{sym}}$. 
It is important to notice that our notion of tensor product connections differs from the
standard one: Although our techniques are only applicable to
braided commutative algebras and their bimodules in ${}^H\MMM$, they are more
flexible in the sense that {\em any} two connections can be lifted to a tensor product connection, not only those which satisfy the very restrictive `bimodule connection' property proposed in~\cite{Mourad,DuViMasson,Bresser,DuViLecture}.
We also develop a lifting prescription for connections to internal homomorphisms
$\hom_A(V,W)$ of objects $V,W$ in
${}^{H}_{}{}^{}_{\bol{A}}\MMM_{\bol{A}}^{\mathrm{sym}}$. These lifts
are all important ingredients in (noncommutative and nonassociative) Riemannian geometry
for extending e.g.\ tangent bundle connections to all tensor fields,
and they play an instrumental role in physical applications of our
formalism to noncommutative and nonassociative gravity theories such
as those anticipated to arise in non-geometric string theory. All of these
constructions moreover generalize and clarify the corresponding constructions of~\cite{AschieriSchenkel} 
in categorical terms. 
\sk

Finally, in Section~\ref{sec:curvature} we assign curvatures to connections and show that
they are internal endomorphisms in the category ${}^{H}_{}{}^{}_{\bol{A}}\MMM_{\bol{A}}^{\mathrm{sym}}$,
provided that $H$ is triangular. We also obtain a Bianchi tensor,
which in classical differential geometry would identically vanish; in
general it is not necessarily equal to $0$,
and hence in this sense it
characterizes the noncommutativity and nonassociativity of our geometries.
We further observe that the curvature of any tensor product connection is the sum of the two individual curvatures,
which means that curvatures behave additively in an appropriate sense.
We conclude with a brief outline of how our formalism
could be used to describe a noncommutative and nonassociative theory of
gravity coupled to Dirac fields; our considerations are based on Einstein-Cartan geometry
and its noncommutative generalization which was developed in~\cite{Catellani:2009}.


\section{\label{sec:gradedrepresenttions}Categorical preliminaries}
Let $k$ be an associative and commutative ring with unit $1\in k$. In
contrast to Part~I,
in this paper we shall work with $\bbZ$-graded $k$-modules.
This will have the advantage later on that naturally graded objects such as differential calculi can be described as objects 
in the categories we define below, and also that minus signs will be absorbed into the formalism.
The goal of this section is to adapt the material developed in~\cite{Barnes:2014} to the graded setting
and to thereby also fix our notation for the present paper.


\subsection{$\bbZ$-graded $k$-modules}
The category $\MMM^{}$ of bounded $\bbZ$-graded $k$-modules is defined as follows:
The objects in $\MMM$ are the bounded $\bbZ$-graded $k$-modules
\begin{flalign}
\udl{V} = \bigoplus_{n \in \bbZ}\, \udl{V}_{\,n}~,
\end{flalign}
where the $k$-modules $\udl{V}_{\,n} = 0$ for all but finitely many $n$. 
The morphisms in $\MMM$ are the degree preserving $k$-linear maps $f : \udl{V}\to \udl{W}$, i.e.\
$f(\udl{V}_{\,n}) \subseteq \udl{W}_{\,n}$ for all $n\in \bbZ$. For any object
$\udl{V}$ in $\MMM$ there is a map
\begin{flalign}
\vert \cdot \vert: \bigsqcup_{n \in \bbZ}\, \udl{V}_{\,n} \longrightarrow \bbZ~,
\end{flalign}
which assigns to elements $v\in \udl{V}_{\,n}$ their degree $\vert
v\vert =n$. Elements of $\udl{V}_{\,n}$ are said to be homogeneous of degree $n$.
\sk

The category $\MMM$ is monoidal with monoidal functor $\otimes : \MMM\times \MMM \to \MMM$
given by the $\bbZ$-graded tensor product: For any two objects $\udl{V},\udl{W}$ in $\MMM$ we define
\begin{flalign}\label{eqn:tensorgraded}
\udl{V} \otimes^{} \udl{W} := \bigoplus_{n \in \bbZ}\, \big(\udl{V} \otimes^{} \udl{W}\big)_{n} 
:= \bigoplus_{n\in\bbZ} \ \bigoplus_{m + l = n} \, \udl{V}_{\,m} \otimes^{} \udl{W}_{\,l}~,
\end{flalign}
where $\udl{V}_{\,m} \otimes^{} \udl{W}_{\,l}$ is the usual tensor product of $k$-modules. 
To any $\MMM^{} \times \MMM^{}$-morphism $(f: \udl{V} \to \udl{V}^\prime,~ g: \udl{W} \to \udl{W}^\prime\, )$ 
the monoidal functor assigns the $\MMM^{}$-morphism 
\begin{flalign}\label{eqn:tensormorphgraded}
f \otimes^{} g: \udl{V} \otimes^{} \udl{W}
\longrightarrow \udl{V}^\prime \otimes^{} \udl{W}^\prime ~, \qquad v \otimes^{} w\longmapsto  f(v) \otimes^{} g(w)~.
\end{flalign}
The unit object in $\MMM$ is
given by the ring $k$ itself, but regarded as a $\bbZ$-graded $k$-module with 
$k_n =0$, for all $n\neq 0$, and $k_0 = k$.
The associator in $\MMM^{}$ is the natural isomorphism 
\begin{flalign}
\udl{\Phi}^{}: \otimes^{} \circ \big(\otimes^{}\times \id^{}_{\MMM^{}}\big) 
\Longrightarrow \otimes^{}\circ \big(\id^{}_{\MMM^{}}\times \otimes^{}\big)~,
\end{flalign}
whose components are identity maps. The unitors in $\MMM^{}$ are the natural isomorphisms 
\begin{flalign}
\udl{\lambda} : k\otimes^{} \text{--} \Longrightarrow \id_{\MMM^{}}
\qquad \mbox{and} \qquad \udl{\rho} : \text{--}\otimes^{} k \Longrightarrow \id_{\MMM^{}}~,
\end{flalign}
whose components are 
$\udl{\lambda} : k\otimes^{} \udl{V}\to \udl{V} \,,~ c \otimes^{} v\mapsto c \,v$ and 
$\udl{\rho} : \udl{V}\otimes^{} k \to \udl{V}\,,~v\otimes^{} c \mapsto c\,v$. 
Here and in the following we shall refrain from writing indices on the 
components of natural transformations.
\sk

The monoidal category $\MMM$ is also braided.
Denoting by $\otimes^\op : \MMM\times \MMM\to \MMM$ the functor
taking the opposite $\bbZ$-graded tensor product, i.e.\ $\udl{V} \otimes^{\op} \udl{W} := \udl{W}\otimes \udl{V}$
and similarly for morphisms, the braiding is the natural isomorphism
\begin{flalign}
\udl{\tau} : \otimes \Longrightarrow \otimes^{\op}~,
\end{flalign}
whose components are given by
\begin{flalign}
\udl{\tau} : \udl{V}\otimes \udl{W}\longrightarrow \udl{W}\otimes \udl{V}~, \qquad
v\otimes w \longmapsto (-1)^{\vert v\vert\,\vert w\vert} ~w\otimes v~,
\end{flalign}
for all homogeneous $v\in \udl{V}$ and $w\in \udl{W}$. 
\sk

Finally, $\MMM^{}$ is a braided closed monoidal category with
internal $\hom$-functor 
\begin{flalign}
\udl{\hom}^{}: \MMM^\op \times \MMM^{} \longrightarrow \MMM^{}~.
\end{flalign}
For any object $(\udl{V},\udl{W})$ in $\MMM^\op \times \MMM^{}$ we define
\begin{flalign}\label{eqn:internalhomgraded}
\udl{\hom}^{}\big(\udl{V},\udl{W}\big) := \bigoplus_{n \in \bbZ} \, \udl{\hom}\big(\udl{V},\udl{W}\big)_{n}
:=\bigoplus_{n\in\bbZ} \  \bigoplus_{l-m=n} \, \Hom_k\big(\udl{V}_{\,m},\udl{W}_{\,l}\big) ~,
\end{flalign}
where $\Hom_k (\udl{V}_{\,m},\udl{W}_{\,l} )$ denotes the $k$-module of $k$-linear 
maps between the homogeneous components $ \udl{V}_{\,m}$ and $\udl{W}_{\,l} $.
To any $ \MMM^\op \times \MMM^{} $-morphism 
$ (f^\op:\udl{V} \to \udl{V}^\prime, g:\udl{W} \to \udl{W}^\prime\,) $ the internal $\hom$-functor assigns the
$ \MMM^{} $-morphism
\begin{flalign}\label{eqn:internalhommorph}
\udl{\hom}^{}(f^\op,g): \udl{\hom}^{}\big(\udl{V},\udl{W}\big) \longrightarrow 
\udl{\hom}^{}\big(\udl{V}^\prime,\udl{W}^\prime\, \big)~, \qquad L \longmapsto g \circ L \circ f~.
\end{flalign}
The natural currying isomorphism
\begin{flalign}
\udl{\zeta} : \Hom_{\MMM}^{}(\text{--}\otimes\text{--},\text{--}) \Longrightarrow 
\Hom_{\MMM}^{}(\text{--},\udl{\hom}^{}(\text{--},\text{--}))
\end{flalign}
has components given by
\begin{flalign}
\udl{\zeta}(f) : \udl{V}\longrightarrow \udl{\hom}^{}(\udl{W},\udl{X})~, \qquad v\longmapsto f\big(v\otimes (\,\cdot\,)\big)~,
\end{flalign}
for all $\MMM$-morphisms $f : \udl{V}\otimes \udl{W}\to \udl{X}$. The natural inverse currying isomorphism has
components given by
\begin{flalign}
\udl{\zeta}^{-1}(g) : \udl{V}\otimes \udl{W}\longrightarrow \udl{X}~,\qquad v\otimes w\longmapsto g(v)(w)~,
\end{flalign}
for all $\MMM$-morphisms $g : \udl{V}\to \udl{\hom}(\udl{W},\udl{X})$.


\subsection{$\bbZ$-graded quasi-Hopf representation categories}
Let $H$ be a quasitriangular quasi-Hopf algebra (see e.g.\
\cite{Drinfeld,Barnes:2014} for definitions), which we regard as being $\bbZ$-graded and sitting in degree $0$.
A left $H$-action on an object $\udl{V}$ in $\MMM$ is an $\MMM^{}$-morphism
\begin{flalign}
\ra : H \otimes^{} \udl{V} \longrightarrow \udl{V} ~,
\end{flalign}
such that
\begin{flalign}
1 \ra v = v \qquad \mbox{and} \qquad (h \,h^\prime\,) \ra v = h \ra (h^\prime \ra v)~,
\end{flalign}
for all $v \in \udl{V}$ and $h, h^\prime \in H$. 
The bounded $\bbZ$-graded representation category $^{H}\MMM^{}$ of $H$ 
is defined as follows: The objects in $^{H}\MMM^{}$ are the
bounded $\bbZ$-graded left $H$-modules, i.e.\ the pairs $V = (\udl{V}, \ra)$ consisting of an object $\udl{V}$ in $\MMM$ 
and a left $H$-action $\ra : H\otimes \udl{V}\to\udl{V}$
on $\udl{V}$. The morphisms in ${}^H\MMM$ are the $H$-equivariant $\MMM^{}$-morphisms $f: V\to W$, i.e.\
\begin{flalign}\label{eqn:Hequivariance}
f(h\ra v) = h\ra f(v)~,
\end{flalign}
for all $h\in H$ and $v\in \udl{V}$.
\sk

The category ${}^H\MMM$ is monoidal with monoidal functor
$\otimes^{}: {}^H_{}\MMM^{} \times {}^H_{}\MMM^{} \rightarrow {}^H_{}\MMM^{}$
(denoted by the same symbol as that of $\MMM$): For any two objects $V$,$W$ in ${}^H\MMM$ we define $V\otimes W$ as the 
$\bbZ$-graded tensor product of the underlying $\bbZ$-graded $k$-modules (\ref{eqn:tensorgraded})
together with the left $H$-action
\begin{flalign}
\ra: H \otimes \udl{V} \otimes \udl{W} \longrightarrow  \udl{V} \otimes \udl{W} ~, 
\qquad h \otimes v \otimes w \longmapsto (h_{(1)} \ra v) \otimes (h_{(2)} \ra w)~,
\end{flalign}
where we have used the Sweedler notation $\Delta(h) = h_{(1)}\otimes h_{(2)}$ (with summation understood)
for the coproduct of $H$.
To any ${}^H\MMM^{} \times {}^{H}\MMM^{}$-morphism $(f: V \to V^\prime,~ g: W\to W^\prime\,)$ 
the monoidal functor assigns the ${}^H\MMM^{}$-morphism induced by (\ref{eqn:tensormorphgraded}).
The unit object in ${}^H\MMM$ is $I := (k,\ra)$ with trivial left $H$-action
$\ra : H\otimes k\to k\,,~h\otimes c \mapsto \epsilon(h)\,c$ given by the counit of $H$.
The associator in ${}^H_{}\MMM^{}$ is the natural isomorphism  
\begin{flalign}
\Phi^{}: \otimes^{} \circ \big(\otimes^{}\times \id^{}_{{}^H\MMM^{}}\big) 
\Longrightarrow \otimes^{}\circ \big(\id^{}_{{}^{H}\MMM^{}}\times \otimes^{}\big)~,
\end{flalign}
whose components are given in terms of the 
associator $\phi = \phi^{(1)}\otimes\phi^{(2)}\otimes\phi^{(3)}\in H\otimes H\otimes H$ of $H$ by
 \begin{flalign}\label{eqn:associator}
 \nn \Phi^{} : (\bol{V}\otimes\bol{W})\otimes\bol{X}  &\longrightarrow \bol{V}\otimes(\bol{W}\otimes\bol{X})~,\\
  (v\otimes w) \otimes x& \longmapsto (\phi^{(1)}\ra v)\otimes \big((\phi^{(2)}\ra w)\otimes (\phi^{(3)}\ra x)\big)~.
 \end{flalign}
The unitors in ${}^H\MMM^{}$ are the natural isomorphisms 
\begin{flalign}
\lambda: I\otimes^{} \text{--} \Longrightarrow \id_{{}^H\MMM^{}} \qquad \mbox{and} \qquad
\rho : \text{--}\otimes^{} I \Longrightarrow \id_{{}^H\MMM^{}}~,
\end{flalign}
whose components are 
$\lambda : I\otimes^{} V\to V \,,~ c \otimes^{} v\mapsto c \,v$ and 
$\rho: V\otimes^{} I \to V\,,~v\otimes^{} c \mapsto c\,v$. 
\sk

The monoidal category ${}^H\MMM$ is also braided.
The braiding is the natural isomorphism $\tau^{}: \otimes^{} \Rightarrow \otimes^\op$
with components given in terms of the universal $R$-matrix $R = R^{(1)}\otimes R^{(2)}\in H\otimes H$ of $H$ 
 by
\begin{flalign}
\tau: V \otimes^{} W \longrightarrow W \otimes^{} V ~, \qquad v \otimes^{} w 
\longmapsto (-1)^{\vert v \vert\, \vert w \vert} \, \big(R^{(2)} \ra w\big) \otimes^{} \big(R^{(1)} \ra v\big)~,
\end{flalign}
for all homogeneous $v \in V $ and $w \in W$.
\sk

Finally, ${}^H\MMM$ is a braided closed monoidal category with
internal hom-functor
\begin{flalign}
\hom^{}: \big({}^H_{}\MMM^{}\big)^\op \times {}^H_{}\MMM^{} \longrightarrow {}^H_{}\MMM^{}~.
\end{flalign}
For any object $(V,W)$ in $({}^H_{}\MMM^{})^\op\times {}^H\MMM$
the internal hom-object $\hom(V,W) := (\udl{\hom}(\udl{V},\udl{W}),\ra)$ 
in ${}^H_{}\MMM^{}$ is given by the $\bbZ$-graded $k$-module
(\ref{eqn:internalhomgraded}) equipped with the left adjoint $H$-action
\begin{flalign}
\nn \ra: H \otimes^{} \udl{\hom}^{}\big(\udl{V}, \udl{W}\big) &\longrightarrow
 \udl{\hom}^{}\big(\udl{V}, \udl{W}\big)~, \\
h \otimes L &\longmapsto \big(h_{(1)} \ra \,\cdot\,\big) \circ \,L \,\circ\, \big(S(h_{(2)}) \ra \,\cdot\,\big)~,
\end{flalign}
where $S$ is part of the quasi-antipode $(S,\alpha,\beta)$ of $H$.
To any $ ({}^H_{}\MMM^{})^\op \times {}^H_{}\MMM^{} $-morphism 
$ (f^\op:V \to V^\prime, g:W \to W^\prime\,) $ the internal hom-functor
assigns the $ {}^H_{}\MMM^{} $-morphism induced by (\ref{eqn:internalhommorph}).
The natural currying isomorphism
\begin{flalign}
\zeta: \Hom^{}_{{}^H\MMM}(\text{--}\otimes\text{--},\text{--}) \Longrightarrow 
\Hom_{{}^H\MMM}^{}(\text{--},\hom^{}(\text{--},\text{--}))
\end{flalign} 
has components given by
\begin{flalign}\label{eqn:curryingmap}
\nn \zeta^{}(f) : \bol{V} &\longrightarrow \hom^{}(\bol{W},\bol{X})~,\\
v&\longmapsto 
f\Big(\big(\phi^{(-1)}\ra v\big)\otimes^{} \big(\big(\phi^{(-2)}\,\beta\,S(\phi^{(-3)})\big)\ra(\,\cdot\,)\big)\Big)~,
\end{flalign}
for all ${}^H_{}\MMM^{}$-morphisms $f : \bol{V}\otimes \bol{W} \to \bol{X}$, where
$\phi^{-1} = \phi^{(-1)}\otimes \phi^{(-2)}\otimes \phi^{(-3)}\in H\otimes H\otimes H$  denotes
the inverse associator of $H$.
The natural inverse currying isomorphism has components given by
\begin{flalign}\label{eqn:inversecurrying}
\nn \zeta^{-1}(g) : \bol{V}\otimes^{} \bol{W} & \longrightarrow \bol{X}~,\\
 v\otimes^{} w  & \longmapsto \phi^{(1)}\ra \Big(g(v)\big(\big(S(\phi^{(2)})\,\alpha\,\phi^{(3)}\big)\ra w\big)\Big)~,
\end{flalign}
for all ${}^H_{}\MMM^{}$-morphisms $g : \bol{V}\to\hom(\bol{W},\bol{X})$.


\subsection{Internal evaluation, composition and tensor product}
In any closed monoidal category there are evaluation and composition morphisms
for its internal homomorphisms. If the category is in addition braided then 
there are also tensor product morphisms for its internal homomorphisms. These morphisms
are derived using the currying bijection, see e.g.\ \cite[Proposition~9.3.13]{Majidbook} 
and~\cite{Barnes:2014}, and they induce important 
structures on the internal homomorphisms that make them look like morphisms.
\sk

We shall denote the internal evaluation ${}^H\MMM$-morphisms
by
\begin{flalign}\label{eqn:evaluationgeneral}
\ev : \hom^{}\big(V,W\big) \otimes^{} V &\longrightarrow W~,
\end{flalign}
the internal composition  ${}^H\MMM$-morphisms by
\begin{flalign}\label{eqn:compositiongeneral}
\bullet : \hom^{}\big(W,X\big) \otimes^{} \hom^{}\big(V,W\big) \longrightarrow \hom^{}\big(V,X\big)~,
\end{flalign}
and the internal tensor product ${}^H\MMM$-morphisms by
\begin{flalign}\label{eqn:tensorprod}
\obultimes :
\hom^{}\big(V,W\big) \otimes^{} \hom^{}\big(X,Y\big) \longrightarrow \hom^{}\big(V \otimes^{} X,W \otimes^{} Y\big)~.
\end{flalign}
For the explicit forms of these morphisms in terms of the currying map see e.g.\ 
\cite[Propositions 2.11 and 5.5]{Barnes:2014}.
The next three lemmas summarize important properties of these morphisms, which we shall frequently 
use throughout this paper.
\begin{lem}\label{lem:compproperties}
Let $V,W,X,Y$ be any four objects in ${}^H\MMM$. Then
\begin{subequations}
\begin{flalign}\label{eqn:evcompcompatibility}
\ev \big(\big(L^\prime\bullet L \big)\otimes v\big) &= \ev\Big(\big(\phi^{(1)}\ra L^\prime\, \big)\otimes \ev\Big(\big(\phi^{(2)}\ra 
L\big) \otimes \big(\phi^{(3)}\ra v\big)\Big)\Big)~, \\[4pt] \label{eqn:compweakassociativity}
\big(L^{\prime\prime}\bullet L^\prime\, \big) \bullet L
 &= (\phi^{(1)}\ra L^{\prime\prime}\, )\bullet
 \big((\phi^{(2)}\ra L^\prime\, ) \bullet
 (\phi^{(3)}\ra L)\big)~,
\end{flalign}
\end{subequations}
for all $L\in\hom(\bol{V},\bol{W})$, 
$L^\prime\in \hom(\bol{W},\bol{X})$, $L^{\prime\prime}\in\hom(\bol{X},\bol{Y})$ and $v\in V$.
\end{lem}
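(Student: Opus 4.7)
The plan is to derive both identities from the defining property of the internal composition $\bullet$, together with the pentagon ($3$-cocycle) identity for the quasi-Hopf associator $\phi$. By \cite[Proposition~5.5]{Barnes:2014}, $\bullet$ is the unique ${}^H\MMM$-morphism such that $\ev\circ(\bullet\otimes\id_V) = \ev\circ(\id\otimes\ev)\circ\Phi$ as morphisms $(\hom(W,X)\otimes\hom(V,W))\otimes V\to X$.

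Equation \eqref{eqn:evcompcompatibility} is then obtained directly by evaluating this characterizing identity on the element $(L'\otimes L)\otimes v$ and writing the associator $\Phi$ explicitly in terms of $\phi$ via \eqref{eqn:associator}. No further work is needed.

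For \eqref{eqn:compweakassociativity}, the plan is to pair both sides with an arbitrary $v\in V$ via $\ev$ and then apply \eqref{eqn:evcompcompatibility} twice on each side. Expanding the left-hand side by \eqref{eqn:evcompcompatibility} on the outer $\bullet$, distributing the resulting $\phi^{(1)}$-action over $L''\bullet L'$ through the $H$-equivariance $h\ra(L''\bullet L')=(h_{(1)}\ra L'')\bullet(h_{(2)}\ra L')$, and applying \eqref{eqn:evcompcompatibility} once more to the remaining inner $\bullet$, produces a triply nested evaluation in which the element $(\id\otimes\id\otimes\Delta)(\phi)\cdot(\Delta\otimes\id\otimes\id)(\phi)\in H^{\otimes 4}$ acts on $L''\otimes L'\otimes L\otimes v$. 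An analogous two-step expansion of the right-hand side (outer $\bullet$ first, then passing the resulting $\phi^{(2)}$-action through the surviving composition by $H$-equivariance, and finally expanding the inner $\bullet$) yields the action of $(1\otimes\phi)\cdot(\id\otimes\Delta\otimes\id)(\phi)\cdot(\phi\otimes 1)\in H^{\otimes 4}$ on the same tensor. These two $H^{\otimes 4}$-elements coincide precisely by the pentagon identity for $\phi$, one of the defining axioms of a quasi-Hopf algebra. Finally, since equality $\ev(X\otimes v)=\ev(X'\otimes v)$ for all $v\in V$ forces $X=X'$ in $\hom(V,Y)$ (by bijectivity of the currying isomorphism $\zeta$), this yields \eqref{eqn:compweakassociativity}.

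The main obstacle is the combinatorial bookkeeping in the unravelling: one must track carefully how the two insertions of $\phi$ reshape under the coproduct $\Delta$ as they are carried through nested $\ev$'s via $H$-equivariance, and match the resulting $H^{\otimes 4}$-elements on the two sides against the four $\phi$-insertions appearing in the pentagon identity. Everything else reduces to routine use of $H$-equivariance and multiplicativity of the $H$-action.
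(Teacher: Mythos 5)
Your proof is correct and takes essentially the same approach as the paper, which simply defers to the corresponding result in Part~I (Proposition~2.12 of \cite{Barnes:2014}): there too, \eqref{eqn:evcompcompatibility} is the uncurried form of the definition of $\bullet$, and \eqref{eqn:compweakassociativity} is obtained by pairing with $v\in V$, pushing the $\phi$-insertions through the nested evaluations via $H$-equivariance, and matching the two resulting elements of $H^{\otimes 4}$ by the pentagon (3-cocycle) identity. Your final appeal to injectivity of $L\mapsto \ev(L\otimes(\,\cdot\,))$, justified by the currying bijection (cf.\ Lemma~\ref{lem:evcurry}~(i)), is exactly the standard closing step.
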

\begin{proof}
The proof follows the same steps as in~\cite[Proposition~2.12]{Barnes:2014}.
\end{proof}
\begin{lem}\label{lem:tensorproductproperties}
Let $V,W,X,Y$ be any four objects in ${}^H\MMM$. Then
\begin{subequations}\label{eqn:compobultimesprop}
\begin{flalign}
\label{eqn:obultimesintermsofcirc}
L\obultimes L^\prime &= \big(L\obultimes 1_{Y} \big)
\bullet \big(1_V \obultimes L^\prime\, \big)~, \\[4pt]
\label{eqn:compobultimesprop1}
 \big(K\bullet L\big)\obultimes 1_Y &= \big(K\obultimes 1_Y \big)\bullet \big(L\obultimes 1_Y \big) ~,\\[4pt]
\label{eqn:compobultimesprop2}
 1_Y \obultimes \big(K\bullet L\, \big) &= \big(1_Y \obultimes K\, \big)\bullet \big(1_Y \obultimes L\, \big)~,\\[4pt]
\label{eqn:compobultimesprop3}
(-1)^{\vert L \vert\, \vert L^\prime \vert }\, \big(R^{(2)}\ra L\big)\obultimes \big(R^{(1)}\ra L^\prime\, \big) &= \big( 1_W \obultimes L^\prime\, \big) \bullet \big(L\obultimes 1_X \big)~,
\end{flalign}
\end{subequations}
for all $L\in \hom(V,W)$, $K \in \hom(W,X)$ 
and $L^\prime \in \hom(X,Y)$, where $1_V := (\beta\ra \,\cdot\,) \in \hom(V,V)$, for all objects $V$ in ${}^H\MMM$,
are the unit internal homomorphisms.
\end{lem}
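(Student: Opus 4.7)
The plan is to verify each identity in (\ref{eqn:compobultimesprop}) by exploiting the universal property of the internal hom: two ${}^H\MMM$-morphisms into an internal hom-object coincide if and only if their images under $\ev$ after tensoring with the source coincide. Applying this criterion, each of the four identities reduces to an equation of morphisms that can be checked by unfolding the defining relations for $\ev$, $\bullet$ and $\obultimes$ recorded in~\cite[Propositions~2.11 and 5.5]{Barnes:2014}. The principal ingredients are Lemma~\ref{lem:compproperties} together with the identity
\begin{flalign*}
\ev\big((L\obultimes L^\prime\, )\otimes (v\otimes x)\big) \;=\; (-1)^{\vert L^\prime\vert\, \vert v\vert}\, \ev(L\otimes v)\otimes \ev(L^\prime\otimes x)~,
\end{flalign*}
valid up to appropriate associator and quasi-antipode insertions. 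All four statements are the graded analogues of those of~\cite[Proposition~5.5]{Barnes:2014}, and the structural part of each proof carries over unchanged; the $\bbZ$-grading enters only through Koszul signs supplied by the graded braiding~$\tau$.

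For (\ref{eqn:obultimesintermsofcirc}) I would evaluate both sides on a decomposable element $v\otimes x\in V\otimes X$. Expanding the right-hand side via (\ref{eqn:evcompcompatibility}) and using that $\ev$ distributes over $\obultimes$-factors containing a unit internal homomorphism (so that, for example, $\ev((L\obultimes 1_Y)\otimes(v\otimes y))$ collapses to $\ev(L\otimes v)\otimes y$ modulo $\beta$-insertions), the pentagon identity for $\phi$ together with the quasi-antipode axioms re-shuffles the remaining associators to reproduce the left-hand side.

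Equations (\ref{eqn:compobultimesprop1}) and (\ref{eqn:compobultimesprop2}) are handled by an identical strategy: applying $\ev$, expanding by (\ref{eqn:evcompcompatibility}), using the $\ev$-$\obultimes$ relation on each nested evaluation, and then invoking the pentagon for $\phi$ to collect associators. The outputs coincide because composition in $k$-modules is strictly associative, so the two expansions differ only by associator reshufflings, which cancel thanks to the $3$-cocycle condition on $\phi$ and the unit property of $1_V=(\beta\ra\,\cdot\,)$ with respect to $\bullet$.

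The main obstacle is (\ref{eqn:compobultimesprop3}), the braided interchange law, which is the one place the graded $R$-matrix braiding is genuinely invoked. The braiding on $\hom(V,W)\otimes\hom(X,Y)$ applied to $L\otimes L^\prime$ produces precisely the insertions $(R^{(2)}\ra L)\otimes(R^{(1)}\ra L^\prime\, )$ together with the Koszul sign $(-1)^{\vert L\vert\,\vert L^\prime\vert}$, while on the right an application of (\ref{eqn:evcompcompatibility}) yields a nested evaluation $\ev(L^\prime\otimes(\,\cdot\,))\circ \ev(L\otimes(\,\cdot\,))$ on $v\otimes x$. The two expressions are then matched using the hexagon identities for the universal $R$-matrix, which exactly reorder the evaluations of $L$ and $L^\prime$ on the respective tensor factors. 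The most delicate bookkeeping involves the simultaneous appearance of $\phi$, $\phi^{-1}$, $\alpha$ and $\beta$, but the quasi-Hopf axioms guarantee that these factors telescope, just as in the ungraded proof.
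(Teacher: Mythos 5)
Your proposal is correct and takes essentially the same route as the paper, whose proof simply defers to the ungraded computations of \cite[Lemmas 5.6 and 5.7]{Barnes:2014}: reduce each identity to an evaluation identity via the currying bijection, unfold $\bullet$ and $\obultimes$ using Lemma~\ref{lem:compproperties} and the explicit formulas, and let the pentagon/hexagon and quasi-antipode axioms absorb the $\phi$-, $R$-, $\alpha$- and $\beta$-insertions, with the $\bbZ$-grading entering only through Koszul signs. Nothing further is needed.
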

\begin{proof}
The proof follows the same steps as in~\cite[Lemmas 5.6 and 5.7]{Barnes:2014}.
\end{proof}
\begin{lem}\label{lem:obultimesphi}
Let $U,V,W,X,Y,Z$ be any six objects in ${}^H\MMM$. Then
\begin{subequations}
\begin{flalign}
\Phi\circ \big( (L\obultimes 1_W)\obultimes 1_Y\big)\circ \Phi^{-1} &=
L\obultimes\big(1_W\obultimes 1_Y\big)~,\\[4pt] 
\Phi\circ \big( (1_U\obultimes L^\prime\, )\obultimes 1_Y\big)\circ
\Phi^{-1} &= 1_U\obultimes \big(L^\prime\obultimes 1_Y\big)~,\\[4pt]
\Phi\circ \big( (1_U\obultimes 1_W)\obultimes L^{\prime\prime}\,
\big)\circ \Phi^{-1} &= 1_U\obultimes\big(1_W\obultimes L^{\prime\prime}\,
\big)~,
\end{flalign}
\end{subequations}
for all $L\in \hom(U,V)$, $L^\prime \in \hom(W,X)$ 
and $L^{\prime\prime} \in \hom(Y,Z)$.
\end{lem}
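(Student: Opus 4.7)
The plan is to verify each of the three identities by reducing them, via the currying bijection and the evaluation morphism, to equalities of ${}^H\MMM$-morphisms that follow from naturality of $\Phi$ and the pentagon axiom for the associator $\phi$ of $H$.

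First, I would use the explicit formula for the internal tensor product $\obultimes$ (from \cite[Proposition~5.5]{Barnes:2014}) together with the decomposition $L \obultimes L^\prime = (L \obultimes 1)\bullet (1 \obultimes L^\prime)$ of Lemma~\ref{lem:tensorproductproperties}\eqref{eqn:obultimesintermsofcirc} to unpack the three double tensor products $(L \obultimes 1_W)\obultimes 1_Y$, $(1_U \obultimes L^\prime)\obultimes 1_Y$ and $(1_U \obultimes 1_W)\obultimes L^{\prime\prime}$. Conjugation by $\Phi$ moves the resulting expressions from the internal hom-object over $(U\otimes W)\otimes Y$ to the internal hom-object over $U \otimes (W\otimes Y)$.

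Second, I would apply the internal evaluation $\ev$ to both sides on an arbitrary homogeneous element and use Lemma~\ref{lem:compproperties}\eqref{eqn:evcompcompatibility} to push the inner $\bullet$'s out through $\ev$. The unit internal homomorphisms $1_V = (\beta\ra\,\cdot\,)$ and the explicit form of $\Phi$ in \eqref{eqn:associator} then reduce each identity to a statement about a product of several $\phi^{(\pm)}$-factors acting on an element of the relevant fourfold tensor product. Equality of these products in $H^{\otimes n}$ follows from the pentagon identity for $\phi$,
\begin{equation*}
(\id \otimes \id \otimes \Delta)(\phi) \cdot (\Delta \otimes \id \otimes \id)(\phi) = (1 \otimes \phi) \cdot (\id \otimes \Delta \otimes \id)(\phi) \cdot (\phi \otimes 1)~,
\end{equation*}
together with the quasi-antipode axioms relating $\phi$, $\alpha$, $\beta$ and $S$ that control how the $\beta$-insertion inside $1_V$ interacts with the flanking $\phi$-insertions.

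The main obstacle I expect is purely bookkeeping: each application of $\obultimes$ and each associator $\Phi$ or $\Phi^{-1}$ introduces several factors of $\phi^{(\pm)}$, and the key step is to organize all of them into precisely the configuration where the pentagon identity applies. The three cases differ only in which tensor slot carries the nontrivial internal homomorphism, so once the first identity is established, the second and third follow by the same strategy with analogous regroupings of $\phi$-factors.
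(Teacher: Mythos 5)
Your proposal is correct and follows essentially the same route as the paper, which simply defers to the proof of Proposition~5.9 in Part~I: one reduces each identity, via the currying bijection and (di)naturality of $\ev$, to an equality of $\phi^{(\pm)}$-factors acting on a fourfold tensor product, which is settled by the pentagon (3-cocycle) identity for $\phi$ together with the quasi-antipode relations controlling the $\beta$-insertions in the units $1_V=(\beta\ra\,\cdot\,)$. The only superfluous ingredient is the decomposition $L\obultimes L^\prime = (L\obultimes 1)\bullet(1\obultimes L^\prime\,)$, which is not needed here since each expression already contains a single nontrivial factor; the explicit evaluation formula for $(\,\cdot\,)\obultimes 1$ and $1\obultimes(\,\cdot\,)$ suffices.
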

\begin{proof}
The proof follows the same steps as in~\cite[Proposition~5.9]{Barnes:2014}.
\end{proof}
\begin{rem}
To simplify notation, in what follows we shall drop  
the labels on the unit internal homomorphisms and simply write
$1 := (\beta\ra\,\cdot\,)\in\hom(V,V)$, for any object $V$ in ${}^H\MMM$.
\end{rem}

Finally, we prove a technical lemma that will be useful 
for our analysis throughout this paper.
\begin{lem}\label{lem:evcurry}
Let $V,W,X$ be any three objects in ${}^H_{}\MMM$.
\begin{itemize}
\item[(i)]  For any ${}^H_{}\MMM$-morphism $g: V\to \hom(W,X)$ there is an identity
\begin{flalign}
\zeta^{-1}(g) = \ev \circ \big(g\otimes \id \big) : V\otimes W\longrightarrow X~.
\end{flalign}
\item[(ii)] Let $f: V\otimes W \to X$ be any ${}^H_{}\MMM$-morphism.
Then $\zeta(f) \circ j =0$ if and only if $f \circ (j\otimes \id) =0$, for all ${}^H_{}\MMM$-morphisms $j : U\to V$.
\end{itemize}
\end{lem}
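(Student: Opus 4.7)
The strategy is to deduce both parts from the naturality of the currying isomorphism $\zeta$ in its first argument, using only the fact that $\ev$ is by definition the image of the identity under inverse currying, $\ev = \zeta^{-1}(1_{\hom(W,X)})$ in the sense that $\zeta(\ev) = \id_{\hom(W,X)}$. No explicit manipulation with $\phi$, $\alpha$, $\beta$ or $R$ should be needed; all the quasi-Hopf technicalities are already absorbed into the fact that (\ref{eqn:curryingmap})--(\ref{eqn:inversecurrying}) define a natural isomorphism of functors $\Hom^{}_{{}^H\MMM}(-\otimes -,-) \cong \Hom^{}_{{}^H\MMM}(-,\hom(-,-))$.

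For part (i), I would pre-compose with the ${}^H\MMM$-morphism $g : V \to \hom(W,X)$. Naturality of $\zeta$ in the first slot says that for any ${}^H\MMM$-morphism $f : \hom(W,X)\otimes W \to X$,
\begin{flalign*}
\zeta\big(f\circ (g\otimes\id_W)\big) \;=\; \zeta(f)\circ g~.
\end{flalign*}
Applying this identity with $f = \ev$ and recalling that $\zeta(\ev) = \id_{\hom(W,X)}$ yields $\zeta(\ev\circ (g\otimes \id)) = g$, i.e.\ $\ev\circ (g\otimes \id) = \zeta^{-1}(g)$, which is exactly the assertion of (i). As a sanity check, the same formula can be seen by comparing the right-hand side of (\ref{eqn:inversecurrying}) (with the ${}^H\MMM$-morphism $g$) against the explicit form of $\ev$ as $\zeta^{-1}(\id)$ evaluated on $g(v)\otimes w$; these coincide term by term.

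For part (ii), I would again invoke naturality of $\zeta$ in the first slot, now with the ${}^H\MMM$-morphism $j : U \to V$. It gives
\begin{flalign*}
\zeta(f)\circ j \;=\; \zeta\big(f\circ (j\otimes \id_W)\big)~,
\end{flalign*}
for every ${}^H_{}\MMM$-morphism $f : V\otimes W \to X$. Since $\zeta$ is a $k$-linear bijection (in particular $\zeta(0)=0$ and $\zeta^{-1}(0)=0$), vanishing of the left-hand side is equivalent to vanishing of $f\circ (j\otimes \id_W)$, which is the desired equivalence.

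The only point that genuinely requires care is the naturality of $\zeta$ in the presence of a nontrivial associator; however, this is exactly what is encoded in writing $\zeta$ as a natural transformation of functors, and it has already been established in the relevant references cited in the excerpt. Thus neither part presents a real obstacle, and the proof reduces to two one-line applications of naturality together with the defining property $\zeta(\ev) = \id$ of internal evaluation.
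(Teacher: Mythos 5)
Your proof is correct and rests on the same ingredient as the paper's, namely naturality of the currying bijection $\zeta$ (for item (i) the paper simply cites Part~I, where the identity $\zeta^{-1}(g)=\ev\circ(g\otimes\id)$ is obtained in exactly the way you describe, from $\zeta(\ev)=\id$ and naturality in the first slot). The only cosmetic difference is in item (ii): the paper proves the two implications separately, using item (i) together with evaluation for one direction and naturality for the other, whereas you obtain both directions at once from the single identity $\zeta(f)\circ j=\zeta\big(f\circ(j\otimes\id)\big)$ combined with bijectivity of $\zeta$ and $\zeta(0)=0$ --- an equally valid and slightly more economical argument.
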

\begin{proof}
The proof of item (i) is exactly as in~\cite[Proposition~2.12 (i)]{Barnes:2014}.
To prove item (ii), let us first suppose that $\zeta(f) \circ j =0$. Then
\begin{flalign}
0 = \ev \circ \big(\big(\zeta(f) \circ j \big)\otimes \id\big) = \ev\circ \big(\zeta(f) \otimes \id\big)\circ (j\otimes\id)
=f\circ (j\otimes\id)~,
\end{flalign}
where in the last equality we have used item (i).
Let us now assume that $f \circ (j\otimes \id) =0$. Then
\begin{flalign}
\nn 0 &= \zeta\big(f \circ (j\otimes \id) \big) \\[4pt] \nn &= \zeta\big(\Hom_{{}^H\MMM}^{}(j^\op\otimes \id^\op,\id)\big(f\big)\big)\\[4pt]
&= \Hom_{{}^H\MMM}^{}(j^\op,\hom(\id^\op,\id)) \big(\zeta(f)\big) = \zeta(f)\circ j~,
\end{flalign}
where in the third equality we have used naturality of the currying bijection, see also 
the proof of~\cite[Theorem~2.10]{Barnes:2014}.
\end{proof}


\subsection{Algebras and bimodules}
An algebra in the braided closed monoidal category 
${}^H\MMM$ is an object $A$ in ${}^H\MMM$ 
together with two ${}^H\MMM$-morphisms 
$\mu : A\otimes A\to A$ (product) and $\eta : I \to A$ (unit), such that (denoting the product by juxtaposition 
and the unit element by $1:= \eta(1)\in A$)
\begin{flalign}
(a\,a^\prime\, )\,a^{\prime\prime} = (\phi^{(1)}\ra
a)\,\big((\phi^{(2)} \ra a^\prime\,
)\,(\phi^{(3)}\ra a^{\prime\prime}\, )\big)~,
\end{flalign}
for all $a,a^\prime,a^{\prime\prime}\in\bol{A}$, and
\begin{flalign}
1 \,a = a = a\,1 ~, 
\end{flalign}
for all $a\in\bol{A}$. To simplify notation, we shall denote an algebra in ${}^H\MMM$ simply by
its underlying bounded $\bbZ$-graded left $H$-module $A$, suppressing the product and unit from the notation. 
We denote by ${}^H\AAA$ the category of all algebras in ${}^H\MMM$; the morphisms in ${}^H\AAA$
are given by all ${}^H\MMM$-morphisms $f: A\to B$ that preserve the products and units, i.e.\
$f(a\,a^\prime\,) = f(a)\,f(a^\prime\,)$, for all $a,a^\prime\in A$, and $f(1) =1$.
\begin{ex}\label{defi:internalendalgebra}
Let $V$ be any object in $^H{}\MMM$. Then the internal endomorphism object 
$\mathrm{end}(V) := \hom(V,V)$, together with the 
$^H{}\MMM$-morphisms $\bullet : \mathrm{end}(V)\otimes \mathrm{end}(V) 
\rightarrow \mathrm{end}(V)$ and $\eta : I \rightarrow \mathrm{end}(V)\,,~c\mapsto c\,(\beta\ra\,\cdot\,)$,
is an object in ${}^H\AAA$. We call this object the  algebra of internal endomorphisms of $V$.
\end{ex}

We say that an object $A$ in ${}^H\AAA$ is braided commutative if its product is compatible
with the braiding in ${}^H\MMM$, i.e.\ $\mu\circ \tau = \mu$ or
\begin{flalign}
a \, a^\prime = (-1)^{\vert a \vert \,\vert a^\prime \vert}\, \big(R^{(2)} \ra a^\prime\, \big) \, \big(R^{(1)} \ra a \big)~,
\end{flalign}
for all homogeneous $a, a^\prime \in \bol{A}$. The full subcategory
of braided commutative algebras in ${}^H\MMM$ is denoted by ${}^H\AAA^{\mathrm{com}}$.
\begin{ex}\label{ex:classicalGMan}
Classical examples of braided commutative algebras are given by function algebras $C^\infty(M)$
on $G$-manifolds $M$, where $G$ is a Lie group with Lie algebra $\mathfrak{g}$, see~\cite[Section~6]{Barnes:2014}.
The relevant quasitriangular quasi-Hopf algebra in this case is the universal enveloping Hopf algebra
$U\mathfrak{g}$ (with trivial $R$-matrix and associator). These examples are braided commutative algebras concentrated in $\bbZ$-degree $0$.
Similarly to~\cite[Section~6]{Barnes:2014}, one can show that the exterior algebras of differential forms
$\Omega^\sharp(M)$ on $G$-manifolds are braided commutative algebras according to our definition.
These algebras are now nontrivially $\bbZ$-graded and they satisfy our assumption of boundedness of
$\bbZ$-graded $k$-modules for all finite-dimensional manifolds. These examples and cochain twist
deformations thereof (cf.~\cite[Section~6]{Barnes:2014}) are our main examples of interest.
\end{ex}

Let $A$ be any object in ${}^H\AAA$. An $A$-bimodule in ${}^H\MMM$ 
is an object $V$ in ${}^H\MMM$ together with two ${}^H\MMM$-morphisms 
$l : A\otimes V\to V$  (left $A$-action) and $r : V\otimes A \to V$  (right $A$-action),
such that (denoting also the $A$-actions by juxtaposition)
\begin{subequations}
\begin{flalign}
(v\,a)\,a^\prime &= (\phi^{(1)}\ra
v)\,\big((\phi^{(2)}\ra a)\, (\phi^{(3)}\ra
a^\prime\, )\big)~,\\[4pt]
a\,(a^\prime\, v) &= \big((\phi^{(-1)}\ra a
)\,(\phi^{(-2)}\ra a^\prime\, )\big)\,(\phi^{(-3)}\ra v)~,\\[4pt]
a\,(v\,a^\prime\, ) &=\big((\phi^{(-1)}\ra a
)\,(\phi^{(-2)}\ra v)\big)\,(\phi^{(-3)}\ra
a^\prime\, )~,
\end{flalign}
\end{subequations}
for all $a,a^\prime\in\bol{A}$ and $v\in\bol{V}$, and
\begin{flalign}
1 \,v = v = v\,1~,
\end{flalign}
for all $v\in\bol{V}$. To simplify notation, we shall denote an $A$-bimodule in ${}^H\MMM$ simply by
its underlying bounded $\bbZ$-graded left $H$-module $V$, suppressing the $A$-actions from the notation. 
We denote by ${}^H_{}{}^{}_{\bol{A}}\MMM^{}_{\bol{A}}$ the category of 
all $A$-bimodules in ${}^H\MMM$; the morphisms in ${}^H_{}{}^{}_{\bol{A}}\MMM^{}_{\bol{A}}$
are given by all ${}^H\MMM$-morphisms $f: V\to W$ that preserve the $A$-actions, i.e.\
$f(v\,a) = f(v)\,a$ and $f(a\,v) = a\,f(v)$, for all $v\in V$ and $a\in A$.
If $A$ is an object in ${}^H\AAA^{\mathrm{com}}$, we may demand that
the left and right $A$-actions in an $A$-bimodule $V$ are compatible with the braiding in
${}^H\MMM$, i.e.\ $r\circ \tau = l$ and $l\circ \tau =r$, or
\begin{subequations}\label{eqn:wcbimod}
\begin{flalign}
\label{eqn:wcbimodl}a\,v &= (-1)^{\vert a \vert\, \vert v \vert}\, \big(R^{(2)}\ra v\big) \, \big(R^{(1)}\ra a\big)~,\\[4pt]
\label{eqn:wcbimodr} v\,a &= (-1)^{\vert a \vert\, \vert v \vert}\, \big(R^{(2)}\ra a\big)\,\big(R^{(1)}\ra v\big)~,
\end{flalign}
\end{subequations}
for all homogeneous $a\in A$ and $v\in V$. We shall call such $A$-bimodules symmetric
and denote the full subcategory of all symmetric $A$-bimodules in ${}^H\MMM$
by ${}^H_{}{}^{}_{\bol{A}}\MMM^{\mathrm{sym}}_{\bol{A}}$.
\begin{ex}\label{ex:Gequibundles}
Following~\cite[Section~6]{Barnes:2014} and Example \ref{ex:classicalGMan}, the $C^\infty(M)$-bimodules 
of sections $\Gamma^\infty(E)$ of $G$-equivariant vector bundles $E\to M$ 
over $G$-manifolds $M$ are symmetric $C^\infty(M)$-bimodules concentrated in $\bbZ$-degree $0$.
Similarly, the $\Omega^\sharp(M)$-bimodules of $E$-valued differential forms $\Omega^\sharp(M,E)$
are symmetric $\Omega^\sharp(M)$-bimodules that are now nontrivially $\bbZ$-graded.
These examples and cochain twist
deformations thereof (cf.~\cite[Section~6]{Barnes:2014}) are our main examples of interest.
\end{ex}

The category ${}^H_{}{}^{}_{\bol{A}}\MMM^{\mathrm{sym}}_{\bol{A}}$ is
a braided monoidal category. The monoidal functor is denoted
$\otimes_A^{} : {}^H_{}{}^{}_{\bol{A}}\MMM^{\mathrm{sym}}_{\bol{A}}\times 
{}^H_{}{}^{}_{\bol{A}}\MMM^{\mathrm{sym}}_{\bol{A}}\to {}^H_{}{}^{}_{\bol{A}}\MMM^{\mathrm{sym}}_{\bol{A}}$ and it
assigns to any two objects $V,W$ in ${}^H_{}{}^{}_{\bol{A}}\MMM^{\mathrm{sym}}_{\bol{A}}$ the object
\begin{flalign}\label{eqn:otimesA}
\bol{V}\otimes_{\bol{A}}^{}\bol{W} = \frac{\bol{V}\otimes\bol{W}}{\mathrm{Im}\big(r \otimes \id -
(\id \otimes l)\circ \Phi\big)}
\end{flalign}
in ${}^H\MMM$, together with left and right $A$-actions given by the ${}^H\MMM$-morphisms
\begin{subequations}
\begin{flalign}
\nn l : \bol{A}\otimes\big(\bol{V}\otimes_A \bol{W}\big)
&\longrightarrow\bol{V}\otimes_A \bol{W}~, \\
 a \otimes \big(v\otimes_A w\big) &\longmapsto a\,(v\otimes_A w) := 
 \big((\phi^{(-1)}\ra a)\,(\phi^{(-2)}\ra v)\big)\otimes_A (\phi^{(-3)}\ra w)~,
\end{flalign}
and
\begin{flalign}
\nn r : \big(\bol{V}\otimes_A \bol{W}\big)\otimes\bol{A}
&\longrightarrow \bol{V}\otimes_A \bol{W}~, \\
 \big(v\otimes_A w \big)\otimes a &\longmapsto  (v\otimes_A w)\,a := 
 (\phi^{(1)}\ra v)\otimes_A \big((\phi^{(2)}\ra w) \,(\phi^{(3)}\ra a)\big)~.
\end{flalign}
\end{subequations}
To any ${}^H_{}{}^{}_{\bol{A}}\MMM^{\mathrm{sym}}_{\bol{A}}\times 
{}^H_{}{}^{}_{\bol{A}}\MMM^{\mathrm{sym}}_{\bol{A}}$-morphism 
$\big(f:\bol{V}\to\bol{X} ,g:\bol{W}\to\bol{Y}\big)$ the monoidal functor assigns
the ${}^H_{}{}^{}_{\bol{A}}\MMM^{\mathrm{sym}}_{\bol{A}}$-morphism 
\begin{flalign}
f\otimes_{\bol{A}}^{} g : \bol{V}\otimes_{\bol{A}}^{}\bol{W} \longrightarrow  \bol{X} \otimes_{\bol{A}}^{}\bol{Y}
~, \qquad v\otimes_A^{}w \longmapsto f(v)\otimes_{\bol{A}}^{} g(w)~.
\end{flalign}
The unit object in ${}^H_{}{}^{}_{\bol{A}}\MMM^{\mathrm{sym}}_{\bol{A}}$ is $A$ itself
with left and right $A$-actions given by the product in $A$. 
The unitors  in ${}^H_{}{}^{}_{\bol{A}}\MMM^{\mathrm{sym}}_{\bol{A}}$ are the natural isomorphisms
$\rho^A: \text{--} \otimes_A A \Rightarrow \id_{{}^H_{}{}^{}_{\bol{A}}\MMM^{\mathrm{sym}}_{\bol{A}}}$ 
and $\lambda^A: A \otimes_A \text{--} \Rightarrow \id_{{}^H_{}{}^{}_{\bol{A}}\MMM^{\mathrm{sym}}_{\bol{A}}}$
with components given by $\lambda^{\bol{A}}: A \otimes_A V \rightarrow V\, ,~ 
a\otimes_A v \mapsto a\,v $ and $\rho^{\bol{A}}: V \otimes_A A \rightarrow V\,,~ v\otimes_{\bol{A}}^{} a \mapsto v\,a$.
The associator is the natural isomorphism 
$\Phi^A: \otimes_A \circ \big(\otimes_A \times \id_{{}^H_{}{}^{}_{\bol{A}}\MMM^{\mathrm{sym}}_{\bol{A}}}\big) 
\Rightarrow \otimes_A\circ \big(\id_{{}^H_{}{}^{}_{\bol{A}}\MMM^{\mathrm{sym}}_{\bol{A}}}\times \otimes_A \big)$
whose components are given by
\begin{flalign}
 \nn \Phi^A:  (V\otimes_A^{} W) \otimes_A^{} X &\longrightarrow V\otimes_A^{} (W\otimes_A^{} X)~,\\
(v \otimes_A w) \otimes_A x &\longmapsto  (\phi^{(1)}\ra v) 
 \otimes_A \big((\phi^{(2)}\ra w)\otimes_A (\phi^{(3)}\ra x)\big)~.
\end{flalign}
Finally, the braiding in ${}^H_{}{}^{}_{\bol{A}}\MMM^{\mathrm{sym}}_{\bol{A}}$ is the
natural isomorphism
$\tau^A : \otimes_A \Rightarrow \otimes_A^{\op}$ 
with components given by
\begin{flalign}
\tau^A: V \otimes_A W \longrightarrow W \otimes_A V ~, \qquad v 
\otimes_A w \longmapsto (-1)^{\vert v \vert \, \vert w \vert}\, \big(R^{(2)} \ra w\big) \otimes_A \big(R^{(1)} \ra v\big)~,
\end{flalign}
for all homogeneous  $v \in V$ and $w \in W$.
The braided monoidal category ${}^H_{}{}^{}_{\bol{A}}\MMM^{\mathrm{sym}}_{\bol{A}}$ is also closed; we shall give an explicit description of the internal hom-functor
in ${}^H_{}{}^{}_{\bol{A}}\MMM^{\mathrm{sym}}_{\bol{A}}$ in Section~\ref{subsec:internalhomHAMA}.


\section{\label{sec:derivations}Derivations and differential operators}
In the remainder of this paper we shall systematically build up notions of 
differential geometry internal to the bounded $\bbZ$-graded representation category $^H\MMM$ 
of a quasitriangular quasi-Hopf algebra $H$. In this section we shall address the notions of derivations, 
differential operators and differential calculi. We describe derivations and differential 
operators as subobjects of the internal endomorphisms in $^H\MMM$ by expressing the
algebraic properties which characterize them in terms of universal categorical constructions.
See~\cite{MacLane:1998} for an introduction to the notions of limits and colimits in a category that we use below.


\subsection{Internal commutators}
Recalling Example
 \ref{defi:internalendalgebra}, for any object $V$ in ${}^H\MMM$ there exists an algebra
 in ${}^H\MMM$ given by the internal endomorphisms $\mathrm{end}(V)$ with product
 the internal composition $\bullet$ and unit element $1 :=(\beta\ra\,\cdot\,) $.
\begin{defi}\label{def:commutator}
The internal commutator in the algebra of internal endomorphisms $\mathrm{end}(V)$
is the ${}^H\MMM$-morphism
\begin{flalign}\label{eqn:commutator}
[\, \cdot\, ,\, \cdot\, ]: \mathrm{end}(V) \otimes \mathrm{end}(V) \longrightarrow \mathrm{end}(V)~, \qquad
L\otimes L^\prime \longmapsto (\bullet - \bullet \circ \tau)(L \otimes L^\prime\, )~.
\end{flalign}
\end{defi}

\begin{propo}\label{prop:bracket}
The internal commutator in $\mathrm{end}(V)$ satisfies the following properties:
\begin{itemize}
\item[(i)] If $H$ is triangular, i.e.\ its $R$-matrix satisfies $R = R_{21}^{-1}$, then 
$ [\,\cdot\, ,\, \cdot\,] $ is braided antisymmetric, i.e.\
\begin{flalign}
[\, \cdot\, , \,\cdot\, ] = - [\,\cdot\,, \,\cdot\,] \circ \tau~,
\end{flalign}
or
\begin{flalign}
\big[L,L^\prime\, \big] = - (-1)^{\vert L\vert \,\vert L^\prime\vert}\, \big[R^{(2)}\ra L^\prime , R^{(1)}\ra L\big]~,
\end{flalign}
for all homogeneous $L,L^\prime\in \mathrm{end}(V)$.

\item[(ii)] If $H$ is triangular, then $ [\,\cdot\, ,\, \cdot\,] $ 
satisfies the braided Jacobi identity $ \mathrm{Jac} = 0 $, with Jacobiator given by the $ ^H\MMM $-morphism 
 $\mathrm{Jac}: (\mathrm{end}(V) \otimes \mathrm{end}(V)) \otimes \mathrm{end}(V) \longrightarrow \mathrm{end}(V)$
 defined as
\begin{flalign}\label{eqn:Jacobi}
\mathrm{Jac}:= [\,\cdot\, , \,\cdot\,] \circ \big([\,\cdot\,, \,\cdot\,] \otimes \id\big) 
\circ \big(((\id \otimes \id) \otimes \id) + (\tau \circ \Phi) + (\Phi^{-1} \circ \tau) \big)~,
\end{flalign}
or
\begin{flalign}
\nn 0=&\,\big[\big[L,L^\prime\, \big],L^{\prime\prime}\,\big] \\
\nn &+ (-1)^{\vert L\vert \,(\vert L^\prime\vert + \vert L^{\prime\prime}\vert)}\, 
\big[\big[R^{(2)}_{(1)}\, \phi^{(2)}\ra L^\prime , R^{(2)}_{(2)}\, \phi^{(3)}\ra L^{\prime\prime}\, \big], R^{(1)}\, \phi^{(1)}\ra L\big] 
\\ &+ (-1)^{\vert L^{\prime\prime}\vert \, (\vert L\vert + \vert L^\prime\vert)}\, 
 \big[\big[\phi^{(-1)}\, R^{(2)}\ra L^{\prime\prime} , \phi^{(-2)}\, R^{(1)}_{(1)}\ra L \big], 
\phi^{(-3)}\, R^{(1)}_{(2)}\ra L^{\prime}\, \big] ~,
\end{flalign}
for all homogeneous $L,L^\prime,L^{\prime\prime}\in \mathrm{end}(V)$.

\item[(iii)] For generic quasitriangular $H$,
$ [\, \cdot\, , \,\cdot\, ] $ satisfies the braided derivation property
\begin{flalign}\label{eqn:bideriv}
[\, \cdot\, ,\, \cdot\, ] \circ (\bullet \otimes \id) = 
\bullet \circ \Big(\big(\id \otimes [\, \cdot\, ,\, \cdot\, ]\big) + \big([\, \cdot\, ,\, \cdot\, ] \otimes \id\big) 
\circ \Phi^{-1} \circ \big(\id \otimes \tau\big) \Big)\circ \Phi~,
\end{flalign}
or
\begin{multline}
\big[L \bullet L^{\prime}, L^{\prime\prime}\, \big] = \big(\phi^{(1)} \ra L\big)\bullet\big[\phi^{(2)}\ra L^{\prime}, \phi^{(3)} \ra L^{\prime\prime}\, \big] \\
+ (-1)^{\vert L^{\prime} \vert \,\vert L^{\prime\prime} \vert}~
\big[\, \widetilde{\phi}^{(-1)}\, \phi^{(1)} \ra L , \widetilde{\phi}^{(-2)}\, R^{(2)} \, 
\phi^{(3)} \ra L^{\prime\prime}\big] \bullet \big(\widetilde{\phi}^{(-3)} \, R^{(1)} \, \phi^{(2)} \ra L^{\prime}\, \big)~,
\end{multline}
for all homogeneous $ L,L^{\prime},L^{\prime\prime} \in \mathrm{end}(V) $.
\end{itemize}
\end{propo}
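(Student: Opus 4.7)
The three statements are all purely algebraic consequences of the weak associativity of $\bullet$ (Lemma~\ref{lem:compproperties}(b)) and the (quasi)triangularity of the $R$-matrix, so the plan is to reduce everything to these two inputs. First I would record a pair of useful observations: the definition $[\,\cdot\,,\,\cdot\,] = \bullet - \bullet\circ\tau$ means that any claim about $[\,\cdot\,,\,\cdot\,]$ can be turned into a claim about the two morphisms $\bullet$ and $\bullet\circ\tau$, and the braiding $\tau$ may be moved through formulas by means of naturality together with the components formula $\tau(L\otimes L')=(-1)^{|L||L'|}(R^{(2)}\ra L')\otimes(R^{(1)}\ra L)$.

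For (i) the calculation is almost immediate. The plan is to evaluate $[\,\cdot\,,\,\cdot\,]\circ\tau = \bullet\circ\tau - \bullet\circ\tau\circ\tau$ on a homogeneous element and observe that in the triangular case $R_{21}^{-1}=R$ forces $\tau\circ\tau = \id$. Thus $[\,\cdot\,,\,\cdot\,]\circ\tau = -(\bullet-\bullet\circ\tau) = -[\,\cdot\,,\,\cdot\,]$. Equivalently, expanding the definition gives
\begin{equation*}
[L,L'] + (-1)^{|L||L'|}\,[R^{(2)}\ra L',R^{(1)}\ra L] = 0~,
\end{equation*}
where the crossed terms cancel exactly because triangularity collapses the product $R^{(1')}R^{(2)}\otimes R^{(2')}R^{(1)}$ to $1\otimes 1$.

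For (iii) the plan is to compute $[L\bullet L',L'']$ directly from the definition of the bracket, and then apply weak associativity (\ref{eqn:compweakassociativity}) in both terms. The term $(L\bullet L')\bullet L''$ becomes $(\phi^{(1)}\ra L)\bullet((\phi^{(2)}\ra L')\bullet(\phi^{(3)}\ra L''))$, producing the first summand in the derivation formula once one recognises $(\phi^{(2)}\ra L')\bullet(\phi^{(3)}\ra L'') - \tau\text{-version}$ as an internal commutator. For the second term, the plan is to use naturality of $\tau$ together with the hexagon identities (equivalently the quasitriangularity axiom on the $R$-matrix) to slide the braiding past $\bullet$, picking up the factors $\widetilde\phi^{-1}$ and $R$ that appear in the claim. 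This is precisely the standard derivation computation $[ab,c]=a[b,c]+[a,c]b$, but with all reorderings and reassociations tracked by the universal elements.

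For (ii) the strategy is the classical one: writing out $[[L,L'],L''] + (\text{cyclic permutations})$ in terms of $\bullet$ alone produces a sum of $12$ terms, which in a strict symmetric monoidal category would cancel in pairs via associativity and antisymmetry. Here the plan is the same, but each cancellation is now mediated by an application of (\ref{eqn:compweakassociativity}) (to reassociate triple products) and the triangularity condition $R_{21}^{-1}=R$ (to unwind the inverse braidings produced by $\tau\circ\tau$). The cocycle conditions satisfied by $\phi$ and the hexagon relations between $\phi$ and $R$ guarantee that the $\phi$'s appearing in the three cyclic summands of $\mathrm{Jac}$ (from $\id\otimes\id\otimes\id$, $\tau\circ\Phi$, and $\Phi^{-1}\circ\tau$) match precisely so that the rebracketed terms cancel. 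I expect this to be the main obstacle: keeping track of the $\phi$- and $R$-decorations during the pairwise cancellations is genuinely delicate, and the cleanest way to organise the bookkeeping is to first prove (iii) and antisymmetry from (i), then derive Jacobi by applying the derivation property of $[\,\cdot\,,L'']$ to the product $L\bullet L'$ and comparing with a second expansion obtained by applying antisymmetry.
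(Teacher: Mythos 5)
Your proposal is correct and follows essentially the same route as the paper: item (i) is the same one-line computation using that triangularity forces $\tau^{-1}=\tau$ (equivalently $\tau\circ\tau=\id$), and for items (ii) and (iii) the paper likewise only invokes ``standard manipulations'' with the weak associativity identity \eqref{eqn:compweakassociativity} and the $R$-matrix properties, which is precisely the bookkeeping you outline. Your suggestion to obtain the braided Jacobi identity from the derivation property together with antisymmetry is a sensible way to organise those manipulations and is consistent with the paper's (unwritten) details.
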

\begin{proof}
Item (i) follows from a short calculation 
\begin{flalign}
[\, \cdot\, , \, \cdot\, ]  = \bullet - \bullet \circ \tau =
 -(\bullet \circ \tau - \bullet) = 
 -(\bullet- \bullet \circ \tau^{-1} ) \circ \tau = 
  -(\bullet - \bullet \circ \tau ) \circ \tau = - [\, \cdot\, , \, \cdot\, ] \circ \tau~,
\end{flalign}
where in the fourth equality we have used triangularity of the $R$-matrix which implies $\tau^{-1}=\tau$. 
The proofs of items (ii) and (iii) involve standard manipulations using the weak 
associativity of the internal composition \eqref{eqn:compweakassociativity}
and standard properties of the $R$-matrix (see e.g.\ \cite[Section~5.1]{Barnes:2014}).
\end{proof}
\begin{cor}\label{cor:endisLie}
Let $H$ be a triangular quasi-Hopf algebra and  $V$ any object in ${}^H\MMM$. 
Then the ${}^H\MMM$-object given by the internal endomorphisms $\mathrm{end}(V)$, together with
the internal commutator $[\,\cdot\,,\,\cdot\,]$ given in (\ref{eqn:commutator}),
is a Lie algebra in ${}^H\MMM$.
\end{cor}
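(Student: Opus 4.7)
The plan is to observe that Corollary~\ref{cor:endisLie} is essentially a repackaging of Proposition~\ref{prop:bracket}, so the proof reduces to matching the axioms established there against the definition of a Lie algebra object in a symmetric monoidal category.

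First I would recall the definition: a Lie algebra in a symmetric braided monoidal category such as ${}^H\MMM$ (with $H$ triangular, so that $\tau^2 = \id$) consists of an object $L$ together with an ${}^H\MMM$-morphism $[\,\cdot\,,\,\cdot\,]: L\otimes L \to L$ satisfying braided antisymmetry $[\,\cdot\,,\,\cdot\,] + [\,\cdot\,,\,\cdot\,]\circ\tau = 0$ and the braided Jacobi identity $\mathrm{Jac}=0$ in the form \eqref{eqn:Jacobi}. Note that triangularity is essential here, since in the genuinely braided case one would instead need to work with quantum Lie algebras in a more elaborate sense.

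Next I would verify the three ingredients. The object part is clear: $\mathrm{end}(V) = \hom(V,V)$ is an object of ${}^H\MMM$ by the definition of the internal hom-functor. The bracket $[\,\cdot\,,\,\cdot\,] = \bullet - \bullet\circ\tau$ is an ${}^H\MMM$-morphism because it is built from two ${}^H\MMM$-morphisms: the internal composition $\bullet$ of \eqref{eqn:compositiongeneral}, which is an ${}^H\MMM$-morphism by construction via currying, and the braiding $\tau$, which is a natural isomorphism of ${}^H\MMM$. Consequently $\mathrm{end}(V)\otimes\mathrm{end}(V) \xrightarrow{\bullet - \bullet\circ\tau} \mathrm{end}(V)$ lies in ${}^H\MMM$.

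Finally I would invoke the relevant parts of Proposition~\ref{prop:bracket}: item~(i), applicable precisely because $H$ is assumed triangular, supplies the braided antisymmetry axiom, and item~(ii), again valid under triangularity, supplies the braided Jacobi identity. Together with the preceding structural observations this is exactly what it means for $\bigl(\mathrm{end}(V),[\,\cdot\,,\,\cdot\,]\bigr)$ to be a Lie algebra in ${}^H\MMM$, so the corollary follows. There is no serious obstacle; the only mild subtlety is being explicit that the symmetric-monoidal notion of Lie algebra is the correct target definition in the triangular case, which is why item~(iii) of Proposition~\ref{prop:bracket} (the derivation property, valid for generic quasitriangular $H$) is not needed here.
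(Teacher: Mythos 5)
Your proposal is correct and takes essentially the same route as the paper, which states this result as an immediate corollary of Proposition~\ref{prop:bracket} without further argument: items~(i) and~(ii) of that proposition (both requiring triangularity) are precisely the braided antisymmetry and braided Jacobi identity axioms, and the bracket is an ${}^H\MMM$-morphism by construction from $\bullet$ and $\tau$. Your closing remark that item~(iii) is not needed is also accurate.
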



\subsection{Derivations}
We give a description of the derivations on an object $A$ in $^H\AAA^{\mathrm{com}}$
by using universal constructions in the braided closed monoidal category ${}^H\MMM$ to formalize a suitable version of the Leibniz rule, that is 
compatible with the structures in ${}^H\MMM$, in terms of an equalizer.
Let us start by noticing 
that for any object $V$ in ${}^H_{}{}^{}_{\bol{A}}\MMM{}^{\mathrm{sym}}_{\bol{A}}$ 
there is an ${}^H\MMM$-morphism
\begin{flalign}\label{eqn:leftAactionvsrepresentation1}
\widehat{l} := \zeta(l) : \bol{A}\longrightarrow \mathrm{end}(\bol{V})~,
\end{flalign}
which is obtained by currying the left $A$-action $l : A\otimes V\to V$.
Similarly to~\cite[Lemma~4.1]{Barnes:2014}, one can show that (\ref{eqn:leftAactionvsrepresentation1}) is
moreover an ${}^H\AAA$-morphism to the algebra of internal endomorphisms, cf.\ Example \ref{defi:internalendalgebra}.
In particular, this implies that for any object $A$ in ${}^H\AAA^{\mathrm{com}}$ 
there exist two parallel $^H\MMM$-morphisms
\begin{flalign}
\xymatrix{
\mathrm{end}(A) \otimes A  \ar@<-1ex>[rrr]_-{\widehat{l} \circ \ev} \ar@<1ex>[rrr]^-{[\, \cdot\, , \, \cdot\, ]} 
 &&& \mathrm{end}(A)
}~,
\end{flalign}
where for brevity we denote by $[\, \cdot\, , \, \cdot\, ]$ the 
composition $[\, \cdot\, , \, \cdot\, ] \circ \big(\id \otimes \widehat{l} \ \big)$ with $l : A\otimes A\to A$
the left $A$-action induced by the product in $A$. 
\begin{defi}\label{defi:derivations}
Let $A$ be an object in $^H\AAA^{\mathrm{com}}$. The derivations of $A$ is the object $\mathrm{der}(A)$ 
in ${}^H\MMM$ which is defined by the equalizer
\begin{flalign}\label{eqn:derivations}
\xymatrix{
\mathrm{der}(A)\ar[rr]&& \mathrm{end}(A) \ar@<-1ex>[rrr]_-{\zeta(\widehat{l} \circ \ev)} \ar@<1ex>[rrr]^-{\zeta([\, \cdot\,,\, \cdot\,  ])}  &&& \hom(A, \mathrm{end}(A))
}
\end{flalign}
in ${}^H\MMM$.
 \end{defi}

In the category $^H\MMM$ equalizers may be computed by taking the kernel of the difference
of the two parallel morphisms. In particular, $\mathrm{der}(A)$ can be represented explicitly as the kernel
\begin{flalign}\label{eqn:der}
\mathrm{der}(A) = \mathrm{Ker}\Big(\zeta \big([\, \cdot\, , \,\cdot\, ] - \widehat{l} \circ \ev\big) \Big)~.
\end{flalign}

The following lemma will allow us to establish a relation between our definition of derivations
and the standard definition in terms of a Leibniz rule.
\begin{lem}\label{lem:der}
Let $A$ be any object in $^H\AAA^{\mathrm{com}}$. An $^H\MMM$-subobject $U \subseteq \mathrm{end}(A)$ 
is an $^H\MMM$-subobject of $\mathrm{der}(A)$ if and only if
\begin{flalign}\label{egn:derbracket}
[L, a]  = \widehat{l}(\ev ( L \otimes a) )~,
\end{flalign}
for all $L \in U$ and $a \in A$.
\end{lem}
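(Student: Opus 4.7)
The plan is to derive the claimed equivalence directly from the universal property of the equalizer \eqref{eqn:derivations} together with Lemma~\ref{lem:evcurry}(ii), without any further computation. Let $\iota_U : U \hookrightarrow \mathrm{end}(A)$ denote the ${}^H\MMM$-monomorphism corresponding to the subobject inclusion. Using the explicit kernel description \eqref{eqn:der}, the statement that $\iota_U$ factors through $\mathrm{der}(A) \hookrightarrow \mathrm{end}(A)$ is equivalent to
\begin{flalign}
\zeta\big([\,\cdot\,,\,\cdot\,] - \widehat{l}\circ\ev\big)\circ \iota_U = 0~.
\end{flalign}
This is just the universal property of the equalizer phrased via the difference of the two parallel ${}^H\MMM$-morphisms.

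Next I would apply Lemma~\ref{lem:evcurry}(ii) to the ${}^H\MMM$-morphism $f := [\,\cdot\,,\,\cdot\,] - \widehat{l}\circ\ev : \mathrm{end}(A)\otimes A \to \mathrm{end}(A)$ and the ${}^H\MMM$-morphism $j := \iota_U$. The lemma translates the vanishing of $\zeta(f)\circ \iota_U$ into the equivalent vanishing of $f\circ(\iota_U \otimes \id)$ as an ${}^H\MMM$-morphism $U\otimes A \to \mathrm{end}(A)$. Evaluating on elements $L\in U$ and $a\in A$, this last condition reads precisely $[L,a] = \widehat{l}(\ev(L\otimes a))$, which is exactly \eqref{egn:derbracket}. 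Running the argument in both directions gives the `if and only if'.

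The argument is essentially a formal consequence of the two pieces of structure already established, so no serious obstacle is expected; the only subtlety lies in ensuring that the difference $[\,\cdot\,,\,\cdot\,] - \widehat{l}\circ\ev$ is again an ${}^H\MMM$-morphism (so that Lemma~\ref{lem:evcurry}(ii) applies), which is automatic because ${}^H\MMM$ is enriched over $k$-modules and $H$-equivariance is preserved under $k$-linear combinations. The rest is definition-chasing through the currying bijection $\zeta$ of the braided closed monoidal category ${}^H\MMM$.
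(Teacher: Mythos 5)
Your proposal is correct and follows essentially the same route as the paper: both form the difference $f := [\,\cdot\,,\,\cdot\,] - \widehat{l}\circ\ev$, use the kernel realization \eqref{eqn:der} of the equalizer, and invoke Lemma~\ref{lem:evcurry}(ii) to pass between $\zeta(f)\circ j = 0$ and $f\circ(j\otimes\id) = 0$. The additional remark on $H$-equivariance of the difference is a harmless elaboration the paper leaves implicit.
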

\begin{proof}
Denoting by $f := [\, \cdot\, , \,\cdot\, ] - \widehat{l} \circ \ev : \mathrm{end}(A)\otimes A \to \mathrm{end}(A)$
and $j : U \to \mathrm{end}(A) $ the inclusion ${}^H\MMM$-morphism,
we have to show that $\zeta(f) \circ j =0$ if and only if $f\circ (j\otimes \id)=0$.
This is a consequence of item (ii) of Lemma~\ref{lem:evcurry}.
\end{proof}
\begin{rem}\label{rem:usualderiv}
We explain how our definition of derivations is related to the standard 
definition in terms of a Leibniz rule:
Let $L\in\mathrm{der}(A)$ be any derivation. Then Lemma~\ref{lem:der} implies that
$[L, a]  = \widehat{l}(\ev ( L \otimes a) )$. Evaluating this equation on some $a^\prime\in A$, we obtain
\begin{flalign}
\ev\big([L, a] \otimes a^\prime\,\big) =  \ev\big(\, \widehat{l}(\ev ( L \otimes a) )\otimes a^\prime\, \big)~.
\end{flalign}
Using now the evaluation identity \eqref{eqn:evcompcompatibility} and also item (i) of Lemma~\ref{lem:evcurry},
we can simplify this equation and obtain
\begin{multline}
\ev\Big((\phi^{(1)}\ra L) \otimes \big( (\phi^{(2)}\ra a)\,(\phi^{(3)}\ra a^\prime\,)\big)\Big)\\
- (-1)^{\vert L\vert\,\vert a\vert } ~
(\phi^{(1)}\, R^{(2)}\ra a)\, \ev\Big((\phi^{(2)} \, R^{(1)}\ra L)\otimes (\phi^{(3)}\ra a^\prime)\Big)
= \ev\big(L\otimes a\big)\, a^\prime~,
\end{multline}
for all homogeneous $a,a^\prime \in A$ and $L\in\mathrm{der}(A)$.
For the special case of trivial $R$-matrix $R=1\otimes 1$ and associator $\phi=1\otimes 1\otimes 1$ the last equation
reduces to $L(a \, a^\prime\,) = L(a)\,a^\prime + (-1)^{\vert L\vert\,\vert a\vert }~a\, L(a^\prime\, )$,
which is exactly the Leibniz rule for a graded derivation. Hence, the equalizer (\ref{eqn:derivations})
provides us with a suitable generalization of the graded Leibniz rule that is consistent with
the structures in the braided closed monoidal category ${}^H\MMM$.
\end{rem}

Finally, we prove a structural result for our derivations.
\begin{propo}\label{propo:der}
Let $H$ be a triangular quasi-Hopf algebra and  $A$ any object in $^H\AAA^{\mathrm{com}}$.
Then the ${}^H\MMM$-object given by the derivations $\mathrm{der}(A)$, together with 
the internal commutator $[\,\cdot\,,\,\cdot\,]$ given in (\ref{eqn:commutator}),
is a Lie algebra in ${}^H\MMM$.
\end{propo}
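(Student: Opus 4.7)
The strategy is to reduce the proposition to a closure statement. By Corollary \ref{cor:endisLie}, $\mathrm{end}(A)$ is already a Lie algebra in ${}^H\MMM$ with internal commutator $[\,\cdot\,,\,\cdot\,]$. Since $\mathrm{der}(A)$ sits as an ${}^H\MMM$-subobject of $\mathrm{end}(A)$ via the equalizer inclusion $j:\mathrm{der}(A)\to\mathrm{end}(A)$, the braided antisymmetry and braided Jacobi identity for $\mathrm{der}(A)$ follow automatically from the corresponding identities on $\mathrm{end}(A)$ (Proposition \ref{prop:bracket}(i)--(ii)): these are equalities of ${}^H\MMM$-morphisms which restrict along the monic morphisms $j\otimes j$ and $(j\otimes j)\otimes j$. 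Thus the entire content of the proposition is the closure statement, namely that $[\,\cdot\,,\,\cdot\,]\circ(j\otimes j)$ factors through $j$.

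To prove closure, I would apply Lemma \ref{lem:der} to the ${}^H\MMM$-subobject $U:=\mathrm{Im}\big([\,\cdot\,,\,\cdot\,]\circ(j\otimes j)\big)\subseteq\mathrm{end}(A)$. The lemma reduces $U\subseteq\mathrm{der}(A)$ to the pointwise identity
\begin{flalign}
\bigl[[L,L'],a\bigr]=\widehat{l}\bigl(\ev\bigl([L,L']\otimes a\bigr)\bigr)
\end{flalign}
for all $L,L'\in\mathrm{der}(A)$ and $a\in A$, where on the left $a$ stands for $\widehat{l}(a)$. Equivalently, at the level of morphisms we must show
\begin{flalign}
\zeta\bigl([\,\cdot\,,\,\cdot\,]-\widehat{l}\circ\ev\bigr)\circ[\,\cdot\,,\,\cdot\,]\circ(j\otimes j)=0,
\end{flalign}
which by item (ii) of Lemma \ref{lem:evcurry} is exactly the pointwise identity above.

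To verify it, I would expand the outer bracket $[[L,L'],\widehat{l}(a)]$ using the braided Jacobi identity of Proposition \ref{prop:bracket}(ii) as a sum of two triple brackets in which the argument $\widehat{l}(a)$ (suitably translated by $R$-matrix and associator factors) appears inside each inner bracket. The derivation hypothesis on $L$ and $L'$, in the form provided by Lemma \ref{lem:der}, then converts each inner bracket of the type $[L'',\widehat{l}(a')]$ into $\widehat{l}(\ev(L''\otimes a'))$. Using that $\widehat{l}:A\to\mathrm{end}(A)$ is an ${}^H\AAA$-morphism (so intertwines $\bullet$ with the product of $A$) and then applying the braided derivation property of Proposition \ref{prop:bracket}(iii) in reverse, the two resulting contributions combine into a single term $\widehat{l}\bigl(\ev([L,L']\otimes a)\bigr)$.

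The main obstacle is purely bookkeeping: tracking the nested Sweedler components of the $R$-matrix and the (inverse) associator $\phi$ that decorate the braided Jacobi identity and the braided derivation property, so that they collapse against each other to yield the clean right-hand side. This is a standard manipulation in quasi-Hopf representation categories and uses only the quasi-triangularity axioms plus the hexagon and pentagon relations, exactly as in the proofs of items (ii) and (iii) of Proposition \ref{prop:bracket}; no new structural input is required. Once this identity is established, $\mathrm{der}(A)$ inherits the structure of a Lie algebra in ${}^H\MMM$ as explained above.
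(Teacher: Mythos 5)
Your proposal is correct and follows essentially the same route as the paper: reduce to the closure statement via Corollary \ref{cor:endisLie} and the subobject inclusion, apply Lemma \ref{lem:der} to the image of the restricted commutator to reduce to the pointwise identity $[[L,L'],a]=\widehat{l}(\ev([L,L']\otimes a))$, and verify that identity using the braided Jacobi identity and antisymmetry together with the derivation property of $L$ and $L'$ and the evaluation/composition compatibilities. The only cosmetic difference is which of the equivalent bookkeeping identities (the evaluation identity \eqref{eqn:evcompcompatibility} versus the braided derivation property of Proposition \ref{prop:bracket}(iii) and the algebra-morphism property of $\widehat{l}$) you invoke in the final simplification, which the paper itself leaves at the same level of detail.
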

\begin{proof}
We already know from Corollary \ref{cor:endisLie} that, under our hypotheses,
 $\mathrm{end}(A)$ together the internal commutator $[\,\cdot\,,\,\cdot\,]$
is a Lie algebra in ${}^H\MMM$. Moreover,
$\mathrm{der}(A)$ is by construction an ${}^H\MMM$-subobject of  $\mathrm{end}(A)$, so it remains
to prove that the image of the restricted internal commutator
\begin{flalign}
[\,\cdot\,,\,\cdot\,] : \mathrm{der}(A)\otimes \mathrm{der}(A)\longrightarrow \mathrm{end}(A)
\end{flalign}
is an ${}^H\MMM$-subobject of $\mathrm{der}(A)$.
Using Lemma~\ref{lem:der} this is the case if and only if 
\begin{flalign}
[[L,L^\prime\, ], a]  = \widehat{l} \big(\ev \big( [L,L^\prime\, ] \otimes a\big) \big)~,
\end{flalign} 
for all $L,L^\prime \in \mathrm{der}(A)$ and $a \in A$. 
One can now easily show that this equality holds true by using
the braided Jacobi identity and antisymmetry (cf.\ items (ii) and (i) of Proposition~\ref{prop:bracket}), 
the derivation property of Lemma~\ref{lem:der} and finally the evaluation identity \eqref{eqn:evcompcompatibility}.
For simplifying the resulting expressions one also needs standard
$R$-matrix properties, which are listed in e.g.~\cite[Section~5.1]{Barnes:2014}.
\end{proof}


\subsection{Cochain twisting of derivations}
We shall briefly study the deformation of derivations under cochain twisting.
For a more complete introduction to these deformation techniques we refer to Part~I.
Let $H$ be a quasitriangular quasi-Hopf algebra and $F$ a cochain twisting element, i.e.\
$F\in H\otimes H$ is an invertible element with the normalization $(\epsilon\otimes \id)(F) = 1 = (\id\otimes\epsilon)(F)$.
Any cochain twisting element defines a braided closed monoidal functor
$\FF : {}^H_{}\MMM \to {}^{H_F}_{}\MMM $, where $H_F$ is the twisted
quasitriangular quasi-Hopf algebra of $H$ by $F$, see e.g.\  \cite[Theorem~5.11]{Barnes:2014}.
The functor $\FF : {}^H_{}\MMM \to {}^{H_F}_{}\MMM $ acts on objects and morphisms
as the identity, and the coherence maps for the braided monoidal structures are the 
${}^{H_F}_{}\MMM$-isomorphisms
\begin{subequations}\label{eqn:coherencemapstensor}
\begin{flalign}
\nn \varphi :  \FF(\bol{V})\otimes_F^{} \FF(\bol{W}) &\longrightarrow \FF(\bol{V}\otimes \bol{W}) ~,\\
v\otimes_F^{}  w &\longmapsto \big(F^{(-1)}\ra v \big)\otimes \big(F^{(-2)}\ra w \big)~,
\end{flalign}
where $F^{-1} = F^{(-1)}\otimes F^{(-2)}$ denotes the inverse cochain twisting element, and
\begin{flalign}\label{eqn:coherencemapstensor2}
\psi : \bol{I}_F^{} \longrightarrow \FF(\bol{I}) ~, \qquad c \longmapsto c~.
\end{flalign}
\end{subequations}
The coherence maps for the internal hom-structures are the ${}^{H_F}_{}\MMM$-isomorphisms
\begin{flalign}\label{eqn:gamma}
\nn\gamma :  \hom_F^{}\big(\FF(\bol{V}),\FF(\bol{W})\big) & \longrightarrow \FF\big(\hom(\bol{V},\bol{W})\big)~,\\
 L&\longmapsto \big( F^{(-1)}\ra \,\cdot\,\big) \circ L \circ \big( S\big(F^{(-2)}\big)\ra \,\cdot\,\big)~.
\end{flalign}
The braided closed monoidal functor $\FF : {}^H_{}\MMM \to {}^{H_F}_{}\MMM $ induces
functors (denoted with abuse of notation by the same symbols) 
$\FF : {}^H_{}\AAA^{\mathrm{com}} \to {}^{H_F}_{}\AAA^{\mathrm{com}} $ and
$\FF : {}^H_{}{}^{}_{A} \MMM_{A}^{\mathrm{sym}} \to {}^{H_F}_{}{}^{}_{\FF(A)}\MMM^{\mathrm{sym}}_{\FF(A)} $,
which allow us to twist quantize algebras and bimodules in ${}^H\MMM$ to
algebras and bimodules in ${}^{H_F}\MMM$.
Details can be found in~\cite[Proposition~5.16]{Barnes:2014}.
\begin{propo}\label{propo:cochainder}
Let $A$ be any object in  ${}^H\AAA^{\mathrm{com}}$ and let $F$ be any cochain 
twisting element based on $H$. Then the coherence map $\gamma : \mathrm{end}_F(\FF(A))\to \FF(\mathrm{end}(A))$ 
restricts to an ${}^{H_F}_{}\MMM$-isomorphism
\begin{flalign}
\gamma: \mathrm{der}_F(\FF(A)) \longrightarrow \FF(\mathrm{der}(A)) ~.
\end{flalign}
\end{propo}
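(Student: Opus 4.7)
The plan is to exploit the equalizer definition of derivations in Definition~\ref{defi:derivations} together with the fact that the braided closed monoidal functor $\FF: {}^H\MMM \to {}^{H_F}\MMM$ acts as the identity on underlying $\bbZ$-graded $k$-modules and morphisms. Since equalizers in both ${}^H\MMM$ and ${}^{H_F}\MMM$ are computed as kernels of the difference in $\MMM$, the functor $\FF$ preserves them; applying $\FF$ to (\ref{eqn:derivations}) yields an equalizer
\[
\FF(\mathrm{der}(A)) \longrightarrow \FF(\mathrm{end}(A)) \rightrightarrows \FF(\hom(A,\mathrm{end}(A)))
\]
in ${}^{H_F}\MMM$. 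On the other hand, $\mathrm{der}_F(\FF(A))$ is by construction the equalizer of the $F$-twisted analogs of the two parallel morphisms, built from $\zeta_F$, $[\,\cdot\,,\,\cdot\,]_F$, $\widehat{l_F}$ and $\ev_F$. The goal is to show that the coherence isomorphism $\gamma$, together with its natural iterate on $\hom_F(\FF(A),\mathrm{end}_F(\FF(A)))$, intertwines the two pairs of parallel morphisms; the universal property of equalizers then produces the claimed restricted isomorphism $\gamma: \mathrm{der}_F(\FF(A)) \to \FF(\mathrm{der}(A))$.

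The argument reduces to four intertwining properties: (a) $\gamma$ intertwines $\bullet_F$ and $\bullet$; (b) $\gamma$ together with $\varphi$ intertwines $\ev_F$ and $\ev$; (c) $\gamma$ intertwines the curried left actions $\widehat{l_F}$ and $\widehat{l}$; and (d) $\gamma$ intertwines the currying isomorphisms $\zeta_F$ and $\zeta$. Because the internal commutator of Definition~\ref{def:commutator} is assembled from $\bullet$ and $\tau$, and because $\varphi$ is known from Part~I to intertwine the braidings $\tau_F$ and $\tau$, property (a) automatically upgrades to an intertwining of $[\,\cdot\,,\,\cdot\,]_F$ and $[\,\cdot\,,\,\cdot\,]$. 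Almost all of (a)--(d) has already been proven in Part~I as part of the braided closed monoidal structure of $\FF$, so the proof mostly amounts to assembling these ingredients. Equivalently, one may argue pointwise via Lemma~\ref{lem:der}: an element $L$ lies in $\mathrm{der}_F(\FF(A))$ iff $[L,a]_F = \widehat{l_F}(\ev_F(L \otimes_F a))$ for all $a\in \FF(A)$, and the intertwinings (a)--(d) translate this identity into the characterizing identity for $\gamma(L) \in \mathrm{der}(A)$; applying the same reasoning to $\gamma^{-1}$ then yields the inverse restriction.

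The main obstacle will be the careful bookkeeping of factors of the cochain twist $F$, the twisted associator $\phi_F$ and the twisted $R$-matrix $R_F = F_{21}\,R\,F^{-1}$ when transporting the commutator bracket and the currying map through $\gamma$. The currying map in ${}^{H_F}\MMM$ involves $\phi_F^{-1}$ and the twisted quasi-antipode, so the verification of (d) in particular requires combining several standard twist and $R$-matrix identities; however, most of this calculation has already been performed in Part~I, so the proof here essentially reduces to invoking the established coherence results and checking that the specific combinations $\zeta(\widehat{l}\circ\ev)$ and $\zeta([\,\cdot\,,\,\cdot\,])$ respect them.
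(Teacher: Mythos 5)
Your proposal is correct and follows essentially the same route as the paper: both identify $\FF(\mathrm{der}(A))$ and $\mathrm{der}_F(\FF(A))$ as equalizers, show that $\gamma$ (together with its induced map on the internal hom-objects) intertwines the two pairs of parallel morphisms, and invoke the universal property of equalizers to obtain the restricted isomorphism. The paper justifies preservation of the equalizer by noting that $\FF$ is an equivalence of categories, and compresses your intertwining checks (a)--(d) into ``a straightforward but slightly lengthy calculation,'' but the substance is the same.
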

\begin{proof}
The braided closed monoidal functor 
$\FF : {}^H_{}\MMM \to {}^{H_F}_{}\MMM $ is an equivalence of categories,
hence it preserves all limits and colimits. It then follows that $\FF(\mathrm{der}(A))$ is the equalizer
of the $^{H_F}\MMM$-diagram
\begin{flalign}\label{eqn:Fderdiagram}
\xymatrix{
\FF(\mathrm{end}(A))  \ar@<-1ex>[rrrr]_-{\FF(\zeta(\widehat{l} \circ \ev))} \ar@<1ex>[rrrr]^-{\FF(\zeta([\, \cdot\, ,\, \cdot\, ]))} &&&& \FF(\hom(A, \mathrm{end}(A)))
} \ .
\end{flalign}
On the other hand, the object $\mathrm{der}_F(\FF(A))$ in ${}^{H_F}\MMM$ is defined according
to Definition \ref{defi:derivations} as the equalizer of the $^{H_F}\MMM$-diagram
\begin{flalign}\label{eqn:derFdiagram}
\xymatrix{
\mathrm{end}_F(\FF(A)) \ar@<-1ex>[rrrr]_-{\zeta_F(\widehat{l}_F \circ \ev_F)} \ar@<1ex>[rrrr]^-{\zeta_F([\, \cdot\, ,\,\cdot\,]_F)}  &&&& \hom_F(\FF(A), \mathrm{end}_F(\FF(A))) \ .
}
\end{flalign}
A straightforward but slightly lengthy calculation shows that
the $^{H_F}\MMM$-diagrams (\ref{eqn:Fderdiagram}) and (\ref{eqn:derFdiagram}) 
are isomorphic: The $^{H_F}\MMM$-diagram
\begin{flalign}\label{eqn:Fpreserveslimits}
\xymatrix{
\ar[dd]_-{\gamma}
\mathrm{end}_F(\FF(A)) \ar@<-1ex>[rrrr]_-{\zeta_F(\widehat{l}_F \circ \ev_F)} \ar@<1ex>[rrrr]^-{\zeta_F([\, \cdot\, ,\,\cdot\,]_F)}  &&&& \hom_F(\FF(A), \mathrm{end}_F(\FF(A))) \ar[d]^-{\gamma\circ (\,\cdot\,)}\\
&&&& \hom_F(\FF(A), \FF(\mathrm{end}(A)))\ar[d]^-{\gamma}\\
\FF(\mathrm{end}(A))  \ar@<-1ex>[rrrr]_-{\FF(\zeta(\widehat{l} \circ \ev))} \ar@<1ex>[rrrr]^-{\FF(\zeta([\, \cdot\, ,\, \cdot\, ]))} &&&& \FF(\hom(A, \mathrm{end}(A)))
}~
\end{flalign}
commutes (i.e.\ the diagram obtained by taking either both upper or lower horizontal arrows commutes)
and the vertical arrows are all ${}^{H_F}\MMM$-isomorphisms.
Due to the universality of limits there must be a unique isomorphism between $\mathrm{der}_F(\FF(A))$ 
and $\FF(\mathrm{der}(A))$. The assertion now follows from the fact that we describe our derivations as a subobject
of the internal endomorphisms (cf.\ (\ref{eqn:der})) and hence the unique isomorphism between $\mathrm{der}_F(\FF(A))$ 
and $\FF(\mathrm{der}(A))$ is the one induced by the isomorphism between $\mathrm{end}_F(\FF(A))$ 
and $\FF(\mathrm{end}(A))$, which is precisely~$\gamma$. 
\end{proof}


\subsection{\label{subsec:internalhomHAMA}Internal homomorphisms}
In~\cite[Section~4]{Barnes:2014} we gave an explicit description of the internal hom-functor 
$\hom_A : \big({}^{H}_{}{}^{}_{\bol{A}}\MMM_{\bol{A}}^{}\big)^{\op}\times {}^{H}_{}{}^{}_{\bol{A}}\MMM_{\bol{A}}^{}\to
{}^{H}_{}{}^{}_{\bol{A}}\MMM_{\bol{A}}^{}$ by imposing a suitable weak right $A$-linearity condition
on the internal hom-functor $\hom$ in ${}^H\MMM$. We shall now give an easier but equivalent
construction of $\hom_A$ for the case where $A$ is an object in ${}^H{\AAA}^{\mathrm{com}}$
and we restrict ourselves to the full subcategory ${}^{H}_{}{}^{}_{\bol{A}}\MMM_{\bol{A}}^{\mathrm{sym}}$ 
of symmetric $A$-bimodules in ${}^H\MMM$. This construction involves a generalization of
the internal commutator $[\,\cdot\, , \,\cdot\, ]$ from Definition \ref{def:commutator}, and it will allow us later on to interpret the internal 
hom-objects $\hom_A(V,W)$ as zeroth order differential operators.
\sk

Let $A$ be an object in  $^H\AAA^{\mathrm{com}}$ and let $V,W$ be two objects
in $ {}^{H}_{}{}^{}_{\bol{A}}\MMM_{\bol{A}}^{\mathrm{sym}} $. 
We define an ${}^H\MMM$-morphism (denoted with abuse of notation by the same symbol as the internal commutator)
\begin{flalign}\label{eqn:brackethomVW}
[\,\cdot\,,\,\cdot\, ] := \bullet \circ \big( \id\otimes \widehat{l} \ \big) - \bullet \circ \big(\, \widehat{l} \otimes\id\big)
\circ \tau  : \hom(V,W) \otimes A &\longrightarrow \hom(V,W)~,
\end{flalign}
where $\widehat{l}$ was defined in (\ref{eqn:leftAactionvsrepresentation1}).
Then
\begin{flalign}
[L,a] = L\bullet \widehat{l}(a) - (-1)^{\vert L\vert\,\vert a\vert}\ \widehat{l}\big(R^{(2)}\ra a\big) \bullet 
\big(R^{(1)}\ra L\big)~,
\end{flalign} 
for all  homogeneous $L\in \hom(V,W)$ and $a\in A$.
\begin{defi}\label{defi:homAcurrent}
The object $\hom_A(V,W)$ in  $ ^H\MMM $ is defined by the equalizer
\begin{flalign}
\xymatrix{
\hom_A(V,W)\ar[rr] && \hom(V,W) \ar@<-1ex>[rrr]_-{0} \ar@<1ex>[rrr]^-{\zeta([\, \cdot\,,\, \cdot\,  ])}  &&& \hom(A, \hom(V,W))
}
\end{flalign}
in $ ^H\MMM $.
This equalizer can be realized explicitly in terms of the ${}^H\MMM$-subobject
\begin{flalign}\label{eqn:hom}
\hom_A(V,W) = \mathrm{Ker}\big(\zeta([\, \cdot\, , \, \cdot\, ])\big)\subseteq \hom(V,W)~
\end{flalign}
of the internal hom-object $\hom(V,W)$ in ${}^H\MMM$.
\end{defi}

\begin{lem}\label{lem:homA}
Let $A$ be any object in $^H\AAA^{\mathrm{com}}$ and let $V,W$
be any two objects in $ {}^{H}_{}{}^{}_{\bol{A}}\MMM_{\bol{A}}^{\mathrm{sym}} $. 
An $^H\MMM$-subobject $U \subseteq \hom(V,W)$ 
is an $^H\MMM$-subobject of $\hom_A(V,W)$ if and only if
\begin{flalign}\label{egn:hombracket}
[L, a]  = 0~,
\end{flalign}
for all $L \in U$ and $a \in A$.
\end{lem}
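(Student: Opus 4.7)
The strategy is to mimic the proof of Lemma~\ref{lem:der} almost verbatim, since the definition of $\hom_A(V,W)$ in~\eqref{eqn:hom} has exactly the same structural form as the definition of $\mathrm{der}(A)$ in~\eqref{eqn:der}: both are kernels of a morphism of the form $\zeta(f)$ for some ${}^H\MMM$-morphism $f : \hom(-,-) \otimes A \to \hom(-,-)$. Here the relevant $f$ is just the bracket $[\,\cdot\,,\,\cdot\,] : \hom(V,W) \otimes A \to \hom(V,W)$ from~\eqref{eqn:brackethomVW} (with the zero map subtracted, which does nothing).

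First I would denote by $j : U \to \hom(V,W)$ the inclusion ${}^H\MMM$-morphism of the given subobject. By the universal property of the equalizer (equivalently, by the explicit kernel description in~\eqref{eqn:hom}), $U$ is an ${}^H\MMM$-subobject of $\hom_A(V,W)$ if and only if $\zeta([\,\cdot\,,\,\cdot\,]) \circ j = 0$. Next I would apply item (ii) of Lemma~\ref{lem:evcurry} to the morphism $f = [\,\cdot\,,\,\cdot\,]$, which states that $\zeta(f) \circ j = 0$ if and only if $f \circ (j \otimes \id) = 0$. Unpacking the latter on elements gives exactly $[L,a] = 0$ for all $L \in U$ and $a \in A$.

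There is really no obstacle here: the proof is a one-line invocation of the standard equalizer-versus-kernel realization in ${}^H\MMM$ combined with the currying adjunction encoded in Lemma~\ref{lem:evcurry}(ii). The only thing to double-check is that the equalizer in the definition of $\hom_A(V,W)$ is indeed being compared against the zero morphism, so that subtracting yields the morphism $[\,\cdot\,,\,\cdot\,]$ itself (rather than a difference of two morphisms as in the derivation case); but this is immediate from the diagram displayed in Definition~\ref{defi:homAcurrent}. Hence no additional manipulation with $R$-matrix or associator components is required, in contrast to the interpretation one would later give to the condition $[L,a]=0$ as a weak right $A$-linearity statement.
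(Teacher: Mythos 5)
Your proof is correct and follows exactly the same route as the paper: set $f := [\,\cdot\,,\,\cdot\,] : \hom(V,W)\otimes A \to \hom(V,W)$, let $j : U \to \hom(V,W)$ be the inclusion, and invoke Lemma~\ref{lem:evcurry}~(ii) to convert $\zeta(f)\circ j = 0$ into $f\circ(j\otimes\id)=0$. Your observation that one parallel arrow is the zero morphism (so no subtraction is needed, unlike the derivation case) is a sensible sanity check but is implicit in the paper's one-line argument.
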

\begin{proof}
Denoting by $f := [\, \cdot\, , \,\cdot\, ] : \hom(V,W)\otimes A \to \hom(V,W)$
and $j : U \to \hom(V,W) $ the inclusion ${}^H\MMM$-morphism,
we have to show that $\zeta(f) \circ j =0$ if and only if $f\circ (j\otimes \id)=0$.
This is a consequence of item (ii) of Lemma~\ref{lem:evcurry}.
\end{proof}

The object $\hom_A(V,W)$ in ${}^H\MMM$ given by (\ref{eqn:hom}) 
carries a natural left and right $A$-action given by the ${}^H\MMM$-morphisms
\begin{subequations}\label{eqn:lrhomA}
\begin{flalign}
l &:= \bullet \circ (\, \widehat{l} \otimes \id): A \otimes \hom_A(V,W) \longrightarrow \hom_A(V,W)~,\\[4pt]
r &:= \bullet \circ (\id \otimes \widehat{l} \ ): \hom_A(V,W) \otimes A \longrightarrow \hom_A(V,W)~.
\end{flalign}
\end{subequations}
It is moreover an object in $ {}^{H}_{}{}^{}_{\bol{A}}\MMM_{\bol{A}}^{\mathrm{sym}} $ because the result
of Lemma~\ref{lem:homA} is precisely the symmetry condition for the left and right $A$-action
given in (\ref{eqn:lrhomA}) (see also (\ref{eqn:brackethomVW})).
The assignment of these objects $\hom_A(V,W)$ in $ {}^{H}_{}{}^{}_{\bol{A}}\MMM_{\bol{A}}^{\mathrm{sym}} $
is functorial and we denote the corresponding functor by
\begin{flalign}\label{eqn:internalhomHAMA}
\hom_A : \big( {}^{H}_{}{}^{}_{\bol{A}}\MMM_{\bol{A}}^{\mathrm{sym}} \big)^\op\times 
 {}^{H}_{}{}^{}_{\bol{A}}\MMM_{\bol{A}}^{\mathrm{sym}} \longrightarrow 
  {}^{H}_{}{}^{}_{\bol{A}}\MMM_{\bol{A}}^{\mathrm{sym}} ~.
\end{flalign}
To any $\big( {}^{H}_{}{}^{}_{\bol{A}}\MMM_{\bol{A}}^{\mathrm{sym}} \big)^\op\times 
 {}^{H}_{}{}^{}_{\bol{A}}\MMM_{\bol{A}}^{\mathrm{sym}}$-morphism
 $(f^\op : V\to V^\prime , ~ g: W\to W^\prime\, )$ this functor assigns 
\begin{flalign}
\hom_A(f^\op,g): \hom_A(V,W) \longrightarrow 
\hom_A(V^\prime,W^\prime\, )~, \qquad L \longmapsto g \circ L \circ f~.
\end{flalign} 
Finally, we show
that (\ref{eqn:internalhomHAMA}) is an internal hom-functor
in ${}^{H}_{}{}^{}_{\bol{A}}\MMM_{\bol{A}}^{\mathrm{sym}}$.
\begin{propo}
The braided monoidal category ${}^{H}_{}{}^{}_{\bol{A}}\MMM_{\bol{A}}^{\mathrm{sym}}$ is closed: There is a natural bijection $\zeta^A : \Hom_{{}^{H}_{}{}^{}_A\MMM^{\mathrm{sym}}_A}(\text{--}
\otimes_A^{}\text{--},\text{--})
\Rightarrow \Hom_{{}^{H}_{}{}^{}_A\MMM^{\mathrm{sym}}_A}(\text{--},\hom_A(\text{--},\text{--}))$ 
with components given by
\begin{flalign}
\nn \zeta^{\bol{A}} (f) : \bol{V} &\longrightarrow \hom_A(\bol{W},\bol{X})~,\\
v&\longmapsto f\Big(\big(\phi^{(-1)}\ra v \big) \otimes_{\bol{A}}^{} \big(\big(\phi^{(-2)}\,\beta\,S(\phi^{(-3)})\big) \ra (\,\cdot\,)\big)\Big)~,\label{eqn:rightcurryingA}
\end{flalign}
for all ${}^{H}_{}{}^{}_{\bol{A}}\MMM_{\bol{A}}^{\mathrm{sym}}$-morphisms $f: V\otimes_{\bol{A}}^{} W\to X$.
The components of its inverse are
\begin{flalign}
\nn  (\zeta^A)^{-1}(g) : \bol{V}\otimes_{\bol{A}}^{}\bol{W} &\longrightarrow \bol{X}~,\\
  v\otimes_{\bol{A}}^{} w & \longmapsto \phi^{(1)}\ra \Big(g(v)\big( \big(S(\phi^{(2)})\,\alpha\,\phi^{(3)} \big)\ra w\big)\Big)~,\label{eqn:inversecurryingA}
\end{flalign}
for all ${}^H_{}{}^{}_{\bol{A}}\MMM^{\mathrm{sym}}_{\bol{A}}$-morphisms $g: \bol{V}\to\hom_A(\bol{W},\bol{X})$.
\end{propo}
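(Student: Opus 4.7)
The plan is to derive $\zeta^A$ and its inverse directly from the ambient currying bijection $\zeta$ of ${}^H\MMM$ by exploiting the universal properties that define $\otimes_A$ as a quotient in ${}^H\MMM$ and $\hom_A$ as a kernel. Concretely, given any ${}^{H}_{}{}^{}_A\MMM^{\mathrm{sym}}_A$-morphism $f: V\otimes_A W\to X$, I would pre-compose with the canonical projection $\pi: V\otimes W\to V\otimes_A W$ of (\ref{eqn:otimesA}) to obtain an ${}^H\MMM$-morphism $f\circ\pi$, and observe that the stated formula (\ref{eqn:rightcurryingA}) for $\zeta^A(f)$ is exactly $\zeta(f\circ\pi)$ as computed by (\ref{eqn:curryingmap}). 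In the opposite direction, given $g: V\to\hom_A(W,X)$ in ${}^{H}_{}{}^{}_A\MMM^{\mathrm{sym}}_A$, regarded via (\ref{eqn:hom}) as an ${}^H\MMM$-morphism into $\hom(W,X)$, the formula (\ref{eqn:inversecurryingA}) is just $\zeta^{-1}(g)$ of (\ref{eqn:inversecurrying}).

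With the definitions in place, four things then require checking. First, $\zeta(f\circ\pi)(v)$ actually lands in $\hom_A(W,X)$: by Lemma \ref{lem:homA} this amounts to $[\zeta^A(f)(v),a]=0$ for all $a\in A$, and after evaluating on a test element $w\in W$, using (\ref{eqn:evcompcompatibility}) and the fact that $\ev\circ(\widehat{l}\otimes\id)$ is the left $A$-action on $V$ and on $W$, this vanishing becomes exactly the statement that $f\circ\pi$ annihilates $r\otimes\id-(\id\otimes l)\circ\Phi$, which holds by the very definition (\ref{eqn:otimesA}) of $V\otimes_A W$. Second, $\zeta^A(f)$ is both left and right $A$-linear: left $A$-linearity reduces, after using the $H$-equivariance of $\zeta$, to the left $A$-linearity of $f$ in its first slot, whereas right $A$-linearity corresponds, via the symmetric bimodule axioms (\ref{eqn:wcbimodl})--(\ref{eqn:wcbimodr}), to left $A$-linearity of $f$ in its second slot. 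Third, $\zeta^{-1}(g)$ must descend through $\pi$, which is the same vanishing-on-the-image condition, now read in reverse: via item (ii) of Lemma \ref{lem:evcurry}, the factorization of $g$ through the kernel $\hom_A(W,X)$ supplies exactly the identity $[g(v),a]=0$ needed to annihilate $r\otimes\id-(\id\otimes l)\circ\Phi$. Fourth, the descended map is $A$-bilinear, which is dual to the argument for $\zeta^A(f)$.

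Bijectivity is then automatic: $\zeta^A$ and $(\zeta^A)^{-1}$ are mutually inverse because their lifts $\zeta$ and $\zeta^{-1}$ are, and the restriction and corestriction are unambiguous because $\pi$ is epic and the inclusion $\hom_A\hookrightarrow\hom$ is monic in ${}^H\MMM$. Naturality in all three arguments is inherited verbatim from the naturality of $\zeta$, in the same manner as in the proof of Lemma \ref{lem:evcurry}. The expected obstacle lies in the first and third checks above: one must confirm that the associators $\phi$, the $R$-matrix, and the quasi-antipode elements $\alpha,\beta$ conspire so that the braided commutator relation encoded by (\ref{eqn:brackethomVW}) literally matches the bimodule balancing relation (\ref{eqn:otimesA}). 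This is a somewhat delicate diagrammatic computation relying on standard quasi-Hopf identities together with the symmetric bimodule conditions (\ref{eqn:wcbimod}), but it follows the same template as the proofs of Lemma \ref{lem:homA} and Proposition \ref{propo:der}.
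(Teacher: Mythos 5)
Your overall strategy---realizing $\zeta^A$ and $(\zeta^A)^{-1}$ as the descent/restriction of the ambient currying maps (\ref{eqn:curryingmap}) and (\ref{eqn:inversecurrying}) along the quotient $\pi: V\otimes W\to V\otimes_A W$ and the inclusion $\hom_A(W,X)\subseteq \hom(W,X)$, and then verifying compatibility with the $A$-module structures---is the same as the paper's, which outsources the bijection $\Hom_{{}^{H}_{}{}^{}_{A}\MMM}(V\otimes_A W,X)\cong \Hom_{{}^{H}_{}{}^{}_{A}\MMM_{A}}(V,\hom(W,X))$ to the analogue of Lemma~4.2 of Part~I and then checks only two residual points. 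The issue is that your attribution of which hypothesis delivers which check is scrambled in precisely the places you flag as delicate. The balancing relation in (\ref{eqn:otimesA}) by itself gives only the \emph{right} $A$-linearity of $\zeta^A(f)$ with respect to the action $\bullet\circ(\id\otimes\widehat{l}\ )$ of (\ref{eqn:lrhomA}); it does not give the kernel condition $[\zeta^A(f)(v),a]=0$ of (\ref{eqn:brackethomVW}), because that condition compares $L\bullet\widehat{l}(a)$ with $\widehat{l}(R^{(2)}\ra a)\bullet (R^{(1)}\ra L)$ and so requires transporting $a$ from the right of $v$ all the way to the left of the output. The paper does this in three lines: $(\zeta^A(f)(v))\,a = \zeta^A(f)(v\,a)$ by right $A$-linearity, $= (-1)^{\vert a\vert\,\vert v\vert}\,\zeta^A(f)\big((R^{(2)}\ra a)\,(R^{(1)}\ra v)\big)$ by the symmetry (\ref{eqn:wcbimod}) of the bimodule $V$, and $=(-1)^{\vert a\vert\,\vert v\vert}\,(R^{(2)}\ra a)\,(R^{(1)}\ra \zeta^A(f)(v))$ by left $A$-linearity and $H$-equivariance, which by Lemma~\ref{lem:homA} is exactly membership in $\hom_A(W,X)$. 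The symmetry of $V$ is the indispensable input here (it is the reason the construction lives in ${}^{H}_{}{}^{}_{\bol{A}}\MMM_{\bol{A}}^{\mathrm{sym}}$), so your first check as literally stated---resting only on the annihilation of $r\otimes\id - (\id\otimes l)\circ\Phi$---would not close. Dually, in the inverse direction the descent of $\zeta^{-1}(g)$ through $\pi$ follows from the right $A$-linearity of $g$ (a hypothesis, since $g$ is a bimodule morphism), whereas the kernel condition $[g(v),a]=0$ is what is needed for the right $A$-linearity of the \emph{descended} map, via the symmetry of $W$, of $\hom_A(W,X)$ and of $X$. Once these dependencies are untangled your argument goes through, and the ``delicate diagrammatic computation'' you anticipate is not needed.
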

\begin{proof}
With a proof analogous to~\cite[Lemma~4.2]{Barnes:2014} one shows that
(\ref{eqn:rightcurryingA}) and (\ref{eqn:inversecurryingA}) are the components 
of a natural bijection between the functors
$\Hom_{{}^{H}_{}{}^{}_A\MMM}(\text{--} \otimes_A^{}\text{--},\text{--})$
and $\Hom_{{}^{H}_{}{}^{}_A\MMM{}^{}_A}(\text{--},\hom(\text{--},\text{--}))$.
It thus remains to prove that (1)\  the image of $\zeta^{\bol{A}} (f)$ is contained in $\hom_A(W,X)$ for all $ {}^{H}_{}{}^{}_{\bol{A}}\MMM^{\mathrm{sym}}_{\bol{A}} $-morphisms 
$ f: V \otimes_A W \to X $, and that
(2)\ $(\zeta^A)^{-1}(g)$ is a right $A$-linear map for all $ {}^{H}_{}{}^{}_{\bol{A}}\MMM^{\mathrm{sym}}_{\bol{A}} $-morphisms 
$g : V\to \hom_A(X,Y)$.
\sk

Due to Lemma~\ref{lem:homA}, point (1)\ is shown by the calculation
\begin{flalign}
\nn\big( \zeta^{A}(f)(v)\big) \, a &= \zeta^{A}(f)(v\, a) \\[4pt]
\nn &= (-1)^{\vert a \vert\, \vert v \vert}~\zeta^{A}(f)\big((R^{(2)} \ra a) \, (R^{(1)} \ra v)\big) \\[4pt]
&= (-1)^{\vert a \vert \,\vert v \vert}~\big(R^{(2)} \ra a \big) \,\big( R^{(1)} \ra \zeta^{A}(f)(v)\big)~,
\end{flalign}
for all homogeneous $ a \in A $ and $v \in V$. In the first equality we have used the right $A$-linearity
of $\zeta^{A}(f)$, in the second equality the symmetry of the $A$-bimodule $V$,
and in the last equality the left $A$-linearity and $H$-equivariance of $\zeta^{A}(f)$.
\sk

Point (2) is likewise shown by a short calculation
\begin{flalign}
\nn (\zeta^A)^{-1}(g)\big((v\otimes_A^{} w)\, a\big)&= 
(\zeta^A)^{-1}(g)\big((\phi^{(1)}\ra v) \otimes_A^{} \big((\phi^{(2)}\ra w) \, (\phi^{(3)}\ra a)\big)\big)\\[4pt]
\nn &= \ev\big(g (\phi^{(1)}\ra v ) \otimes_A^{} \big((\phi^{(2)}\ra w)\,(\phi^{(3)}\ra a)\big)\big)\\[4pt]
\nn &=\ev\big( (g(v)\otimes_A^{} w)\,a\big) \\[4pt]
\nn &= (-1)^{\vert a\vert\,(\vert v\vert+\vert w\vert)} ~\big( R^{(2)}\ra a \big) \,\big(R^{(1)}\ra \ev\big(g(v)\otimes_A^{} w\big)\big)\\[4pt]
&= \big((\zeta^A)^{-1}(g)(v\otimes_A^{} w)\big)\, a~,
\end{flalign}
for all homogeneous $ a \in A $, $v \in V$ and $w \in W$. 
The second equality holds by direct inspection (see also Lemma~\ref{lem:evcurry} (i) for a similar statement)
and in the fourth equality we have used the symmetry of the $A$-bimodules
$W$ and $\hom_A(V,W)$ as well as the $H$-equivariance of $\ev$.
The last equality uses the symmetry of the $A$-bimodule $X$.
\end{proof}


\subsection{\label{subsec:diffops}Differential operators and calculi}
Let $A$ be an object in ${}^H\AAA^{\mathrm{com}}$ and $V$ any object in
${}^{H}_{}{}^{}_{\bol{A}}\MMM_{\bol{A}}^{\mathrm{sym}}$. 
We define the internal multi-commutator of order $n\in\bbZ_{>0}$ 
to be the ${}^H\MMM$-morphism
\begin{subequations}\label{eqn:multicom}
\begin{flalign}
[\, \cdot\, , \, \cdot\, ]^{(n)} : 
\left(\cdots \big((\mathrm{end}(V) \otimes A) \otimes A\big) \cdots \right)\otimes A  \longrightarrow \mathrm{end}(V)~,
\end{flalign}
where the source contains $n$ factors of $A$, given by the composition
\begin{flalign}
[\, \cdot\, , \, \cdot\, ]^{(n)} := [\, \cdot\, , \, \cdot\, ] \circ \big([\, \cdot\, , \, \cdot\, ] \otimes \id \big) \circ \cdots \circ \big((\cdots (([\, \cdot\, ,\, \cdot\, ] \otimes \id) \otimes \id) \cdots )\otimes \id\big)~.
\end{flalign}
\end{subequations}
We have suppressed as before the precomposition of the internal multi-commutator 
with $(\cdots ((\id \otimes \widehat{l} \ ) \otimes \widehat{l} \ ) \cdots ) \otimes \widehat{l} \ $, where $\widehat{l}$
is the ${}^H\AAA$-morphism given in (\ref{eqn:leftAactionvsrepresentation1}).
We further denote by $\Phi^{(-n)}$ the combination of associators required to re-bracket the expressions
\begin{flalign}
\xymatrix{
 \mathrm{end}(V) \otimes \big( A \otimes (A \otimes( \cdots (A \otimes A) \cdots ))\big)
  \ar[r]^-{\Phi^{(-n)}} &\left(\cdots \big((\mathrm{end}(V) \otimes A) \otimes A\big) \cdots \right)\otimes A ~,
 }
\end{flalign}
where again the source and target contain $n$ factors of $A$. We shall 
denote the source of this ${}^H\MMM$-isomorphism also
by $\mathrm{end}(V) \otimes A^{\otimes n}$.
\begin{defi}\label{defi:diffop}
Let $A$ be an object in $^H\AAA^{\mathrm{com}}$ and $V$ any object in
${}^{H}_{}{}^{}_{\bol{A}}\MMM_{\bol{A}}^{\mathrm{sym}}$.
The differential operators of order $n\in \bbZ_{\geq0}$ of $V$ is the object $\mathrm{diff}^n(V)$ 
in ${}^H\MMM$ which is defined by the equalizer
\begin{flalign}
\xymatrix{
\mathrm{diff}^n(V) \ar[rr] && \mathrm{end}(V) \ar@<-1ex>[rrrr]_-{0} \ar@<1ex>[rrrr]^-{\zeta ([\, \cdot\, , \, \cdot\, ]^{(n + 1)} \circ \Phi^{(-(n + 1))} )} &&&& \hom(A^{\otimes n}, \mathrm{end}(V))
}
\end{flalign}
in ${}^H\MMM$.
This equalizer can be realized explicitly in terms of the ${}^H\MMM$-subobject
\begin{flalign}\label{eqn:diff}
\mathrm{diff}^n(V) = \mathrm{Ker}\big(\zeta \big([\, \cdot\, , \,\cdot\, ]^{(n + 1)} \circ \Phi^{(-(n + 1))}\big) \big)~
\end{flalign}
of the internal endomorphism object $\mathrm{end}(V)$ in ${}^H\MMM$.
\end{defi}
\begin{rem}\label{rem:diff0}
Comparing Definitions \ref{defi:diffop} and \ref{defi:homAcurrent} we observe that
the order $0$ differential operators $\mathrm{diff}^0(V)$ are 
the internal endomorphisms $\mathrm{end}_A(V)$ in the category ${}^H_{}{}^{}_{A}\MMM^{\mathrm{sym}}_{A}$.
\end{rem}

\begin{lem}\label{lem:diff}
Let $A$ be any object in $^H\AAA^{\mathrm{com}}$ and let $V$
be any object in $ {}^{H}_{}{}^{}_{\bol{A}}\MMM_{\bol{A}}^{\mathrm{sym}} $. 
An $^H\MMM$-subobject $U \subseteq \mathrm{end}(V)$ 
is an $^H\MMM$-subobject of $\mathrm{diff}^n(V)$ if and only if
\begin{flalign}
\big[\big[\cdots \big[ [L ,a_1],a_2 \big], \cdots \big],a_{n + 1}\big] = 0~,
\end{flalign}
for all $L \in U$ and $a_1,a_2, \dots ,a_{n + 1} \in A$.
\end{lem}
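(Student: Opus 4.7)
The plan is to follow the exact same template as the proofs of Lemma \ref{lem:der} and Lemma \ref{lem:homA}, reducing the claim to a single application of item (ii) of Lemma \ref{lem:evcurry}. I would set
\begin{flalign*}
f := [\,\cdot\,,\,\cdot\,]^{(n+1)} \circ \Phi^{(-(n+1))} : \mathrm{end}(V) \otimes A^{\otimes n+1} \longrightarrow \mathrm{end}(V)~,
\end{flalign*}
and denote by $j : U \to \mathrm{end}(V)$ the inclusion ${}^H\MMM$-morphism. By the explicit kernel description \eqref{eqn:diff} of $\mathrm{diff}^n(V)$, the ${}^H\MMM$-subobject $U$ sits inside $\mathrm{diff}^n(V)$ if and only if $\zeta(f) \circ j = 0$. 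Lemma \ref{lem:evcurry} (ii), applied with $V \leadsto \mathrm{end}(V)$, $W \leadsto A^{\otimes n+1}$, $X \leadsto \mathrm{end}(V)$, then converts this condition into the equivalent assertion $f \circ (j\otimes \id) = 0$.

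It remains to verify that the uncurried equation $f \circ (j\otimes \id) = 0$ says exactly that the nested commutator vanishes for all $L \in U$ and $a_1, \dots, a_{n+1} \in A$. Since $\Phi^{(-(n+1))}$ is an ${}^H\MMM$-isomorphism (a bijection at the level of underlying $k$-modules), the vanishing of $f \circ (j \otimes \id)$ is equivalent to the vanishing of $[\,\cdot\,,\,\cdot\,]^{(n+1)}$ on the left-associated image $\big(\cdots((U \otimes A)\otimes A)\cdots\big)\otimes A$. By the iterated definition \eqref{eqn:multicom}, evaluating $[\,\cdot\,,\,\cdot\,]^{(n+1)}$ on a generator built from $L, a_1, \dots, a_{n+1}$ reproduces the iterated nested commutator $\big[\big[\cdots\big[[L, a_1], a_2\big], \cdots\big], a_{n+1}\big]$.

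The one point where a little care is needed is that each level of $\Phi^{(-1)}$ scatters associator factors $\phi^{(-1)} \otimes \phi^{(-2)} \otimes \phi^{(-3)}$ across the tensor product; one has to observe that since the $a_i$ range over all of $A$ and $L$ over all of $U$, and since the $H$-action on $A$ and on $\mathrm{end}(V)$ merely permutes these ranges, universal quantification is preserved. This is the only step that is not entirely immediate, but it is routine and requires no additional ingredient beyond the fact that $\Phi^{(-(n+1))}$ is an isomorphism. With both directions of Lemma \ref{lem:evcurry} (ii) in hand, the biconditional in the statement follows at once.
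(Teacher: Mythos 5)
Your proposal is correct and follows essentially the same route as the paper's own proof: apply Lemma~\ref{lem:evcurry}~(ii) to the morphism $f = [\,\cdot\,,\,\cdot\,]^{(n+1)}\circ\Phi^{(-(n+1))}$ and the inclusion $j$, then use that $\Phi^{(-(n+1))}$ is an isomorphism (preserving the subobjects $U$ and $A$) to identify the uncurried condition with the vanishing of the nested commutators. Your extra remark about the scattered associator factors is exactly the observation the paper compresses into ``the assertion now follows because $\Phi^{(-(n+1))}$ is an isomorphism.''
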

\begin{proof}
Denoting by $f := [\, \cdot\, , \,\cdot\, ]^{(n + 1)} \circ \Phi^{(-(n + 1))} : 
\mathrm{end}(V)\otimes A^{\otimes n} \to \mathrm{end}(V)$
and $j : U \to \mathrm{end}(V) $ the inclusion ${}^H\MMM$-morphism,
it follows from Lemma~\ref{lem:evcurry} (ii) that 
$\zeta(f) \circ j =0$ if and only if $f\circ (j\otimes \id)=0$. 
The latter condition is equivalent to $[\, \cdot\, , \,\cdot\, ]^{(n + 1)}\circ 
\big((\cdots ((j \otimes \id)\otimes \id) \cdots ) \otimes \id \big) \circ  \Phi^{(-(n + 1))}= 0$,
and the assertion now follows because $\Phi^{(-(n + 1))}$ is an isomorphism.
\end{proof}

There is an ${}^H\MMM$-subobject relation
$\mathrm{diff}^n(V) \subseteq \mathrm{diff}^m(V) $ for all $n\leq m$, which
immediately follows from Lemma~\ref{lem:diff} and (\ref{eqn:diff}).
These subobject relations give rise to the sequence of ${}^H\MMM$-monomorphisms
\begin{flalign}\label{eqn:inclusions}
\xymatrix{
\mathrm{diff}^0(V) \ar[r] & \mathrm{diff}^1(V) \ar[r] & \mathrm{diff}^2(V) \ar[r] 
&~\cdots~ \ar[r] & \mathrm{diff}^n(V) \ar[r] &~\cdots \ .
}
\end{flalign}

We shall now show that differential operators can be composed with respect to the internal composition.
\begin{propo}\label{propo:diffnbullet}
The internal composition $\bullet : \mathrm{end}(V)\otimes \mathrm{end}(V)\to\mathrm{end}(V)$ 
restricts to an $^H\MMM$-morphism
\begin{flalign}
\bullet: \mathrm{diff}^n(V) \otimes \mathrm{diff}^m(V) \longrightarrow \mathrm{diff}^{n+m}(V)~,
\end{flalign}
for all $n,m \in \bbZ_{\geq0}$.
\end{propo}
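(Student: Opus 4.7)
The plan is to reduce the claim to a concrete identity of internal multi-commutators via Lemma~\ref{lem:diff}, and then to establish that identity by induction on the total order $N = n+m$, using the braided derivation property of the internal commutator recorded in Proposition~\ref{prop:bracket}(iii). Since the internal composition $\bullet$ is already an ${}^H\MMM$-morphism on $\mathrm{end}(V)\otimes\mathrm{end}(V)$ and the tensor product $\mathrm{diff}^n(V)\otimes \mathrm{diff}^m(V)$ is an ${}^H\MMM$-subobject of $\mathrm{end}(V)\otimes\mathrm{end}(V)$, the image of $\bullet$ restricted to this tensor product is automatically an ${}^H\MMM$-subobject $U\subseteq\mathrm{end}(V)$. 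By Lemma~\ref{lem:diff}, it then suffices to verify
\begin{flalign*}
\big[\big[\cdots \big[ [L\bullet L' ,a_1],a_2 \big], \cdots \big],a_{n+m+1}\big] = 0
\end{flalign*}
for all homogeneous $L\in\mathrm{diff}^n(V)$, $L'\in\mathrm{diff}^m(V)$ and $a_1,\dots,a_{n+m+1}\in A$.

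For the base case $N=0$ we have $L,L'\in\mathrm{diff}^0(V)=\mathrm{end}_A(V)$, so Lemma~\ref{lem:homA} gives $[L,a]=[L',a]=0$ for every $a\in A$; because $\mathrm{diff}^0(V)$ is $H$-stable the same identity holds after any $H$-action, so both summands on the right-hand side of the braided derivation property applied to $[L\bullet L',a_1]$ vanish identically. Hence $L\bullet L'\in\mathrm{diff}^0(V)$. For the inductive step, adopt the convention $\mathrm{diff}^{-1}(V)=0$ and note the auxiliary observation that the internal commutator restricts to an ${}^H\MMM$-morphism
\begin{flalign*}
[\,\cdot\,,\,\cdot\,] : \mathrm{diff}^k(V)\otimes A \longrightarrow \mathrm{diff}^{k-1}(V) ~,
\end{flalign*}
which is immediate from Lemma~\ref{lem:diff} and the definition (\ref{eqn:diff}) of $\mathrm{diff}^k(V)$. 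Assuming the statement of the proposition holds for all pairs of total order at most $N-1$, we fix $L\in\mathrm{diff}^n(V)$, $L'\in\mathrm{diff}^m(V)$ with $n+m=N\geq 1$ and apply Proposition~\ref{prop:bracket}(iii) with $L''=\widehat{l}(a_1)$ to write $[L\bullet L',a_1]$ as a sum of two terms of the form $\widetilde{L}\bullet\widetilde{L}'$, where in the first term $\widetilde{L}\in\mathrm{diff}^n(V)$ and $\widetilde{L}'\in\mathrm{diff}^{m-1}(V)$, and in the second term $\widetilde{L}\in\mathrm{diff}^{n-1}(V)$ and $\widetilde{L}'\in\mathrm{diff}^m(V)$. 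The induction hypothesis places each summand in $\mathrm{diff}^{n+m-1}(V)$, hence $[L\bullet L',a_1]\in\mathrm{diff}^{n+m-1}(V)$, and iterating this $n+m$ more times with the successive $a_i$'s collapses the expression to zero. By Lemma~\ref{lem:diff} this gives $L\bullet L'\in\mathrm{diff}^{n+m}(V)$, as required.

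The main obstacle is bookkeeping: the braided derivation identity produces elements twisted by both the associator $\phi$ and the $R$-matrix (and an auxiliary associator $\widetilde{\phi}$), and one must be sure that such $H$-twisted versions of $L$, $L'$ and $a_i$ still belong to the appropriate $\mathrm{diff}^k(V)$ subobjects. This is automatic, however, because each $\mathrm{diff}^k(V)$ is by construction an ${}^H\MMM$-subobject of $\mathrm{end}(V)$, and hence stable under every $H$-action; no explicit associator or $R$-matrix gymnastics beyond the identity of Proposition~\ref{prop:bracket}(iii) is needed.
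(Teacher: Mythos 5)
Your proposal is correct and follows essentially the same route as the paper: reduce via Lemma~\ref{lem:diff} to the vanishing of the $(n+m+1)$-fold iterated commutator of $L\bullet L'$, then apply the braided derivation property of Proposition~\ref{prop:bracket}(iii) repeatedly together with Lemma~\ref{lem:diff} for $L$ and $L'$. The paper leaves the iteration implicit; your induction on $n+m$ (with the auxiliary observation $[\,\cdot\,,\,\cdot\,]:\mathrm{diff}^k(V)\otimes A\to\mathrm{diff}^{k-1}(V)$ and the remark on $H$-stability of the subobjects) is just a careful spelling-out of the same argument.
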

\begin{proof}
Restricting $\bullet : \mathrm{end}(V)\otimes \mathrm{end}(V)\to\mathrm{end}(V)$ 
to the corresponding ${}^H\MMM$-subobjects of differential operators yields
an $^H\MMM$-morphism $\bullet: \mathrm{diff}^n(V) \otimes \mathrm{diff}^m(V) \to \mathrm{end}(V)$
and we have to prove that its image lies in $\mathrm{diff}^{n+m}(V)$. As the image of this $^H\MMM$-morphism
is an ${}^H\MMM$-subobject of $\mathrm{end}(V)$, by Lemma~\ref{lem:diff} it is enough to show that
\begin{flalign}
\big[\big[\cdots \big[ [L\bullet L^\prime ,a_1],a_2 \big], \cdots \big],a_{n + m+1}\big] = 0~,
\end{flalign}
for all $L\in \mathrm{diff}^n(V)$, $L^\prime\in \mathrm{diff}^m(V)$ and
$a_1,a_2, \dots ,a_{n + m+1} \in A$. This equality follows by iteratively using the
derivation property of the internal commutator, 
cf.\ item (iii) of Proposition~\ref{prop:bracket}, and applying Lemma~\ref{lem:diff} to $L$ and $L^\prime$.
\end{proof}

Forming the colimit in ${}^H\MMM$ of the diagram given in (\ref{eqn:inclusions}) 
we can define the object  $\mathrm{diff}(V)$ of differential operators on $V$.
This colimit can be represented explicitly as the union of differential operators of all orders $n \in\bbZ_{\geq0}$, i.e.\
\begin{flalign}
\mathrm{diff}(V) = \bigcup_{n \in\bbZ_{\geq0}} \, \mathrm{diff}^n(V) \subseteq \mathrm{end}(V) ~.
\end{flalign}
\begin{cor}\label{cor:diffalg}
The differential operators $\mathrm{diff}(V)$ is an ${}^H\AAA$-subobject of the algebra 
of internal endomorphisms $\mathrm{end}(V)$ (cf.\ Example \ref{defi:internalendalgebra}).
\end{cor}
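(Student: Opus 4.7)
My plan is to verify the two algebra axioms for the canonical ${}^H\MMM$-inclusion $\mathrm{diff}(V)\hookrightarrow \mathrm{end}(V)$: closure under the internal composition $\bullet$, and factorization through it of the unit ${}^H\MMM$-morphism $\eta:I\to\mathrm{end}(V)$, $c\mapsto c\,(\beta\ra\,\cdot\,)$.

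Closure under the product is essentially immediate from Proposition~\ref{propo:diffnbullet}, which already supplies graded pieces $\bullet:\mathrm{diff}^n(V)\otimes\mathrm{diff}^m(V)\to\mathrm{diff}^{n+m}(V)\subseteq\mathrm{diff}(V)$ for all $n,m\in\bbZ_{\geq 0}$. Since ${}^H\MMM$ is closed monoidal, the tensor product preserves colimits in each slot, so $\mathrm{diff}(V)\otimes\mathrm{diff}(V)$ is itself the filtered colimit of the $\mathrm{diff}^n(V)\otimes\mathrm{diff}^m(V)$. By the universal property of this colimit the graded restrictions glue to a unique ${}^H\MMM$-morphism $\bullet:\mathrm{diff}(V)\otimes\mathrm{diff}(V)\to\mathrm{diff}(V)$, which by construction is the restriction of the internal composition on $\mathrm{end}(V)$.

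For the unit axiom it suffices to check that $1:=\eta(1_k)=(\beta\ra\,\cdot\,)$ lies in $\mathrm{diff}^0(V)$. By Remark~\ref{rem:diff0} one has $\mathrm{diff}^0(V)=\mathrm{end}_A(V)$, and Lemma~\ref{lem:homA} (applied with $V=W$) reduces the problem to the vanishing condition $[1,a]=0$ for all $a\in A$, with the bracket given by~(\ref{eqn:brackethomVW}). Now $1$ is the image under an ${}^H\MMM$-morphism of the generator of the unit object $I$, so it is homogeneous of degree $|1|=0$ and $H$-invariant in the sense $h\ra 1=\epsilon(h)\,1$. Expanding
\begin{flalign}
[1,a] \;=\; 1\bullet\widehat{l}(a)\;-\;\widehat{l}\big(R^{(2)}\ra a\big)\bullet\big(R^{(1)}\ra 1\big)\nn
\end{flalign}
and using both the unit property of $1$ in the algebra $\mathrm{end}(V)$ and the normalization $(\epsilon\otimes\id)(R)=1$, both terms collapse to $\widehat{l}(a)$ and cancel.

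I do not anticipate a serious obstacle. The only delicate point is the compatibility between the tensor product and the filtered colimit defining $\mathrm{diff}(V)$, needed to promote Proposition~\ref{propo:diffnbullet} from the graded pieces to the full object; but this is automatic since $\otimes$ has a right adjoint $\hom$ in the closed monoidal category ${}^H\MMM$.
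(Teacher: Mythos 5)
Your proof is correct and follows essentially the same route as the paper: closure of $\bullet$ on $\mathrm{diff}(V)$ via Proposition~\ref{propo:diffnbullet}, and membership of the unit in $\mathrm{diff}^0(V)$ via the computation $[1,a]=1\bullet\widehat{l}(a)-\widehat{l}\big(R^{(2)}\,\epsilon(R^{(1)})\ra a\big)=0$ using the normalization of the $R$-matrix together with Lemma~\ref{lem:diff}. Your extra remark on the compatibility of $\otimes$ with the filtered colimit defining $\mathrm{diff}(V)$ is a detail the paper leaves implicit, but it does not change the argument.
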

\begin{proof}
By Proposition~\ref{propo:diffnbullet} the internal composition closes on $\mathrm{diff}(V)$, 
i.e.\ there is an ${}^H\MMM$-morphism
\begin{flalign}
\bullet: \mathrm{diff}(V) \otimes \mathrm{diff}(V) \longrightarrow \mathrm{diff}(V)~.
\end{flalign}
The unit $\eta : I\to \mathrm{end}(V)$ has its image in the degree
$0$ differential operators
because of the calculation
\begin{flalign}
[c\,1,a] = c\,1\bullet \widehat{l}(a) - c\, \widehat{l}\big(R^{(2)}\ra a\big) \bullet \big(R^{(1)}\ra 1\big)
= c\, \widehat{l}(a) - c\, \widehat{l}\big(R^{(2)}\, \epsilon(R^{(1)})\ra a\big) =0
\end{flalign}
and Lemma~\ref{lem:diff}; here we used the normalization $(\epsilon\otimes \id)(R)=1$ of the $R$-matrix.
\end{proof}
\begin{rem}\label{rem:diff}
Combining Lemmas~\ref{lem:der} and~\ref{lem:diff} we see that for any object $A$ in ${}^H\AAA^{\mathrm{com}}$,
$ \mathrm{der}(A) \subseteq \mathrm{diff}^1(A) $ is an ${}^H\MMM$-subobject,
i.e.\ the derivations of $A$ are differential operators of order $1$.
\end{rem}

With the techniques developed above we can now introduce
the notion of a differential calculus in ${}^H\MMM$.
In the following we shall denote by $I[1]$ the object in ${}^H\MMM$
which is obtained by shifting the unit object $I = (k,\ra)$ in $\bbZ$-degree by $1$:
$I[1]_1 = k$ and $I[1]_n =0$, for all $n\neq 1$.
\begin{defi}\label{defi:diffcalc}
Let $H$ be a quasitriangular quasi-Hopf algebra.
A differential calculus $(A,\dd)$ in ${}^H\MMM$ is an object $A$ in ${}^H\AAA^{\mathrm{com}}$
together with an ${}^H\MMM$-morphism $\dd : I[1]\to  \mathrm{der}(A)$ 
which is nilpotent in the sense that the composition of ${}^H\MMM$-morphisms
\begin{flalign}
\xymatrix{
I[1]\otimes I[1] \ar[r]^-{\dd\otimes \dd}& \mathrm{der}(A)\otimes \mathrm{der}(A)
\ar[r]^-{}& \mathrm{diff}(A)\otimes \mathrm{diff}(A)\ar[r]^-{\bullet} & \mathrm{diff}(A)
}
\end{flalign}
is $0$; here the second arrow is defined using Remark \ref{rem:diff}.
\end{defi}
\begin{rem}
Given a differential calculus $(A,\dd)$ in ${}^H\MMM$ there is a distinguished
$H$-invariant derivation of $\bbZ$-degree $1$, which is given by
$\dd(1)\in \mathrm{der}(A)$ and is called the differential.
\end{rem}
\begin{ex}\label{ex:classicaldiffcalc}
Building upon Example \ref{ex:classicalGMan}, examples
of differential calculi are provided by the exterior algebras of
differential forms $\Omega^\sharp(M)$ on $G$-manifolds $M$, equipped with the de~Rham differential,
and cochain twist quantizations thereof. See Proposition~\ref{propo:diffcalctwisting}
for details on the twist deformation quantization of differential calculi.
\end{ex}


\subsection{Cochain twisting of differential operators and calculi}
The cochain twist deformation quantization functor preserves 
differential operators and differential calculi. 
\begin{propo}\label{propo:diffcochain}
Let $A$ be any object in  ${}^H\AAA^{\mathrm{com}}$,
$V$ any object in $ {}^H_{}{}^{}_{A}\MMM^{\mathrm{sym}}_{A} $ and $F$ any
cochain twisting element based on $H$. Then the coherence map 
$\gamma: \mathrm{end}_F(\FF(V))\to \FF(\mathrm{end}(V))$ restricts to an ${}^{H_F}_{}\MMM$-isomorphism
\begin{flalign}
\gamma: \mathrm{diff}^n_F(\FF(V)) \longrightarrow \FF(\mathrm{diff}^n(V)) ~,
\end{flalign}
for all $n\in\bbZ_{\geq0}$.
\end{propo}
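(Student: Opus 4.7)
The plan is to run the same argument as in Proposition \ref{propo:cochainder}, replacing the equalizer that defines derivations by the one in Definition \ref{defi:diffop} that defines differential operators of order $n$. Since the cochain twist functor $\FF : {}^H\MMM \to {}^{H_F}\MMM$ is a braided closed monoidal equivalence, it preserves all limits and colimits; in particular, applying $\FF$ to the equalizer diagram defining $\mathrm{diff}^n(V)$ shows that $\FF(\mathrm{diff}^n(V))$ is the equalizer in ${}^{H_F}\MMM$ of
\begin{flalign}
\xymatrix{
\FF(\mathrm{end}(V)) \ar@<-1ex>[rrr]_-{0} \ar@<1ex>[rrrr]^-{\FF\big(\zeta([\,\cdot\,,\,\cdot\,]^{(n+1)}\circ \Phi^{(-(n+1))})\big)} &&&& \FF\big(\hom(A^{\otimes n},\mathrm{end}(V))\big) \ .
}
\end{flalign}
On the other hand, applying Definition \ref{defi:diffop} directly in ${}^{H_F}\MMM$ to the object $\FF(V)$ in ${}^{H_F}_{}{}^{}_{\FF(A)}\MMM^{\mathrm{sym}}_{\FF(A)}$ exhibits $\mathrm{diff}^n_F(\FF(V))$ as the equalizer of the analogous diagram built from the twisted internal commutator $[\,\cdot\,,\,\cdot\,]_F$, the twisted currying $\zeta_F$, the twisted associator $\Phi_F^{(-(n+1))}$, and the twisted evaluation $\ev_F$ hidden in the twisted $\widehat{l}_F$.

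Next I would construct an isomorphism between these two parallel pairs of arrows whose vertical components are the coherence maps. On the source side the vertical arrow is $\gamma : \mathrm{end}_F(\FF(V)) \to \FF(\mathrm{end}(V))$. On the target side the vertical arrow is the composition of $\gamma$ applied to $\hom_F$ with postcomposition by $\gamma$ in the second argument, which gives an ${}^{H_F}\MMM$-isomorphism
\begin{flalign}
\hom_F\big(\FF(A)^{\otimes_F n},\mathrm{end}_F(\FF(V))\big) \longrightarrow \FF\big(\hom(A^{\otimes n},\mathrm{end}(V))\big)~,
\end{flalign}
after identifying $\FF(A)^{\otimes_F n}$ with $\FF(A^{\otimes n})$ by iterated use of the monoidal coherence map $\varphi$. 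Commutativity of the resulting square, with the two pairs of horizontal arrows, amounts to the statement that $\gamma$ and $\varphi$ intertwine the twisted multi-commutator with the image under $\FF$ of the untwisted multi-commutator.

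This commutativity is the main technical point, and it is really a consequence of the compatibility of $\gamma$ and $\varphi$ with the internal evaluation, composition, and the associator that is established in the constructions of \cite{Barnes:2014}. Concretely, one shows by induction on $n$ that $\gamma$ intertwines $\bullet_F$ with $\FF(\bullet)$ and $\ev_F$ with $\FF(\ev)$ up to the appropriate tensor coherence $\varphi$, so the same holds for the single internal commutator $[\,\cdot\,,\,\cdot\,]_F$ and, iterating, for $[\,\cdot\,,\,\cdot\,]^{(n+1)}_F \circ \Phi^{(-(n+1))}_F$. Currying this identity then yields the commuting square, and the hardest part of the argument is keeping track of the associator insertions $\Phi^{(-(n+1))}$ through this iteration.

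Once commutativity is in place, universality of equalizers supplies a unique ${}^{H_F}\MMM$-iso\-mor\-phism $\mathrm{diff}^n_F(\FF(V)) \to \FF(\mathrm{diff}^n(V))$ compatible with the vertical maps. Because both objects are realized (via Lemma \ref{lem:diff} and the explicit kernel description \eqref{eqn:diff}) as ${}^{H_F}\MMM$-subobjects of $\mathrm{end}_F(\FF(V))$ and $\FF(\mathrm{end}(V))$ respectively, this unique isomorphism is forced to be the restriction of the coherence map $\gamma : \mathrm{end}_F(\FF(V)) \to \FF(\mathrm{end}(V))$, exactly as in the derivations case. Applying the colimit-preservation of $\FF$ to \eqref{eqn:inclusions} then also yields a corresponding ${}^{H_F}\MMM$-isomorphism $\gamma : \mathrm{diff}_F(\FF(V)) \to \FF(\mathrm{diff}(V))$, and the same argument applied to the equalizer defining a differential calculus (Definition \ref{defi:diffcalc}) shows that $\FF$ maps differential calculi to differential calculi, which disposes of the second half of the claim about calculi.
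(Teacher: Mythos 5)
Your proof is correct and takes essentially the same route as the paper, which simply states that the argument of Proposition~\ref{propo:cochainder} carries over verbatim with the equalizer of Definition~\ref{defi:diffop} in place of the one defining derivations; your spelled-out version of the commuting square, the appeal to preservation of limits by the equivalence $\FF$, and the identification of the induced isomorphism with the restriction of $\gamma$ via the subobject realization \eqref{eqn:diff} is exactly that argument. One small caveat: your closing sentence about differential calculi is extraneous to this proposition (that statement is Proposition~\ref{propo:diffcalctwisting}, proved separately), and a differential calculus is not defined by an equalizer in this paper but by a nilpotency condition, so that remark should be dropped rather than folded into this proof.
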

\begin{proof}
The proof is analogous to the proof of Proposition~\ref{propo:cochainder}.
\end{proof}
\begin{propo}\label{propo:diffcalctwisting}
Let $\big(A, \dd : I[1]\to \mathrm{der}(A)\big)$ be any differential 
calculus in $^H\MMM$ and let $F$ be any cochain twisting element based on $H$. 
Then $\FF(A)$ together with the ${}^{H_F}\MMM$-morphism
\begin{flalign}\label{eqn:twisteddifferential}
\dd_F := \gamma^{-1}\circ \FF(\dd) \circ \psi : I_F[1]\longrightarrow \mathrm{der}_F(\FF(A))
\end{flalign}
is a differential calculus in $^{H_F}\MMM$, where $\psi$ is the coherence morphism in (\ref{eqn:coherencemapstensor2}).
\end{propo}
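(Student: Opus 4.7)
The plan is to check the two conditions that define a differential calculus according to Definition \ref{defi:diffcalc}: first, that $\dd_F$ is a well-defined ${}^{H_F}\MMM$-morphism with the correct source and target, and second, that it satisfies the nilpotency condition. The first point is essentially a bookkeeping exercise: since $\FF$ takes ${}^H\MMM$-morphisms to ${}^{H_F}\MMM$-morphisms, $\FF(\dd) : \FF(I[1]) \to \FF(\mathrm{der}(A))$ is an ${}^{H_F}\MMM$-morphism, and precomposing with (the degree-shifted version of) the coherence map $\psi : I_F[1] \to \FF(I[1])$ together with postcomposing with the isomorphism $\gamma^{-1} : \FF(\mathrm{der}(A)) \to \mathrm{der}_F(\FF(A))$ provided by Proposition \ref{propo:cochainder} yields the required ${}^{H_F}\MMM$-morphism $\dd_F : I_F[1] \to \mathrm{der}_F(\FF(A))$. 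Note that $\FF(A)$ is already known to be an object in ${}^{H_F}\AAA^{\mathrm{com}}$ by~\cite[Proposition~5.16]{Barnes:2014}, so this makes $(\FF(A),\dd_F)$ a candidate differential calculus.

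For the nilpotency condition I would exploit the fact that $\FF : {}^H\MMM \to {}^{H_F}\MMM$ is a braided closed monoidal functor, so it sends the nilpotency composition for $\dd$ (which is zero by assumption) to a zero composition in ${}^{H_F}\MMM$, namely
\begin{flalign}
\FF(\bullet) \circ \FF(\iota \otimes \iota) \circ \FF(\dd \otimes \dd) \circ \varphi^{-1} = 0 : I_F[1]\otimes_F I_F[1] \longrightarrow \FF(\mathrm{diff}(A))~,
\end{flalign}
where $\iota : \mathrm{der}(A) \to \mathrm{diff}(A)$ denotes the inclusion of Remark \ref{rem:diff} and $\varphi$ is the coherence map of (\ref{eqn:coherencemapstensor}). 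The strategy is then to identify this with the twisted nilpotency composition appearing in Definition \ref{defi:diffcalc} applied to $(\FF(A),\dd_F)$ by means of the coherence isomorphisms. Concretely, Proposition \ref{propo:diffcochain} together with passing to the colimit defining $\mathrm{diff}$ furnishes an ${}^{H_F}\MMM$-isomorphism $\gamma : \mathrm{diff}_F(\FF(A)) \to \FF(\mathrm{diff}(A))$, and analogous coherence relations intertwine the inclusion $\mathrm{der}_F(\FF(A)) \hookrightarrow \mathrm{diff}_F(\FF(A))$ with $\FF(\iota)$ (via $\gamma$) and the internal composition $\bullet_F$ with $\FF(\bullet)$ (again via $\gamma$ and $\varphi$). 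Combining these identifications, the twisted nilpotency diagram for $\dd_F$ becomes an isomorphic copy of the $\FF$-image of the untwisted nilpotency diagram for $\dd$, and hence vanishes.

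The main obstacle I anticipate is verifying the compatibility of the coherence maps with the internal composition $\bullet$ and with the subobject inclusion $\mathrm{der} \hookrightarrow \mathrm{diff}$, i.e.\ establishing commutative squares of the form
\begin{flalign}
\gamma \circ \bullet_F = \FF(\bullet)\circ (\gamma\otimes_F\gamma)\circ \varphi^{-1}\qquad\text{and}\qquad \gamma\circ \iota_F = \FF(\iota)\circ \gamma~,\nn
\end{flalign}
the first at the level of differential operators and the second between the characterizations of $\mathrm{der}_F(\FF(A))$ and $\mathrm{diff}^1_F(\FF(A))$ as subobjects. Both follow from the fact that $\bullet$ and the equalizers defining $\mathrm{der}$ and $\mathrm{diff}^n$ are constructed purely from currying, evaluation and the internal commutator, all of which are compatible with $\FF$ by the same argument used in the proof of Proposition \ref{propo:cochainder} (and indicated in Proposition \ref{propo:diffcochain}). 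Once these coherence statements are in hand, the nilpotency of $\dd_F$ follows immediately from the nilpotency of $\dd$ and the injectivity of the relevant coherence isomorphisms, completing the proof.
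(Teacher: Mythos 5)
Your argument is correct and reaches the same conclusion, but the paper's proof of nilpotency is considerably more economical: it simply computes $\dd_F(c)\bullet_F\dd_F(c^\prime\,)$ on elements, using the explicit formula $L\bullet_F L^\prime = \gamma^{-1}\big((F^{(-1)}\ra\gamma(L))\bullet(F^{(-2)}\ra\gamma(L^\prime\,))\big)$ from Part~I (this is precisely the coherence square you flag as your ``main obstacle''), the $H$-equivariance of $\dd$ to pull the twist legs $F^{(-1)},F^{(-2)}$ inside $\dd$, and the normalization $(\epsilon\otimes\id)(F)=1=(\id\otimes\epsilon)(F)$ to remove them, since they then act on $I[1]$ through the counit. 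Your diagrammatic route --- push the vanishing untwisted nilpotency composition through the braided monoidal functor $\FF$ and intertwine it with the twisted one via $\gamma$, $\varphi$ and $\psi$ --- packages exactly the same content, with the normalization of $F$ hidden in the identification of $I_F[1]\otimes_F I_F[1]$ with $\FF(I[1]\otimes I[1])$; it buys an element-free proof at the cost of having to verify the compatibility of $\gamma$ with $\bullet$ and with the inclusion $\mathrm{der}(A)\hookrightarrow\mathrm{diff}(A)$, both of which are indeed available by the arguments you cite. Two small corrections: your coherence square should read $\gamma\circ\bullet_F = \FF(\bullet)\circ\varphi\circ(\gamma\otimes_F\gamma)$ (the version with $(\gamma\otimes_F\gamma)\circ\varphi^{-1}$ does not typecheck, since $\varphi^{-1}$ has target $\FF(\text{--})\otimes_F\FF(\text{--})$ rather than the source of $\gamma\otimes_F\gamma$), and in your first display the map from $I_F[1]\otimes_F I_F[1]$ into $\FF(I[1]\otimes I[1])$ is $\varphi\circ(\psi\otimes_F\psi)$, not $\varphi^{-1}$.
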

\begin{proof}
By Proposition~\ref{propo:cochainder}, the target 
of $\dd_F$ is as claimed in (\ref{eqn:twisteddifferential}). Moreover, $\dd_F$
is nilpotent (in $\mathrm{diff}_F(\FF(A))$) because of the short calculation
\begin{flalign}
\nn \dd_F(c)\bullet_F\dd_F(c^\prime\, ) &= \gamma^{-1}\left(\big(F^{(-1)}\ra \gamma(\dd_F(c))\big)\bullet 
\big(F^{(-2)}\ra \gamma(\dd_F(c^\prime\, ))\big)\right) \\[4pt]
\nn &= \gamma^{-1}\left(\dd\big(F^{(-1)}\ra c\big)\bullet 
\dd \big(F^{(-2)}\ra c^\prime\, \big)\right)\\[4pt]
&=\gamma^{-1}\big(\dd(c)\bullet \dd(c^\prime\, ) \big)=0~,
\end{flalign}
for all $c,c^\prime\in I[1]$.
In the first equality we have used~\cite[Proposition~2.16]{Barnes:2014}, in the second equality
the definition of $\dd_F$ and the $H$-equivariance of $\dd$, in the third
equality the normalization of the cochain twist, and in the last equality the nilpotency of $\dd$.
\end{proof}


\section{\label{sec:connections}Connections}
For a given differential calculus $(A, \dd)$ in $^H\MMM$, we shall
develop the notion of connections on objects in $ {}^H_{}{}^{}_{A}\MMM^{\mathrm{sym}}_{A} $
by again using universal constructions in the category ${}^H\MMM$. We will show that
connections of objects $V,W$ in $ {}^H_{}{}^{}_{A}\MMM^{\mathrm{sym}}_{A} $ can be
canonically lifted to connections on the tensor product object $V\otimes_A W$ and
on the internal hom-object $\hom_A(V,W)$.
Throughout this section $H$ is an arbitrary quasitriangular quasi-Hopf algebra.


\subsection{Connections on symmetric bimodules}
Let $\big(A, \dd : I[1]\to \mathrm{der}(A)\big)$ be a differential calculus 
in $^H\MMM$. Connections on an object $V$ in ${}^H_{}{}^{}_{A}\MMM^{\mathrm{sym}}_{A}$
are distinguished differential operators of order $1$ which satisfy a Leibniz rule 
with respect to the $^H\MMM$-morphism $\dd$. We shall again formalize this algebraic property in terms
of an equalizer in ${}^H\MMM$.
Denoting by $\times$ the categorical product in $^H\MMM$ and recalling
that $I[1]$ denotes the shifted unit object in ${}^H\MMM$, there are
two parallel ${}^H\MMM$-morphisms
\begin{flalign}\label{eqn:conarrows}
\xymatrix{
\big(\mathrm{end}(V)\times I[1]\big)\otimes A 
\ar@<-1ex>[rrrr]_-{\widehat{l}\circ \ev \circ (\dd\otimes\id)\circ (\mathrm{pr}_2\otimes \id)} 
\ar@<1ex>[rrrr]^-{[\,\cdot\,,\,\cdot\,]\circ (\mathrm{pr}_1\otimes \id)} 
 &&&& \mathrm{end}(V)}~,
\end{flalign}
where $\mathrm{pr}_{1} : \mathrm{end}(V)\times I[1]\to \mathrm{end}(V)$
and $\mathrm{pr}_2: \mathrm{end}(V)\times I[1] \to I[1]$ are the projection ${}^H\MMM$-morphisms.
The upper arrow in (\ref{eqn:conarrows}) is the mapping
\begin{subequations}
\begin{flalign}
(L,c)\otimes a \longmapsto  [L,a]
\end{flalign}
and the lower arrow is the mapping
\begin{flalign}
(L,c)\otimes a \longmapsto \widehat{l}\big(\ev(\dd(c)\otimes a)\big)~.
\end{flalign}
\end{subequations}
\begin{defi}
Let $(A,\dd)$ be a differential calculus in $^H\MMM$ and $V$ any object in
${}^{H}_{}{}^{}_{\bol{A}}\MMM_{\bol{A}}^{\mathrm{sym}}$.
The connections of $V$ is the object $\mathrm{con}(V)$ 
in ${}^H\MMM$ which is defined by the equalizer
\begin{flalign}\label{eqn:k1connections}
\xymatrix{
\mathrm{con}(V)\ar[rr]&& \mathrm{end}(V) \times I[1] \ar@<-1ex>[rrrr]_-{\zeta(\widehat{l}\circ \ev \circ (\dd\otimes\id)\circ (\mathrm{pr}_2\otimes \id))} \ar@<1ex>[rrrr]^-{\zeta([\,\cdot\,,\,\cdot\,]\circ (\mathrm{pr}_1\otimes \id))}  &&&& \hom(A, \mathrm{end}(V))
}
\end{flalign}
in ${}^H\MMM$. This equalizer can be realized explicitly in terms of the ${}^H\MMM$-subobject
\begin{flalign}
\mathrm{con}(V) = \mathrm{Ker}\Big(\zeta \big([\,\cdot\,,\,\cdot\,]\circ (\mathrm{pr}_1\otimes \id) -
\widehat{l}\circ \ev \circ (\dd\otimes\id)\circ (\mathrm{pr}_2\otimes \id) \big) \Big)~
\end{flalign}
of the object $\mathrm{end}(V)\times I[1]$ in ${}^H\MMM$.
\end{defi}

\begin{lem}\label{lem:con}
Let $(A,\dd)$ be any differential calculus in ${}^H\MMM$ 
and let $V$ be any object in $ {}^{H}_{}{}^{}_{\bol{A}}\MMM_{\bol{A}}^{\mathrm{sym}} $. 
An ${}^H\MMM$-subobject $U \subseteq \mathrm{end}(V)\times I[1]$ 
is an ${}^H\MMM$-subobject of $\mathrm{con}(V)$ if and only if
\begin{flalign}\label{eqn:concondition}
[L, a] = \widehat{l}\big(\ev(\dd(c) \otimes a) \big)~,
\end{flalign}
for all $(L,c) \in U$ and $a\in A$.
\end{lem}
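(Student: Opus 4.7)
My plan mirrors exactly the strategy used in Lemma~\ref{lem:der}, Lemma~\ref{lem:homA}, and Lemma~\ref{lem:diff}, since the object $\mathrm{con}(V)$ is defined by the same equalizer pattern. The only new wrinkle is that the ambient object is a categorical product $\mathrm{end}(V)\times I[1]$ rather than a single internal endomorphism object, but this plays no essential role.

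First I would abbreviate the two parallel ${}^H\MMM$-morphisms from (\ref{eqn:conarrows}) by setting
\begin{flalign*}
f := [\,\cdot\,,\,\cdot\,]\circ (\mathrm{pr}_1\otimes \id) \;-\; \widehat{l}\circ \ev \circ (\dd\otimes\id)\circ (\mathrm{pr}_2\otimes \id) \,:\, \big(\mathrm{end}(V)\times I[1]\big)\otimes A \longrightarrow \mathrm{end}(V)~,
\end{flalign*}
so that by construction $\mathrm{con}(V) = \mathrm{Ker}(\zeta(f))$. Then, writing $j : U \to \mathrm{end}(V)\times I[1]$ for the inclusion ${}^H\MMM$-morphism of the given subobject, $U$ is an ${}^H\MMM$-subobject of $\mathrm{con}(V)$ if and only if $\zeta(f)\circ j = 0$.

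Next I would invoke item (ii) of Lemma~\ref{lem:evcurry} to convert this into the equivalent statement $f\circ (j\otimes \id) = 0$. Unpacking the definition of $f$, this is precisely the equation
\begin{flalign*}
[L,a] - \widehat{l}\big(\ev(\dd(c)\otimes a)\big) = 0
\end{flalign*}
holding for all $(L,c)\in U$ and $a\in A$, which is exactly condition (\ref{eqn:concondition}).

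There is essentially no obstacle here — the whole proof is one application of Lemma~\ref{lem:evcurry}~(ii) — so the main point is just to identify $f$ and $j$ correctly and to state the result as a one-line reference to that lemma, following the model of the earlier three lemmas. I would therefore write the proof as a single short paragraph: \emph{``Denoting by $f$ the difference of the two parallel morphisms in (\ref{eqn:conarrows}) and by $j: U\to \mathrm{end}(V)\times I[1]$ the inclusion ${}^H\MMM$-morphism, we have to show that $\zeta(f)\circ j = 0$ if and only if $f\circ (j\otimes \id) = 0$. This is a consequence of item (ii) of Lemma~\ref{lem:evcurry}.''}
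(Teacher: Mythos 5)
Your proposal is correct and coincides with the paper's own proof: the paper likewise sets $f$ to be the difference of the two parallel morphisms in (\ref{eqn:conarrows}), takes $j:U\to \mathrm{end}(V)\times I[1]$ to be the inclusion, and reduces the claim to $\zeta(f)\circ j=0 \Leftrightarrow f\circ(j\otimes\id)=0$ via Lemma~\ref{lem:evcurry}~(ii). Nothing is missing.
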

\begin{proof}
Denoting by $f := [\,\cdot\,,\,\cdot\,]\circ (\mathrm{pr}_1\otimes \id) -
\widehat{l}\circ \ev \circ (\dd\otimes\id)\circ (\mathrm{pr}_2\otimes \id) : 
(\mathrm{end}(V)\times I[1])\otimes A \to \mathrm{end}(V)$
and $j : U \to \mathrm{end}(V)\times I[1] $ the inclusion ${}^H\MMM$-morphism,
we have to show that $\zeta(f) \circ j =0$ if and only if $f\circ (j\otimes \id)=0$.
This is a consequence of item (ii) of Lemma~\ref{lem:evcurry}.
\end{proof}
\begin{rem}
By Lemma~\ref{lem:con}, any element $(L,c)\in \mathrm{con}(V)$ satisfies
the condition (\ref{eqn:concondition}) for all $a\in A$. In particular,
the $\bbZ$-degree $1$ elements $\nabla = (L,1) \in \mathrm{con}(V)$ satisfy the 
Leibniz rule with respect to the differential $\dd(1)$. Hence, our notion of connections
contains the standard notion of connections as distinguished points.
It is important to notice that our definition has the advantage
that $\mathrm{con}(V)$ is by construction an object in ${}^H\MMM$ while
the subset of all ordinary connections $\nabla = (L,1)\in\mathrm{con}(V)$ 
is just an affine space over the $k$-module of all $\bbZ$-degree $1$ elements $(L,0)\in\mathrm{con}(V) $,
hence it is not an object in ${}^H\MMM$. 
\end{rem}

Finally, we prove an important structural result for connections.
\begin{propo}
Let $(A, \dd)$ be any differential calculus in ${}^H\MMM$ and let $V$ be any object in
${}^H_{}{}^{}_{A}\MMM^{\mathrm{sym}}_{A}$.  Then $\mathrm{con}(V)$
 is an ${}^H\MMM$-subobject of $\mathrm{diff}^1(V) \times I[1]$.
\end{propo}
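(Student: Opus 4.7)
The plan is to show that for every point $(L,c)\in \mathrm{con}(V)$, the first component $L$ lies in $\mathrm{diff}^1(V)$; by Lemma \ref{lem:diff} this amounts to checking $[[L,a_1],a_2]=0$ for all $a_1,a_2\in A$. Since $\mathrm{con}(V)\hookrightarrow\mathrm{end}(V)\times I[1]$ is an ${}^H\MMM$-monomorphism, this automatically yields the desired factorization through the ${}^H\MMM$-subobject $\mathrm{diff}^1(V)\times I[1]\subseteq \mathrm{end}(V)\times I[1]$.

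The key auxiliary observation is that the ${}^H\AAA$-morphism $\widehat{l}:A\to\mathrm{end}(V)$ of \eqref{eqn:leftAactionvsrepresentation1} factors through $\mathrm{end}_A(V)=\mathrm{diff}^0(V)$ (cf.\ Remark \ref{rem:diff0}). By Lemma \ref{lem:homA} this reduces to proving $[\widehat{l}(a),a']=0$ for all homogeneous $a,a'\in A$. Unfolding the internal commutator using that $\widehat{l}$ preserves products gives
\begin{flalign*}
[\widehat{l}(a),a']=\widehat{l}(a\,a')-(-1)^{|a||a'|}\,\widehat{l}\bigl((R^{(2)}\ra a')\,(R^{(1)}\ra a)\bigr),
\end{flalign*}
which vanishes by braided commutativity of $A\in{}^H\AAA^{\mathrm{com}}$.

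With this in hand, suppose $(L,c)\in\mathrm{con}(V)$. By Lemma \ref{lem:con} we have $[L,a_1]=\widehat{l}\bigl(\ev(\dd(c)\otimes a_1)\bigr)$ for every $a_1\in A$, so $[L,a_1]$ lies in the image of $\widehat{l}$ and is therefore a zeroth-order differential operator. Applying Lemma \ref{lem:diff} (with $n=0$) to this element yields $\bigl[[L,a_1],a_2\bigr]=0$ for all $a_2\in A$, and a second application of Lemma \ref{lem:diff} (with $n=1$) gives $L\in\mathrm{diff}^1(V)$. This concludes the factorization.

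The main obstacle is the first step, namely checking that $\widehat{l}(A)$ consists of zeroth-order operators; this is where one must carefully combine the algebra-morphism property of $\widehat{l}$ with braided commutativity of $A$ and the symmetry hypothesis on $V$. The remainder is a direct, essentially formal, chain of applications of Lemmas \ref{lem:con}, \ref{lem:homA} and \ref{lem:diff} to the characterizing identity \eqref{eqn:concondition}.
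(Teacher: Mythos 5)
Your proof is correct and follows essentially the same route as the paper: use Lemma \ref{lem:con} to rewrite $[L,a]$ as $\widehat{l}\bigl(\ev(\dd(c)\otimes a)\bigr)$ and then apply Lemma \ref{lem:diff} after observing that $[[L,a],a']=0$. The only difference is that you explicitly justify the vanishing $[\,\widehat{l}(b),a'\,]=0$ (via the algebra-morphism property of $\widehat{l}$ and braided commutativity of $A$), a step the paper's proof asserts without comment.
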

\begin{proof}
The object $\mathrm{con}(V)$ is by construction an ${}^H\MMM$-subobject of $\mathrm{end}(V)\times I[1]$
and hence the image of $\mathrm{pr}_1 : \mathrm{con}(V) \to \mathrm{end}(V)$ is an ${}^H\MMM$-subobject 
of $\mathrm{end}(V)$. We have
\begin{flalign}
[[L,a],a^\prime\, ] = \big[\, \widehat{l} \big(\ev(\dd(c) \otimes a) \big), a^\prime\, \big] = 0~,
\end{flalign}
for all $(L,c)\in \mathrm{con}(V)$, which by using Lemma~\ref{lem:diff} shows that the image of
$\mathrm{pr}_1 : \mathrm{con}(V) \to \mathrm{end}(V)$ is an ${}^H\MMM$-subobject 
of $\mathrm{diff}^{1}(V)$ and hence that $\mathrm{con}(V)$ is an ${}^H\MMM$-subobject 
of $\mathrm{diff}^{1}(V)\times I[1]$.
\end{proof}


\subsection{Connections on tensor products}
We shall now develop a lifting prescription for connections to tensor products of objects
in ${}^H_{}{}^{}_{A}\MMM^{\mathrm{sym}}_{A}$.
Let us first notice that for any two objects $V,W$ in ${}^H_{}{}^{}_{A}\MMM^{\mathrm{sym}}_{A}$ 
there are two ${}^H\MMM$-morphisms given by the compositions
\begin{subequations}
\begin{flalign}
\xymatrix{
\mathrm{end}(V)\ar[r]^-{\rho^{-1}} & \mathrm{end}(V)\otimes I\ar[r]^-{\id\otimes\eta} & 
\mathrm{end}(V)\otimes \mathrm{end}(W)\ar[r]^-{\obultimes} & \mathrm{end}(V\otimes W)
}
\end{flalign}
and
\begin{flalign}
\xymatrix{
\mathrm{end}(W)\ar[r]^-{\lambda^{-1}} & I \otimes \mathrm{end}(W)
\ar[r]^-{\eta\otimes \id} & \mathrm{end}(V)\otimes \mathrm{end}(W)\ar[r]^-{\obultimes} & \mathrm{end}(V\otimes W) \ .
}
\end{flalign}
\end{subequations}
These ${}^H\MMM$-morphisms are given explicitly by the mappings
\begin{flalign}
L\longmapsto L\obultimes 1 \qquad\text{and}\qquad L^\prime \longmapsto 1\obultimes L^\prime~,
\end{flalign}
respectively.
\begin{defi}\label{defi:obulplus}
For any two objects $V,W$ in ${}^H_{}{}^{}_{A}\MMM^{\mathrm{sym}}_{A}$
we define the $^H\MMM$-morphism
\begin{flalign}
\nn \obulplus: \big(\mathrm{end}(V) \times I[1]\big) \times \big(\mathrm{end}(W) \times I[1]\big) &\longrightarrow \mathrm{end}(V \otimes W) \times I[1]~,\\[4pt]
\big((L, c),(L^\prime, c^\prime\, )\big) &\longmapsto \big(L\obultimes 1 +1\obultimes L^\prime, c\big) ~.
\end{flalign}
\end{defi}

In order to prove that $\obulplus$ restricts
to connections, i.e.\ to an ${}^H\MMM$-morphism $\obulplus : \mathrm{con}(V)\times \mathrm{con}(W) \to \mathrm{con}(V\otimes W)$,
we require the following technical lemma.
\begin{lem}\label{lem:fortensorcon}
Let $A$ be an object in $^H\AAA^{\mathrm{com}}$ 
and let $ V,W$ be any two objects in ${}^H_{}{}^{}_{A}\MMM^{\mathrm{sym}}_{A}$.
\begin{itemize}
\item[(i)] 
Recalling the ${}^H\MMM$-morphisms $\widehat{l}_V : A \to \mathrm{end}(V)$
and $\widehat{l}_{V\otimes W} : A\to\mathrm{end}(V\otimes W)$
given in (\ref{eqn:leftAactionvsrepresentation1}), one has
\begin{flalign}
\widehat{l}_{V \otimes W} (a) = \widehat{l}_{V} (a) \obultimes 1~,
\end{flalign}
for all $a \in A$. 
\item[(ii)] For any $L,K \in \mathrm{end}(V)$ and $L^\prime \in \mathrm{end}(W)$,
one has
\begin{flalign}
[K \obultimes 1, L \obultimes 1] = [K ,L ] \obultimes 1 \qquad \mbox{and} \qquad 
[1\obultimes L^\prime, L \obultimes 1] = 0~.
\end{flalign}
\item[(iii)] For any $L \in \mathrm{end}(V)$,  $L^\prime \in \mathrm{end}(W) $ and
$a \in A$, one has
\begin{flalign}
\big[L\obultimes 1 +  1\obultimes L^\prime , a\big] =  [L, a ] \obultimes 1~.
\end{flalign}
\end{itemize}
\end{lem}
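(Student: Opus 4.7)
The strategy is to prove the three items in order, with (iii) reducing formally to a combination of (i) and (ii). The key algebraic observation that underlies the whole argument is that the unit internal homomorphism $1 = (\beta \ra \,\cdot\,)$ is $H$-invariant in the sense that $h \ra 1 = \epsilon(h)\,1$ for all $h \in H$; this follows directly from the quasi-Hopf identity $h_{(1)}\,\beta\,S(h_{(2)}) = \epsilon(h)\,\beta$. This simplification lets the $R$-matrix factors that appear in the braiding pass through the ``$\obultimes 1$'' slot harmlessly.

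For item (i), I would exploit the fact that, by definition, $\widehat{l}_V = \zeta(l_V)$ and $\widehat{l}_{V\otimes W} = \zeta(l_{V\otimes W})$. Using naturality and injectivity of currying, it suffices to check that the two ${}^H\MMM$-morphisms $A \otimes (V\otimes W)\to V\otimes W$ obtained by applying $\zeta^{-1}$ to both sides coincide. By Lemma~\ref{lem:evcurry}(i), $\zeta^{-1}(\widehat{l}_V(\,\cdot\,) \obultimes 1) = \ev \circ ((\widehat{l}_V \obultimes 1)\otimes \id)$, and one then evaluates $\ev$ on an element of the form $(\widehat{l}_V(a) \obultimes 1) \otimes (v\otimes w)$ by inserting the explicit formula for $\obultimes$ from~\cite[Proposition~5.5]{Barnes:2014}. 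The result matches $a\,(v\otimes w)$ computed from the left $A$-action on the tensor product bimodule $V\otimes W$, which is exactly $\zeta^{-1}(\widehat{l}_{V\otimes W})$ evaluated on $a\otimes(v\otimes w)$.

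For item (ii), the first identity follows from (\ref{eqn:compobultimesprop1}), which gives $(K \obultimes 1)\bullet(L\obultimes 1) = (K\bullet L)\obultimes 1$, together with the analogous computation for the braided term $(\bullet\circ\tau)((K\obultimes 1)\otimes(L\obultimes 1))$: using $H$-equivariance of $\obultimes$ and the observation $h\ra 1 = \epsilon(h)\,1$, the coproducts of the $R$-matrix factors collapse via the counit, yielding $(-1)^{|K||L|}\big((R^{(2)}\ra L)\bullet(R^{(1)}\ra K)\big)\obultimes 1$. Subtracting produces $[K,L]\obultimes 1$. For the second identity, apply (\ref{eqn:compobultimesprop3}) to $(1\obultimes L')\bullet(L\obultimes 1)$; then compute the braided term by the same collapsing trick together with (\ref{eqn:obultimesintermsofcirc}), which writes $(R^{(2)}\ra L)\obultimes(R^{(1)}\ra L') = ((R^{(2)}\ra L)\obultimes 1)\bullet(1\obultimes(R^{(1)}\ra L'))$. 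Both sides of the commutator then match and cancel.

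For item (iii), use bilinearity of $[\,\cdot\,,\,\cdot\,]$ to split $[L\obultimes 1 + 1\obultimes L', a]$ into two terms, rewrite $a$ on the right as $\widehat{l}_{V\otimes W}(a) = \widehat{l}_V(a)\obultimes 1$ via (i), and apply the two identities of (ii) to get $[L,\widehat{l}_V(a)]\obultimes 1 + 0 = [L,a]\obultimes 1$. The main obstacle throughout is bookkeeping of the $R$-matrix and associator factors introduced by the braiding and by $\obultimes$; the proof is made tractable by the $H$-invariance of $1$, which ensures that these factors always collapse via the counit whenever they meet a ``$\obultimes 1$'' slot.
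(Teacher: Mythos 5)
Your proposal is correct and follows essentially the same route as the paper: item (i) by comparing evaluations via Lemma~\ref{lem:evcurry}~(i) and invertibility of the currying bijection, item (ii) from the identities of Lemma~\ref{lem:tensorproductproperties} combined with the $H$-invariance $h\ra 1=\epsilon(h)\,1$ (which the paper leaves implicit in its one-line "follows immediately"), and item (iii) by reducing to (i) and (ii) exactly as in the paper. The only quibble is notational: $\zeta^{-1}(\widehat{l}_V(\,\cdot\,)\obultimes 1)=\ev\circ((\widehat{l}_V(\,\cdot\,)\obultimes 1)\otimes\id)$ should be read as currying the map $a\mapsto \widehat{l}_V(a)\obultimes 1$, but the intent is clear and the argument goes through.
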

\begin{proof}
Let us first prove item (i). By definition of $\widehat{l}_{V \otimes W}$ we have
\begin{flalign}
\ev\big(\, \widehat{l}_{V \otimes W} (a)\otimes (v \otimes w)\big) =
a\,(v\otimes w)~,
\end{flalign}
for all $v \in V, w \in W$ and $ a \in A$. On the other hand, using~\cite[Equation (5.7a)]{Barnes:2014}
we have
\begin{flalign}
\ev\big((\, \widehat{l}_V(a)\obultimes 1) \otimes (v\otimes w)\big)=
\big((\phi^{(-1)}\ra a)\,(\phi^{(-2)}\ra v)\big)\otimes (\phi^{(-3)}\ra w) = a\,(v\otimes w)~,
\end{flalign}
for all $v \in V, w \in W$ and $ a \in A$. The assertion then follows by using Lemma~\ref{lem:evcurry} 
(i) and invertibility of $\zeta$.
Item (ii) follows immediately from Lemma~\ref{lem:tensorproductproperties}.
Item (iii) is a consequence of item (i) and (ii), as
\begin{flalign}
\nn \big[L\obultimes 1 +  1\obultimes L^\prime , a\big] &= 
\big[L\obultimes 1 +  1\obultimes L^\prime , \widehat{l}_{V\otimes W}(a)\big]\\[4pt]
&=\big[L\obultimes 1 +  1\obultimes L^\prime , \widehat{l}_V(a)\obultimes 1\big]= [L, a ] \obultimes 1~,
\end{flalign}
and the assertion follows.
\end{proof}

\begin{propo}
Let $(A, \dd)$ be a differential calculus in $^H\MMM$ and let $ V,W$ be two 
objects in ${}^H_{}{}^{}_{A}\MMM^{\mathrm{sym}}_{A}$. Then $\obulplus$ restricts to an $^H\MMM$-morphism
\begin{flalign}\label{eqn:obulpluscon}
\obulplus: \mathrm{con}(V) \times  \mathrm{con}(W) &\longrightarrow \mathrm{con}(V \otimes W) ~.
\end{flalign}
\end{propo}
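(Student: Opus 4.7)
The plan is to invoke Lemma~\ref{lem:con} to characterize membership in $\mathrm{con}(V\otimes W)$ pointwise: the restriction in question exists if and only if, for every pair $\big((L,c),(L^\prime,c^\prime\, )\big)\in \mathrm{con}(V)\times\mathrm{con}(W)$, the image element $\big(L\obultimes 1 + 1\obultimes L^\prime, c\big)$ of $\mathrm{end}(V\otimes W)\times I[1]$ satisfies the Leibniz-type identity
\begin{flalign}
\big[L\obultimes 1 + 1\obultimes L^\prime, a\big] \;=\; \widehat{l}_{V\otimes W}\big(\ev(\dd(c)\otimes a)\big)~,
\end{flalign}
for all $a\in A$. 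Thus the task reduces to verifying this equality.

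The verification is a three-step chase through Lemma~\ref{lem:fortensorcon}. First, I would rewrite the left-hand side using item (iii) of Lemma~\ref{lem:fortensorcon} to obtain $[L,a]\obultimes 1$; note that here the $c^\prime$-component is irrelevant because the $1\obultimes L^\prime$ summand is annihilated by the internal commutator with $\widehat{l}_{V\otimes W}(a)$, which is precisely what makes the asymmetric definition of $\obulplus$ (in which only $c$ appears in the $I[1]$-slot) the correct one. Second, since $(L,c)\in \mathrm{con}(V)$, Lemma~\ref{lem:con} applied on $V$ gives $[L,a] = \widehat{l}_V\big(\ev(\dd(c)\otimes a)\big)$, turning the expression into $\widehat{l}_V\big(\ev(\dd(c)\otimes a)\big)\obultimes 1$. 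Third, item (i) of Lemma~\ref{lem:fortensorcon} identifies $\widehat{l}_V(\,\cdot\,)\obultimes 1$ with $\widehat{l}_{V\otimes W}(\,\cdot\,)$, producing exactly the required right-hand side.

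Once the image of $\obulplus\big\vert_{\mathrm{con}(V)\times\mathrm{con}(W)}$ is shown to lie in the ${}^H\MMM$-subobject $\mathrm{con}(V\otimes W)\subseteq \mathrm{end}(V\otimes W)\times I[1]$, the corestriction inherits ${}^H\MMM$-equivariance from $\obulplus$ itself, which was already established in Definition~\ref{defi:obulplus} by its construction from $\obultimes$, $\eta$ and the unitors of ${}^H\MMM$. The main conceptual point in the argument is thus not the equality itself, which is routine, but recognizing that the asymmetry built into Definition~\ref{defi:obulplus} is dictated precisely by Lemma~\ref{lem:fortensorcon} (iii); all remaining work has already been absorbed into the preparatory lemmas.
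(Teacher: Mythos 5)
Your proof is correct and follows exactly the paper's argument: reduce to the pointwise condition of Lemma~\ref{lem:con}, then chain item~(iii) of Lemma~\ref{lem:fortensorcon}, the connection property of $(L,c)$ on $V$, and item~(i) of Lemma~\ref{lem:fortensorcon}. Your added remark explaining why the asymmetry in Definition~\ref{defi:obulplus} is forced is a nice observation but does not change the substance.
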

\begin{proof}
We have to show that the image of $\obulplus: \mathrm{con}(V) \times  \mathrm{con}(W) 
\rightarrow \mathrm{end}(V \otimes W) \times I[1] $ is an ${}^H\MMM$-subobject of
$\mathrm{con}(V\otimes W)$.  Using Lemma~\ref{lem:con} this can be shown by the computation
\begin{flalign}
\big[L\obultimes 1 + 1\obultimes L^\prime, a\big]=[L,a]\obultimes 1 = \widehat{l}_V\big(\ev(\dd(c)\otimes a)\big)\obultimes 1
= \widehat{l}_{V\otimes W}\big(\ev(\dd(c)\otimes a)\big)~,
\end{flalign}
for all $(L,c)\in \mathrm{con}(V)$, $(L^\prime,c^\prime\, )\in \mathrm{con}(W)$ and $a\in A$.
In the first equality we used item~(iii) and in the last equality item~(i) of Lemma~\ref{lem:fortensorcon}.
\end{proof}

The ${}^H\MMM$-morphism (\ref{eqn:obulpluscon}) describes the construction of
connections on the object $V\otimes W$ but not on the object $V\otimes_A W$, 
which is obtained by using the correct monoidal functor 
$\otimes_A$ in ${}^H_{}{}^{}_{A}\MMM^{\mathrm{sym}}_{A}$. As $V\otimes_A W$ can be
obtained by taking a quotient of $V\otimes W$ (cf.~(\ref{eqn:otimesA})), we may ask 
if (\ref{eqn:obulpluscon}) induces an ${}^H\MMM$-morphism with target given by
$\mathrm{con}(V\otimes_A W)$. For this to hold true, we have to restrict
the source of (\ref{eqn:obulpluscon}) to the fibred product $\mathrm{con}(V) \times_{I[1]} \mathrm{con}(W) $
 given by the pullback
\begin{flalign}\label{eqn:fibered}
\xymatrix{
\ar[d] \mathrm{con}(V) \times_{I[1]} \mathrm{con}(W) \ar[r] & \mathrm{con}(W) \ar[d]^-{\mathrm{pr}_2}\\
\mathrm{con}(V) \ar[r]_-{\mathrm{pr}_2} & I[1]
}~
\end{flalign}
 in the category $^H\MMM$. Then $\mathrm{con}(V) \times_{I[1]} \mathrm{con}(W)$ is the ${}^H\MMM$-subobject
of $\mathrm{con}(V) \times \mathrm{con}(W)$ with elements given by pairs
$((L,c),(L^\prime ,c^\prime\, ))$ such that $c=c^\prime$.
We can now state one of the main results of this section.
\begin{theo}\label{theo:sumcon}
Let $(A, \dd)$ be a differential calculus in $^H\MMM$ and let $ V,W $ be two
objects in ${}^H_{}{}^{}_{A}\MMM^{\mathrm{sym}}_{A}$. Then
$\obulplus$ induces an ${}^H\MMM$-morphism
\begin{flalign}
\obulplus: \mathrm{con}(V) \times_{I[1]} \mathrm{con}(W) &\longrightarrow \mathrm{con}(V \otimes_A W) ~.
\end{flalign}
\end{theo}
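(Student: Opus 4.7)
My strategy is to combine the universal property of the quotient $\pi:V\otimes W\to V\otimes_A W$ from the construction (\ref{eqn:otimesA}) with the ${}^H\MMM$-morphism $\obulplus$ built in the preceding proposition. Given $(L,c)\in\mathrm{con}(V)$ and $(L^\prime,c^\prime)\in\mathrm{con}(W)$, write $D:=L\obultimes 1+1\obultimes L^\prime\in \mathrm{end}(V\otimes W)$. The plan is to show that, whenever $c=c^\prime$, this $D$ descends to a unique $\tilde D\in \mathrm{end}(V\otimes_A W)$ with $\tilde D\circ \pi=\pi\circ D$, and then verify that $(\tilde D,c)$ satisfies the connection condition of Lemma~\ref{lem:con} on $V\otimes_A W$.

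The principal step is the descent through $\pi$. Since $\ker\pi=\mathrm{Im}\bigl(r\otimes\id-(\id\otimes l)\circ \Phi\bigr)$, it suffices to show that $\pi\circ D$ annihilates the generating elements $r(v\otimes a)\otimes w-(\id\otimes l)\circ \Phi(v\otimes a\otimes w)$. Applying $D$ and distributing, the summand $L\obultimes 1$ produces, via Lemma~\ref{lem:con} for $L$, an $A$-linear piece together with an anomalous piece proportional to $\widehat{l}_V(\ev(\dd(c)\otimes a))$ acting on the first tensor factor; the summand $1\obultimes L^\prime$ produces analogous contributions on the second tensor factor involving $\dd(c^\prime)$. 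The $A$-linear pieces cancel modulo $\ker\pi$ directly from the bimodule relations (\ref{eqn:wcbimod}) together with Lemmas~\ref{lem:tensorproductproperties} and~\ref{lem:obultimesphi}, while the two $\dd$-anomalies are exchanged between the factors by the symmetry of the bimodules and the defining relation of $\otimes_A$, and they match precisely when $\dd(c)=\dd(c^\prime)$. Since $\dd$ is an ${}^H\MMM$-morphism, this equality holds on the fibred product $\mathrm{con}(V)\times_{I[1]}\mathrm{con}(W)$ (\ref{eqn:fibered}), where $c=c^\prime$. Functoriality of the construction in the pair then promotes $D\mapsto \tilde D$ to an ${}^H\MMM$-morphism on the relevant subobject, so that $\obulplus$ restricts as claimed.

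The second step is to verify $[\tilde D,a]=\widehat{l}_{V\otimes_A W}\bigl(\ev(\dd(c)\otimes a)\bigr)$ for all $a\in A$, which by Lemma~\ref{lem:con} suffices to conclude that $(\tilde D,c)\in \mathrm{con}(V\otimes_A W)$. This follows from the corresponding identity $[D,a]=\widehat{l}_{V\otimes W}\bigl(\ev(\dd(c)\otimes a)\bigr)$ already established in the preceding proposition by applying $\pi$ to both sides and invoking the evident analogue of Lemma~\ref{lem:fortensorcon}(i) identifying $\widehat{l}_{V\otimes_A W}(a)\circ \pi=\pi\circ \bigl(\widehat{l}_V(a)\obultimes 1\bigr)$, together with the fact that $\pi$ is an ${}^H\MMM$-morphism compatible with the internal composition $\bullet$. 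The main obstacle is the descent in the first step: the cancellation of the two $\dd$-anomalies relies in a nontrivial way on the braided symmetry of $V$ and $W$, the coherence identities for $\phi$ and the $R$-matrix from~\cite[Section~5.1]{Barnes:2014}, and the structural identities of Lemma~\ref{lem:tensorproductproperties}, and it is precisely this cancellation that forces the restriction to the fibred product rather than the full product $\mathrm{con}(V)\times \mathrm{con}(W)$.
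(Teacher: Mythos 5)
Your proposal is correct and follows essentially the same route as the paper: both reduce the theorem to showing that $L\obultimes 1+1\obultimes L^\prime$ descends through the quotient $\pi:V\otimes W\to V\otimes_A W$ by checking that it annihilates the generators of $\ker\pi$, with the two Leibniz anomalies (one from each summand, proportional to $\dd(c)$ and $\dd(c^\prime)$ respectively) cancelling precisely because $c=c^\prime$ on the fibred product. Your added second step, verifying the connection condition on the quotient by pushing the identity $[D,a]=\widehat{l}_{V\otimes W}(\ev(\dd(c)\otimes a))$ through $\pi$, makes explicit a point the paper leaves implicit, but does not change the substance of the argument.
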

\begin{proof}
Let $((L,c),(L^\prime,c))\in \mathrm{con}(V) \times_{I[1]} \mathrm{con}(W)$ be an arbitrary element.
Applying $\obulplus$ gives the element 
\begin{flalign}
\big(L\obultimes 1 + 1\obultimes L^\prime , c \big) \in 
\mathrm{con}(V\otimes W)\subseteq \mathrm{end}(V\otimes W)\times I[1]~,
\end{flalign}
where we regard $K := L\obultimes 1 + 1\obultimes L^\prime$ simply as a $k$-linear map
$K : V\otimes W \to V\otimes W$. We have to prove that
$K$ descends to a well-defined $k$-linear map $K : V\otimes_A W \to V\otimes_A W$ on the quotient (\ref{eqn:otimesA}). Denoting by $\pi : V\otimes W\to V\otimes_A W$ the quotient map,
this amounts to showing that
\begin{flalign}\label{eqn:tmpcheckprodcon}
\pi\circ K\Big(\big(v\,a\big)\otimes w - (\phi^{(1)}\ra v)\otimes\big((\phi^{(2)}\ra a)\,(\phi^{(3)}\ra w)\big)\Big) =0~,
\end{flalign}
for all $v\in V$, $w\in W$ and $a\in A$. Let us for the moment consider the case of trivial associator
$\phi = 1\otimes 1\otimes 1$. Then the equality (\ref{eqn:tmpcheckprodcon}) can be easily verified on homogeneous elements
by using
\begin{subequations}
\begin{flalign}
\nn \pi\circ (L\obultimes 1)\big((v\,a) \otimes w\big) &= L(v\,a) \otimes_A w \\[4pt]
\nn &= (-1)^{\vert v\vert\,\vert a\vert}~L\big((R^{(2)}\ra a)\,(R^{(1)}\ra v)\big) \otimes_A w\\[4pt]
\nn &=(-1)^{(\vert v\vert+ \vert L\vert)\,\vert a\vert}~\big(\widetilde{R}{}^{(2)}\,R^{(2)}\ra a\big)\, \big(\widetilde{R}{}^{(1)}\ra L\big)\big(R^{(1)}\ra v\big) \otimes_A w\\
\nn &~~~~~+ (-1)^{\vert v\vert\,\vert a\vert}~\big(\dd(c)\big)\big(R^{(2)}\ra a\big)\, \big(R^{(1)}\ra v\big) \otimes_A w\\[4pt]
\nn &= L(v)\otimes_A \big(a\,w \big) + (-1)^{\vert v\vert\,\vert L\vert} ~v\otimes_A \big( \big(\dd(c)\big)(a)\,w\big)\\[4pt]
&= \pi\circ (L\obultimes 1)\big(v\otimes (a\,w) \big) + (-1)^{\vert v\vert\,\vert L\vert} ~v\otimes_A \big(\big(\dd(c)\big)(a)\,w \big)
\end{flalign}
and
\begin{flalign}
\nn \pi\circ (1\obultimes L^\prime\, )\big((v\,a) \otimes w\big)&= 
(-1)^{(\vert v\vert +\vert a\vert)\,\vert L^\prime\vert}~\big( (R^{(2)}\ra v)\,(\widetilde{R}{}^{(2)}\ra a) \big)\otimes_A
\big(\widetilde{R}{}^{(1)}\, R^{(1)} \ra L^\prime\, \big)\big(w\big)\\[4pt]
\nn &=(-1)^{\vert v\vert\,\vert L^\prime\vert}~\left(\big(R^{(2)}\ra v \big)\otimes_A \big(R^{(1)}\ra L^\prime\, \big)\big(a\,w\big) - v\otimes_A \big( \big(\dd(c)\big)(a)\,w \big) \right)\\[4pt]
&= \pi\circ (1\obultimes L^\prime\, )\big(v\otimes (a\,w) \big)  - (-1)^{\vert v\vert\,\vert L\vert} ~v\otimes_A \big(\big(\dd(c)\big)(a)\,w \big)~,
\end{flalign}
\end{subequations}
where in the last equality we used $\vert L\vert = \vert L^\prime\,\vert$.
The equality (\ref{eqn:tmpcheckprodcon}) also holds for the case of nontrivial associators, however
the corresponding calculation is much more lengthy and involved, and hence we will not write it out in detail.
\end{proof}

The following result allows us to consistently lift connections to tensor 
products of an arbitrary (finite) number of objects in ${}^H_{}{}^{}_{A}\MMM^{\mathrm{sym}}_{A}$.
\begin{theo}
Let $(A, \dd)$ be a differential calculus in $^H\MMM$ and let $ V,W,X $ be three
objects in ${}^H_{}{}^{}_{A}\MMM^{\mathrm{sym}}_{A}$.  Then the ${}^H\MMM$-diagram
\begin{flalign}
\xymatrix{
\ar[d]_-{\obulplus \circ \big(\id\times \obulplus\big)}\mathrm{con}(V)\times_{I[1]} \mathrm{con}(W)\times_{I[1]}\mathrm{con}(X)\ar[rr]^-{\obulplus \circ \big(\obulplus\times \id\big)} && 
\mathrm{con}\big((V\otimes_A W)\otimes_A X\big)\ar[dll]^-{~~~~\Phi \circ (\,\cdot\,)\circ \Phi^{-1}}\\
\mathrm{con}\big(V\otimes_A (W\otimes_A X)\big)&&
}
\end{flalign}
commutes.
\end{theo}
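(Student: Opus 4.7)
The plan is to verify commutativity pointwise on an arbitrary element $((L,c),(L',c),(L'',c))$ of the fibered product $\mathrm{con}(V)\times_{I[1]}\mathrm{con}(W)\times_{I[1]}\mathrm{con}(X)$, where the single shared $c \in I[1]$ is forced by the iterated pullback. The $I[1]$-components of both compositions are manifestly $c$, so only the $\mathrm{end}$-component needs to be analysed. Unpacking Definition~\ref{defi:obulplus}, the top-right composition yields the element
$\Phi\circ\big((L\obultimes 1 + 1\obultimes L')\obultimes 1 + 1\obultimes L''\big)\circ \Phi^{-1}$
(with the last $1$ being the unit internal endomorphism of $V\otimes_A W$), while the left-column composition yields
$L\obultimes 1 + 1\obultimes(L'\obultimes 1 + 1\obultimes L'')$.

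The first key step is to expand the upper expression using bilinearity of $\obultimes$ together with the identification $1_{V\otimes W} = 1_V\obultimes 1_W$ for the unit internal endomorphism of a tensor product, which is a short consequence of $1 = (\beta\ra\,\cdot\,)$ combined with the explicit formula for $\obultimes$ and the normalization of the quasi-antipode. This rewrites the top-right expression as the sum of three terms, each of precisely the shape handled by Lemma~\ref{lem:obultimesphi}. Applying that lemma term by term eliminates the conjugation by $\Phi$ and produces
$L\obultimes(1\obultimes 1) + 1\obultimes(L'\obultimes 1) + 1\obultimes(1\obultimes L'')$.
Using $1\obultimes 1 = 1$ once more and bilinearity of $\obultimes$ in the reverse direction, this rearranges to exactly the left-column expression.

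The passage from $\otimes$ to $\otimes_A$ is then essentially automatic: Theorem~\ref{theo:sumcon} already guarantees that each application of $\obulplus$ descends to connections on the appropriate $\otimes_A$-object, and the associator $\Phi^A$ on ${}^H_{A}\MMM^{\mathrm{sym}}_{A}$ is inherited from the associator $\Phi$ on ${}^H\MMM$ via the same formula, so the computation above descends verbatim to the quotient once one verifies it on representatives in the $\otimes$-tensor product. The main nuance I expect to require care is the precise identification $1_V\obultimes 1_W = 1_{V\otimes W}$ (and its iterate) together with bookkeeping of which copy of $1$ lives in which internal endomorphism algebra across the successive applications of Lemma~\ref{lem:obultimesphi}; once that alignment is made explicit, the remainder is a direct substitution. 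Given the length of the corresponding associator-free calculation already appearing in the proof of Theorem~\ref{theo:sumcon}, one may also expect that writing the details out for nontrivial $\phi$ is tedious, but no new conceptual ingredient beyond Lemma~\ref{lem:obultimesphi} and bilinearity of $\obultimes$ is needed.
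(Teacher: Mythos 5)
Your proposal is correct and follows essentially the same route as the paper: expand both composites of $\obulplus$ using bilinearity of $\obultimes$ and the identification $1_{V\otimes W}=1_V\obultimes 1_W$, and then match the resulting three terms pairwise via Lemma~\ref{lem:obultimesphi}. You are in fact slightly more explicit than the paper about the unit identification and about the descent from $\otimes$ to $\otimes_A$, both of which the paper leaves implicit.
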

\begin{proof}
Let $\big((L,c),(L^\prime,c),(L^{\prime\prime},c)\big)\in 
\mathrm{con}(V)\times_{I[1]} \mathrm{con}(W)\times_{I[1]}\mathrm{con}(X)$ be an arbitrary element.
Applying $\obulplus \circ (\obulplus\times \id)$ yields
\begin{subequations}
\begin{flalign}
\obulplus \circ (\obulplus\times \id) \left(\big((L,c),(L^\prime,c),(L^{\prime\prime},c)\big)\right)
=\big( (L\obultimes 1)\obultimes 1 + (1\obultimes L^\prime\, )\obultimes 1 + (1\obultimes 1)\obultimes L^{\prime\prime},c\big)
\end{flalign}
while applying $\obulplus \circ (\id \times \obulplus)$  yields
\begin{flalign}
\obulplus \circ (\id \times \obulplus) \left(\big((L,c),(L^\prime,c),(L^{\prime\prime},c)\big)\right)
=\big( L\obultimes (1 \obultimes 1) + 1\obultimes ( L^\prime\obultimes 1) + 1\obultimes (1 \obultimes L^{\prime\prime} \, ),c\big)~.
\end{flalign}
\end{subequations}
The assertion then follows by using Lemma~\ref{lem:obultimesphi}.
\end{proof}


\subsection{Connections on internal homomorphisms}
We shall now develop a lifting prescription for connections to the internal hom-objects
in ${}^H_{}{}^{}_{A}\MMM^{\mathrm{sym}}_{A}$. Let $(A,\dd)$ be a differential calculus
in ${}^H\MMM$ and $V,W$ two objects in ${}^H_{}{}^{}_{A}\MMM^{\mathrm{sym}}_{A}$.
Then there are two $^H\MMM$-morphisms
\begin{subequations}
\begin{flalign}
\LLL := \zeta(\bullet): \mathrm{end}(W) &\longrightarrow  \mathrm{end}(\hom(V,W))~, \\[4pt]
\RRR := \zeta(\bullet \circ \tau): \mathrm{end}(V)  &\longrightarrow \mathrm{end}(\hom(V,W))~.
\end{flalign}
\end{subequations}
\begin{defi}\label{defi:coninthom}
For any two objects $V,W$ in ${}^H_{}{}^{}_{A}\MMM^{\mathrm{sym}}_{A}$
we define the $^H\MMM$-morphism
\begin{flalign}
 \nn \DDD : \big( \mathrm{end}(W) \times I[1]\big) \times \big( \mathrm{end}(V) \times I[1]\big) &\longrightarrow \mathrm{end}(\hom(V,W)) \times I[1]~,\\[4pt]
\big((L^\prime,c^\prime\, ), (L, c)\big) &\longmapsto \big(\LLL(L^\prime\, ) - \RRR(L), c^\prime\, \big)~.
\end{flalign}
\end{defi}

We shall require the following two technical lemmas. 
\begin{lem}\label{lem:homa}
Let $A$ be an object in $^H\AAA^{\mathrm{com}}$ and let $V, W$ be any two objects in 
${}^H_{}{}^{}_{A}\MMM^{\mathrm{sym}}_{A}$. 
Recalling the ${}^H\MMM$-morphisms $\widehat{l}_W : A \to \mathrm{end}(W)$
and $\widehat{l}_{\hom(V,W)} : A\to\mathrm{end}(\hom(V,W))$
given in (\ref{eqn:leftAactionvsrepresentation1}), one has
\begin{flalign}
\widehat{l}_{\hom(V,W)}(a) = \LLL\big(\, \widehat{l}_W(a) \big)~,
\end{flalign}
for all $a \in A$.
\end{lem}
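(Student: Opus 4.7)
The plan is to unwind both sides via the currying bijection $\zeta$ and then invoke naturality. First I would recall that, just as in (\ref{eqn:lrhomA}) for the subobject $\hom_A(V,W)$, the left $A$-action on the full internal hom-object $\hom(V,W)$ in ${}^H\MMM$ is given by post-composition with left multiplication on $W$, i.e.\ the ${}^H\MMM$-morphism
\begin{flalign}
l_{\hom(V,W)} = \bullet \circ \bigl(\widehat{l}_W \otimes \id\bigr) : A \otimes \hom(V,W) \longrightarrow \hom(V,W)~.
\end{flalign}
This is the $A$-bimodule structure on $\hom(V,W)$ inherited from the one on $W$ (cf.\ Part~I, Section~4), and it restricts to the formula (\ref{eqn:lrhomA}) on the subobject $\hom_A(V,W)$.

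Next, by the definition of $\widehat{l}_{\hom(V,W)}$ in (\ref{eqn:leftAactionvsrepresentation1}) we have $\widehat{l}_{\hom(V,W)} = \zeta\bigl(l_{\hom(V,W)}\bigr)$, so
\begin{flalign}
\widehat{l}_{\hom(V,W)} = \zeta\Bigl(\bullet \circ \bigl(\widehat{l}_W \otimes \id\bigr)\Bigr)~.
\end{flalign}
Applying naturality of the currying bijection (exactly as used at the end of the proof of Lemma~\ref{lem:evcurry}) to the ${}^H\MMM$-morphism $\widehat{l}_W : A \to \mathrm{end}(W)$ yields
\begin{flalign}
\zeta\Bigl(\bullet \circ \bigl(\widehat{l}_W \otimes \id\bigr)\Bigr) = \zeta(\bullet) \circ \widehat{l}_W = \LLL \circ \widehat{l}_W~,
\end{flalign}
where the last equality is the definition $\LLL := \zeta(\bullet)$ from Definition~\ref{defi:coninthom}. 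Evaluating at $a \in A$ produces $\widehat{l}_{\hom(V,W)}(a) = \LLL\bigl(\widehat{l}_W(a)\bigr)$, as required.

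The only nontrivial point is the first step, namely identifying the canonical left $A$-action on $\hom(V,W)$ with $\bullet \circ (\widehat{l}_W \otimes \id)$; once this is in hand the argument is purely a naturality statement. An alternative verification, if one prefers an entirely intrinsic check, is to apply $\zeta^{-1}$ to both sides and show that the resulting ${}^H\MMM$-morphisms $A \otimes \hom(V,W) \to \hom(V,W)$ coincide — this amounts to the identity $\zeta^{-1}(\LLL \circ \widehat{l}_W) = \bullet \circ (\widehat{l}_W \otimes \id)$, which follows from item~(i) of Lemma~\ref{lem:evcurry} together with the definition of $\LLL$.
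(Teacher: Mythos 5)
Your proposal is correct and follows essentially the same route as the paper: both identify $\widehat{l}_{\hom(V,W)}$ as $\zeta\bigl(\bullet\circ(\widehat{l}_W\otimes\id)\bigr)$ via the left $A$-action from (\ref{eqn:lrhomA}) and then conclude by naturality of the currying bijection that this equals $\zeta(\bullet)\circ\widehat{l}_W = \LLL\circ\widehat{l}_W$. The extra remark making explicit that the left $A$-action on all of $\hom(V,W)$ is $\bullet\circ(\widehat{l}_W\otimes\id)$ is a reasonable clarification of a step the paper leaves implicit.
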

\begin{proof}
Recalling (\ref{eqn:lrhomA}) and using naturality of the currying bijection yields
\begin{flalign}
\nn \widehat{l}_{\hom(V,W)}(a) &= \zeta\big(\bullet \circ(\, \widehat{l}_W \otimes \id) \big)(a) \\[4pt] \nn &= \zeta^{}\big(\Hom^{}_{{}^H\MMM}(\, {\widehat{l}_W}^{ \ \mathrm{op}}\otimes \id^{\mathrm{op}}, \id)(\bullet)\big)(a)\\[4pt] \nn
&= \Hom^{}_{{}^H\MMM}\big(\, {\widehat{l}_W}^{ \ \mathrm{op}},\hom(\id^{\mathrm{op}},\id) \big)\big(\zeta^{}(\bullet)\big)(a) \\[4pt] &= \zeta(\bullet)\big( \, \widehat{l}_W(a) \big)= \LLL\big(\, \widehat{l}_W(a) \big)~,
\end{flalign}
for all $a\in A$.
\end{proof}
\begin{lem}\label{lem:LLLRRR}
Let $V, W$ be two objects in ${}^H_{}{}^{}_{A}\MMM^{\mathrm{sym}}_{A}$. Then
\begin{subequations}
\begin{flalign}
\label{eqn:1st} \LLL(L) \bullet \LLL(L^\prime\,) &= \LLL(L \bullet L^\prime\, )~,\\[4pt]
\label{eqn:3rd} \RRR(K) \bullet \LLL(L) &= (-1)^{\vert K \vert\,  \vert L \vert}\, \LLL\big(R^{(2)} \ra L\big) \bullet \RRR\big(R^{(1)} \ra K \big)~,
\end{flalign}
\end{subequations}
for all homogeneous $L,L^\prime\in \mathrm{end}(W)$ and $K\in\mathrm{end}(V)$.
\end{lem}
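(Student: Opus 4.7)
The strategy is to establish both identities by testing them against an arbitrary $M\in\hom(V,W)$ under the internal evaluation: since $\zeta$ is a bijection, Lemma~\ref{lem:evcurry}(i) guarantees that two internal endomorphisms of $\hom(V,W)$ coincide as soon as they give the same result after composing with $\ev(-\otimes M)$. The two computational primitives I would isolate at the outset are
\begin{flalign*}
\ev\big(\LLL(L^\prime\,)\otimes M\big) &= L^\prime\bullet M~,\\
\ev\big(\RRR(K)\otimes M\big) &= (-1)^{\vert K\vert\,\vert M\vert}~\big(R^{(2)}\ra M\big)\bullet \big(R^{(1)}\ra K\big)~,
\end{flalign*}
both immediate from Lemma~\ref{lem:evcurry}(i) and the definitions $\LLL=\zeta(\bullet)$ and $\RRR=\zeta(\bullet\circ\tau)$.

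For \eqref{eqn:1st}, I would expand $\ev((\LLL(L)\bullet\LLL(L^\prime\,))\otimes M)$ using the evaluation--composition compatibility \eqref{eqn:evcompcompatibility} together with the $H$-equivariance of $\LLL$; the resulting expression is $(\phi^{(1)}\ra L)\bullet\big((\phi^{(2)}\ra L^\prime\,)\bullet(\phi^{(3)}\ra M)\big)$, which by the weak associativity \eqref{eqn:compweakassociativity} of $\bullet$ equals $(L\bullet L^\prime\,)\bullet M$, i.e.\ precisely $\ev(\LLL(L\bullet L^\prime\,)\otimes M)$. The conclusion \eqref{eqn:1st} then follows from invertibility of $\zeta$ and Lemma~\ref{lem:evcurry}(i).

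For \eqref{eqn:3rd} I would run the analogous unfolding on both sides. The left-hand side $\ev((\RRR(K)\bullet\LLL(L))\otimes M)$ becomes, after applying \eqref{eqn:evcompcompatibility}, the equivariance of $\RRR$ and $\LLL$, the two evaluation primitives above, the equivariance of $\bullet$ (to distribute $\Delta(R^{(2)})$ across a composition), and finally \eqref{eqn:compweakassociativity} to rebracket into the form $A\bullet(B\bullet C)$, a specific product in which $L$, $M$, $K$ are dressed by factors built from one copy of $R$, the coproduct $\Delta(R^{(2)})$ and three associator entries. Processing the right-hand side symmetrically, and pulling the prefactor $(-1)^{\vert K\vert\,\vert L\vert}$ through together with the sign from the $\RRR$-primitive, yields another expression of the shape $A\bullet(B\bullet C)$ with the same slots for $L,M,K$ but a different combination of $\phi$- and $R$-entries. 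The desired equality thus reduces to a single element-level identity in $H\otimes H\otimes H$, which is one of the hexagon relations linking $\phi$ and $R$ in a quasitriangular quasi-Hopf algebra (cf.\ the $R$-matrix identities recalled in \cite[Section~5.1]{Barnes:2014}); invoking Lemma~\ref{lem:evcurry}(i) once more completes the argument.

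The main obstacle is the bookkeeping of $\phi$- and $R$-factors in \eqref{eqn:3rd}: every non-trivial step comes from the two weak-associativity rebracketings, and verifying that the two normal forms actually match requires careful use of the hexagon axioms. By contrast \eqref{eqn:1st} is morally just the statement that $\LLL$ is an algebra morphism with respect to the internally associative composition on $\mathrm{end}(\hom(V,W))$, and needs only \eqref{eqn:compweakassociativity}.
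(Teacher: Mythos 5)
Your proposal follows essentially the same route as the paper's proof: both sides of \eqref{eqn:1st} are viewed as morphisms into $\mathrm{end}(\hom(V,W))$, reduced via the invertibility of $\zeta$ and Lemma~\ref{lem:evcurry}~(i) to a comparison of their uncurried forms, and then matched using the evaluation--composition compatibility \eqref{eqn:evcompcompatibility} and weak associativity \eqref{eqn:compweakassociativity}. For \eqref{eqn:3rd} the paper likewise only states that the identity ``can be shown similarly,'' which is consistent with your sketch of reducing it to the quasitriangularity (hexagon) axioms.
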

\begin{proof}
First, let us notice that both sides of (\ref{eqn:1st}) can be regarded as 
${}^H\MMM$-morphisms $\mathrm{end}(W)\otimes \mathrm{end}(W)\to \mathrm{end}(\hom(V,W))$: The morphism on the left-hand side is given by $ \bullet\circ (\LLL\otimes \LLL)$
and on the right-hand side by $\LLL\circ \bullet$. By invertibility of
the natural currying bijections, these two morphisms agree if and only $\zeta^{-1}(\bullet\circ (\LLL\otimes \LLL))
=\zeta^{-1}(\LLL\circ \bullet)$ as morphisms $(\mathrm{end}(W)\otimes \mathrm{end}(W))\otimes \hom(V,W)\to\hom(V,W)$.
This can be shown by using Lemma~\ref{lem:evcurry}~(i) and the calculation
\begin{flalign}
\nn \zeta^{-1}(\bullet\circ (\LLL\otimes \LLL))\big((L\otimes L^\prime\, )\otimes M\big)
&=\ev\big((\LLL(L)\bullet \LLL(L^\prime\, )) \otimes M\big)\\[4pt]
\nn &= \ev\left(\LLL(\phi^{(1)}\ra L)\otimes \ev\big(\LLL(\phi^{(2)}\ra L^\prime\,) \otimes (\phi^{(3)}\ra M)\big)\right)\\[4pt]
\nn &= (\phi^{(1)}\ra L)\bullet\big((\phi^{(2)}\ra L^\prime\, ) \bullet (\phi^{(3)}\ra M)\big) \\[4pt]
\nn &=(L\bullet L^\prime\, )\bullet M \\[4pt] 
\nn &= \ev\big(\LLL(L\bullet L^\prime\, )\otimes M\big)\\[4pt]
& = \zeta^{-1}(\LLL\circ \bullet)\big((L\otimes L^\prime\, )\otimes M\big)~,
\end{flalign}
for all $L,L^\prime\in\mathrm{end}(W)$ and $M\in\hom(V,W)$, where we have also used Lemma~\ref{lem:compproperties}.
The equality (\ref{eqn:3rd}) can be shown similarly.
\end{proof}

\begin{propo}\label{propo:conhom}
Let $(A, \dd)$ be a differential calculus in $^H\MMM$ and let $ V,W$ be two 
objects in ${}^H_{}{}^{}_{A}\MMM^{\mathrm{sym}}_{A}$. Then $\DDD$ restricts to an $^H\MMM$-morphism
\begin{flalign}
\DDD: \mathrm{con}(W) \times \mathrm{con}(V) \longrightarrow \mathrm{con}(\hom(V,W))~.
\end{flalign}
\end{propo}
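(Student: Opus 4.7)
By Lemma~\ref{lem:con}, it suffices to show that for all $(L',c')\in\mathrm{con}(W)$, $(L,c)\in\mathrm{con}(V)$ and $a\in A$ the identity
\begin{flalign*}
\big[\LLL(L')-\RRR(L),\,a\big] \;=\; \widehat{l}_{\hom(V,W)}\big(\ev(\dd(c')\otimes a)\big)
\end{flalign*}
holds in $\mathrm{end}(\hom(V,W))$. The plan is to compute the two pieces $[\LLL(L'),a]$ and $[\RRR(L),a]$ separately, show that the first reproduces $\LLL([L',a])$ while the second vanishes, and then invoke the connection condition for $(L',c')\in\mathrm{con}(W)$ together with Lemma~\ref{lem:homa}.

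First I would expand $[\LLL(L'),a]$ using the definition \eqref{eqn:brackethomVW} of the internal commutator on $\mathrm{end}(\hom(V,W))$ and replace $\widehat{l}_{\hom(V,W)}(a)$ by $\LLL(\widehat{l}_W(a))$ via Lemma~\ref{lem:homa}. Applying \eqref{eqn:1st} of Lemma~\ref{lem:LLLRRR} to each of the two resulting $\bullet$-products, one can pull $\LLL$ outside and obtain
\begin{flalign*}
[\LLL(L'),a] \;=\; \LLL\big(L'\bullet\widehat{l}_W(a)\big) - (-1)^{\vert L'\vert\,\vert a\vert}\,\LLL\big(\widehat{l}_W(R^{(2)}\ra a)\bullet (R^{(1)}\ra L')\big) \;=\; \LLL\big([L',a]\big),
\end{flalign*}
where in the last step I use the $H$-equivariance of $\LLL$ and the fact that $\LLL$ is an ${}^H\MMM$-morphism, so the $R$-matrix can be pushed through.

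Next I would show $[\RRR(L),a]=0$. Again expanding with \eqref{eqn:brackethomVW} and using Lemma~\ref{lem:homa}, the two summands take the form $\RRR(L)\bullet\LLL(\widehat{l}_W(a))$ and $\LLL(\widehat{l}_W(R^{(2)}\ra a))\bullet\RRR(R^{(1)}\ra L)$ up to sign. Applying the braided exchange relation \eqref{eqn:3rd} of Lemma~\ref{lem:LLLRRR} to the first summand and using the $H$-equivariance of $\widehat{l}_W$ (so that $R^{(2)}\ra\widehat{l}_W(a)=\widehat{l}_W(R^{(2)}\ra a)$) produces exactly the second summand with the opposite sign, yielding $[\RRR(L),a]=0$.

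Combining the two computations gives $[\LLL(L')-\RRR(L),a]=\LLL([L',a])$. Since $(L',c')\in\mathrm{con}(W)$, Lemma~\ref{lem:con} yields $[L',a]=\widehat{l}_W(\ev(\dd(c')\otimes a))$, and applying $\LLL$ and using Lemma~\ref{lem:homa} once more returns $\widehat{l}_{\hom(V,W)}(\ev(\dd(c')\otimes a))$, as required. The main technical obstacle is the braided-sign bookkeeping in the vanishing of $[\RRR(L),a]$: without the compatibility in \eqref{eqn:3rd} between $\LLL$ and $\RRR$ and the $H$-equivariance of $\widehat{l}_W$ the cancellation would fail, and this is precisely why the second component $(L,c)$ of a pair in $\mathrm{con}(W)\times\mathrm{con}(V)$ needs no further constraint for the statement to hold.
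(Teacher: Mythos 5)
Your proposal is correct and follows essentially the same route as the paper's proof: reduce via Lemma~\ref{lem:con} to the commutator identity, convert $\widehat{l}_{\hom(V,W)}(a)$ to $\LLL(\widehat{l}_W(a))$ with Lemma~\ref{lem:homa}, and use Lemma~\ref{lem:LLLRRR} to get $[\LLL(L^\prime)-\RRR(L),a]=\LLL([L^\prime,a])$ before invoking the connection condition on $(L^\prime,c^\prime)$. The only difference is that you spell out the two sub-computations ($[\LLL(L^\prime),a]=\LLL([L^\prime,a])$ via \eqref{eqn:1st} and $[\RRR(L),a]=0$ via \eqref{eqn:3rd}) which the paper compresses into a single step citing Lemma~\ref{lem:LLLRRR}; your sign and $H$-equivariance bookkeeping there is accurate.
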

\begin{proof}
We have to show that the image of $\DDD: \mathrm{con}(W) \times  \mathrm{con}(V) 
\rightarrow \mathrm{end}(\hom(V,W)) \times I[1] $ is an ${}^H\MMM$-subobject of
$\mathrm{con}(\hom(V, W))$.  Using Lemma~\ref{lem:con} this can be shown by the computation
\begin{flalign}
\nn \big[\LLL(L^\prime\, ) - \RRR(L), a\big] &= \big[\LLL(L^\prime\, ) - \RRR(L), \widehat{l}_{\hom(V,W)}(a)\big]\\[4pt] 
\nn &= \big[\LLL(L^\prime\, ) - \RRR(L), \LLL(\, \widehat{l}_W(a))\big] \\[4pt]
\nn &= \LLL\big([L^\prime, a]\big)\\[4pt]
\nn &= \LLL\big(\, \widehat{l}_W(\ev (\dd(c^\prime \, ) \otimes a)) \big)\\[4pt]
&= \widehat{l}_{\hom(V,W)}\big(\ev (\dd(c^\prime \, ) \otimes a) \big)~,
\end{flalign}
for all $(L^\prime,c^\prime\, )\in\mathrm{con}(W)$, $(L,c)\in \mathrm{con}(V)$  and $ a \in A $. In the second and last
equality we have used Lemma~\ref{lem:homa} and in the third equality we have used Lemma~\ref{lem:LLLRRR}.
\end{proof}

Restricting the source of $\DDD$ to the fibred product $\mathrm{con}(W) \times_{I[1]} \mathrm{con}(V)$ we obtain
a lifting prescription of connections to the internal hom-objects $\hom_A(V,W)$
in the category ${}^H_{}{}^{}_{A}\MMM^{\mathrm{sym}}_{A}$.
\begin{theo}\label{theo;adjointcon}
Let $(A, \dd)$ be a differential calculus in $^H\MMM$ and let $ V,W $ be two
objects in ${}^H_{}{}^{}_{A}\MMM^{\mathrm{sym}}_{A}$. Then
$\DDD$ induces an ${}^H\MMM$-morphism
\begin{flalign}
\DDD: \mathrm{con}(W) \times_{I[1]} \mathrm{con}(V) &\longrightarrow \mathrm{con}(\hom_A(V,W))~.
\end{flalign}
\end{theo}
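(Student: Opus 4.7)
The plan is to leverage Proposition \ref{propo:conhom}, which already supplies an ${}^H\MMM$-morphism $\DDD : \mathrm{con}(W) \times \mathrm{con}(V) \to \mathrm{con}(\hom(V,W))$, and to show that upon precomposing with the canonical inclusion $\mathrm{con}(W) \times_{I[1]} \mathrm{con}(V) \hookrightarrow \mathrm{con}(W) \times \mathrm{con}(V)$, the image factors through the ${}^H\MMM$-subobject $\mathrm{con}(\hom_A(V,W)) \hookrightarrow \mathrm{con}(\hom(V,W))$. Since the scalar $I[1]$-component is unaffected by $\DDD$, this reduces to verifying that the endomorphism component $N := \LLL(L') - \RRR(L) \in \mathrm{end}(\hom(V,W))$ preserves the ${}^H\MMM$-subobject $\hom_A(V,W) \subseteq \hom(V,W)$. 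The resulting restriction will then automatically satisfy the Leibniz rule in $\mathrm{con}(\hom_A(V,W))$, because the $A$-bimodule structure on $\hom_A(V,W)$ specified by \eqref{eqn:lrhomA} is precisely the restriction of the $A$-bimodule structure on $\hom(V,W)$ used in Proposition \ref{propo:conhom} (cf.\ Lemma \ref{lem:homa}).

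To establish this closure property, I would invoke Lemma \ref{lem:homA}: it suffices to show that
\[
\bigl[N(M),\,a\bigr] = 0
\]
for all homogeneous $M \in \hom_A(V,W)$ and $a \in A$, where $[\,\cdot\,,\,\cdot\,]$ is the bracket \eqref{eqn:brackethomVW}. Expanding $N(M) = L' \bullet M - (-1)^{|L|\,|M|}\,(R^{(2)}\ra M)\bullet (R^{(1)}\ra L)$ and applying the braided derivation property of the internal commutator (Proposition \ref{prop:bracket}(iii)) to each summand, all terms containing the inner bracket $[M,a]$ drop out since $M \in \hom_A(V,W)$ (by Lemma \ref{lem:homA}). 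The surviving terms involve the brackets $[L',a]$ and $[L,a]$, which by Lemma \ref{lem:con} both equal $\widehat{l}(\ev(\dd(c)\otimes a))$ on the nose because $(L',c)$ and $(L,c)$ share the same $I[1]$-component~$c$; this is precisely where the fibred product hypothesis enters, as for distinct $c, c'$ the two $\dd$-contributions could not be identified. Rearranging using Lemma \ref{lem:LLLRRR} together with standard $R$-matrix identities then yields the desired cancellation between these two $\dd(c)$-terms.

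The principal obstacle is the bookkeeping in this cancellation. The derivation property in Proposition \ref{prop:bracket}(iii) introduces the associator $\phi$ along with its helper $\widetilde{\phi}$ and additional $R$-matrices, while the $\RRR$-contribution carries an extra braiding from the $\tau$ inside $\RRR = \zeta(\bullet \circ \tau)$. Pushing the resulting $\widehat{l}(\ev(\dd(c)\otimes a))$ factors past $L'$ or $(R^{(1)}\ra L)$ using \eqref{eqn:evcompcompatibility} and Lemma \ref{lem:homa}, and then applying the quasitriangular quasi-Hopf axioms summarized in \cite[Section~5.1]{Barnes:2014}, brings both terms into a common form. This computation is structurally analogous to those already performed in Propositions \ref{propo:der} and \ref{propo:conhom}, so no essentially new technique is needed, merely a careful tracking of associators and braidings.
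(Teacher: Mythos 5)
Your proposal is correct and follows essentially the same route as the paper's proof: reduce via Proposition~\ref{propo:conhom} and Lemma~\ref{lem:homA} to showing $[N(M),a]=0$ for $M\in\hom_A(V,W)$, expand with the braided derivation property so that the $[M,a]$ terms vanish, and observe that the two surviving $\dd(c)$-contributions cancel precisely because the fibred product forces the two connections to share the same $I[1]$-component. The paper likewise writes out only the trivial-associator case explicitly and defers the general bookkeeping, so your treatment matches it in both substance and level of detail.
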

\begin{proof}
Let $((L^\prime,c),(L,c))\in \mathrm{con}(W)\times_{I[1]} \mathrm{con}(V)$ be an arbitrary
element. Applying $\DDD$ gives the element
\begin{flalign}
\big(\LLL(L^\prime\, ) - \RRR(L),c\big) \in \mathrm{con}(\hom(V,W)) \subseteq \mathrm{end}(\hom(V,W))\times I[1]~,
\end{flalign}
where we regard $K := \LLL(L^\prime\, ) - \RRR(L)$  as a $k$-linear map
$K: \hom(V,W) \to \hom(V,W)$. We have to prove that
$K$ restricts to a $k$-linear map $K : \hom_A(V,W)\to \hom_A(V,W)$ on the $k$-submodules $\hom_A(V,W)\subseteq \hom(V,W)$ 
given in (\ref{eqn:hom}).
This amounts to showing that
\begin{flalign}\label{eqn:homcontmp}
\zeta([\,\cdot\,,\,\cdot\,])\big(K(M)\big) =0 \in \hom(A,\hom(V,W))~,
\end{flalign}
for all $M\in \hom_A(V,W)$. Let us for the moment consider the case of
trivial associator $\phi=1\otimes 1\otimes 1 $. Then the equality (\ref{eqn:homcontmp}) can be easily verified
by acting on generic homogeneous elements $a\in A$, which yields the equation
\begin{flalign}
\nn \left(\zeta([\,\cdot\,,\,\cdot\,])\big(K(M)\big)\right)(a) &=  \big[K(M),a\big]\\[4pt]
&=\big[L^\prime \bullet M - (-1)^{\vert M\vert\,\vert L\vert}\,(R^{(2)}\ra M)\bullet (R^{(1)}\ra L),a\big]=0~.
\end{flalign}
This equation follows from
\begin{subequations}
\begin{flalign}
\nn \big[L^\prime\bullet M,a\big] &= L^\prime \bullet \big[M,a \big] + 
(-1)^{\vert M\vert \,\vert a\vert}~\big[L^\prime, R^{(2)}\ra a \big]\bullet \big(R^{(1)}\ra M \big)\\[4pt]
\nn &= 0 +(-1)^{\vert M\vert \,\vert a\vert}~\big(\dd(c)\big)\big(R^{(2)}\ra a\big) \,\big(R^{(1)}\ra M \big)\\[4pt]
&= (-1)^{\vert L\vert\,\vert M\vert}~M\, \big(\dd(c)\big)(a)~,
\end{flalign}
where in the last equality we used $\vert L\vert = \vert L^\prime\, \vert$, and
\begin{flalign}
\nn \big[(R^{(2)}\ra M)\bullet (R^{(1)}\ra L),a\big] &= (R^{(2)}\ra M)\bullet \big[R^{(1)}\ra L ,a\big]\\
\nn &\quad + (-1)^{\vert L\vert \,\vert a\vert}~\big[R^{(2)}\ra M,\widetilde{R}{}^{(2)}\ra a \big]\bullet
\big((\widetilde{R}{}^{(1)}\, R^{(1)}) \ra L \big)\\[4pt]
&=M \big(\dd(c)\big)(a) +0~.
\end{flalign}
\end{subequations}
The equality (\ref{eqn:homcontmp}) is also true for the case of nontrivial associators, however
the corresponding calculation is much more lengthy and involved, and hence we will not write it out in detail.
\end{proof}


\subsection{Cochain twisting of connections}
The cochain twist deformation quantization functor preserves  connections.
\begin{propo}\label{propo:cochaincon}
Let $(A, \dd)$ be any differential calculus in $^H\MMM$, $V$ any
object in $ {}^H_{}{}^{}_{A}\MMM^{\mathrm{sym}}_{A} $ and $F$ any cochain twisting element based on $H$. 
Then the coherence map $\gamma \times \psi : \mathrm{end}_F(\FF(V))\times 
I_F[1] \to \FF(\mathrm{end}(V))\times \FF(I[1])$ restricts to an ${}^{H_F}_{}\MMM$-isomorphism
\begin{flalign}
\gamma \times \psi :  \mathrm{con}_F(\FF(V)) \longrightarrow \FF(\mathrm{con}(V))~.
\end{flalign}
\end{propo}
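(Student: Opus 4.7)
The plan is to mirror the argument of Proposition~\ref{propo:cochainder} (and Proposition~\ref{propo:diffcochain}), exploiting that $\FF: {}^H\MMM\to{}^{H_F}\MMM$ is a braided closed monoidal equivalence of categories and therefore preserves all limits; in particular it preserves equalizers, products, and the categorical product $\times$ used in the definition of $\mathrm{con}(V)$. Because $\mathrm{con}(V)$ is defined as the equalizer in (\ref{eqn:k1connections}), the object $\FF(\mathrm{con}(V))$ is the equalizer in ${}^{H_F}\MMM$ of the $\FF$-image of that diagram, namely
\begin{flalign*}
\xymatrix{
\FF(\mathrm{end}(V)) \times \FF(I[1]) \ar@<-1ex>[rrr]_-{\FF(\zeta(\,\widehat{l}\circ \ev\circ(\dd\otimes\id)\circ(\mathrm{pr}_2\otimes\id)))} \ar@<1ex>[rrr]^-{\FF(\zeta([\,\cdot\,,\,\cdot\,]\circ(\mathrm{pr}_1\otimes\id)))}  &&& \FF(\hom(A,\mathrm{end}(V))) \ .
}
\end{flalign*}
On the other hand, $\mathrm{con}_F(\FF(V))$ is by definition the equalizer in ${}^{H_F}\MMM$ of the analogous diagram in which $\mathrm{end}$, $\hom$, $\ev$, $\widehat{l}$, $\zeta$, $[\,\cdot\,,\,\cdot\,]$, $\dd$ and $I[1]$ are replaced by their $F$-twisted counterparts and $\FF(V)$, $\FF(A)$, $I_F[1]$ take the place of $V$, $A$, $I[1]$.

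The next step is to produce an isomorphism between these two parallel-pair diagrams whose left-hand vertical arrow is exactly $\gamma\times\psi$. The right-hand vertical arrow is the composition
$\hom_F(\FF(A),\mathrm{end}_F(\FF(V)))\xrightarrow{\gamma\circ(\,\cdot\,)}\hom_F(\FF(A),\FF(\mathrm{end}(V)))\xrightarrow{\gamma}\FF(\hom(A,\mathrm{end}(V)))$,
each factor of which is an ${}^{H_F}\MMM$-isomorphism. One then has to verify that both squares commute. For the upper square (involving $[\,\cdot\,,\,\cdot\,]$), this reduces to the standard intertwining of the $F$-twisted internal composition with the untwisted one via $\gamma$ (cf.~\cite[Proposition~2.16]{Barnes:2014}), together with the analogous compatibility of the twisted $\widehat{l}_F$ and the untwisted $\widehat{l}$, and the naturality of the currying bijection $\zeta$ under $\FF$ — all of which already appeared implicitly in the proof of Proposition~\ref{propo:cochainder}. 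For the lower square (involving $\dd$), the key observation is that the twisted differential is defined in (\ref{eqn:twisteddifferential}) precisely by $\dd_F=\gamma^{-1}\circ\FF(\dd)\circ\psi$, so the required compatibility is essentially definitional once one combines it with the intertwining of $\ev_F$ and $\ev$ via $\gamma$ and $\varphi$.

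Once both squares commute, universality of the equalizer supplies a unique ${}^{H_F}\MMM$-isomorphism $\mathrm{con}_F(\FF(V))\to\FF(\mathrm{con}(V))$. Since $\mathrm{con}(V)$ and $\mathrm{con}_F(\FF(V))$ are realized as subobjects of $\mathrm{end}(V)\times I[1]$ and $\mathrm{end}_F(\FF(V))\times I_F[1]$ respectively, this unique isomorphism must be the restriction of $\gamma\times\psi$, which is the desired conclusion.

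The main obstacle is the verification that both squares commute — specifically the bookkeeping of the coherence morphisms $\gamma$, $\varphi$ and $\psi$ when propagating them through the currying $\zeta$, the evaluation $\ev$, the internal composition $\bullet$, and the bracket $[\,\cdot\,,\,\cdot\,]$. This is a routine but lengthy exercise of the same type performed in (\ref{eqn:Fpreserveslimits}) within the proof of Proposition~\ref{propo:cochainder}, and once that bookkeeping is carried out the rest of the argument is formal.
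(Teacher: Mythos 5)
Your proposal is correct and follows essentially the same route as the paper: the paper's proof likewise reduces to the commutativity of the two-square diagram relating the twisted and untwisted equalizer data via $\gamma\times\psi$ and $\gamma\circ(\gamma\circ(\,\cdot\,))$, cites Proposition~\ref{propo:cochainder} for the limit-preservation and universality argument, and describes the coherence-morphism bookkeeping as a straightforward but lengthy calculation. Your additional remarks on why each square commutes (the definitional compatibility of $\dd_F$ and the intertwining of $\bullet$, $\ev$ and $\widehat{l}$ with $\gamma$) correctly identify where the real work lies.
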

\begin{proof}
The proof follows that of Proposition~\ref{propo:cochainder} and 
it requires showing commutativity of the diagram
\begin{flalign}
\xymatrix{
\ar[dd]_-{\gamma\times \psi}\mathrm{end}_F(\FF(V)) \times I_F[1] \ar@<-1ex>[rrrrr]_-{\zeta_F(\widehat{l}_F\circ \ev_F \circ (\dd_F\otimes_F\id)\circ (\mathrm{pr}_2\otimes_F \id))} 
\ar@<1ex>[rrrrr]^-{\zeta_F([\,\cdot\,,\,\cdot\,]_F\circ (\mathrm{pr}_1\otimes_F \id))}   &&&&& \hom_F(\FF(A), \mathrm{end}_F(\FF(V)))\ar[d]^-{\gamma\circ (\,\cdot\,)}\\
&&&&& \hom_F(\FF(A), \FF(\mathrm{end}(V))) \ar[d]^-{\gamma}\\
\FF(\mathrm{end}(V)) \times \FF(I[1]) \ar@<-1ex>[rrrrr]_-{\FF(\zeta(\widehat{l}\circ \ev \circ (\dd\otimes\id)\circ (\mathrm{pr}_2\otimes \id)))} 
\ar@<1ex>[rrrrr]^-{\FF(\zeta([\,\cdot\,,\,\cdot\,]\circ (\mathrm{pr}_1\otimes \id)))} &&&&&\FF\big(\hom(A, \mathrm{end}(V))\big)
}
\end{flalign}
in ${}^{H_F}\MMM$, which is a straightforward but slightly lengthy calculation.
\end{proof}
\begin{rem}
Applying this result to  Example \ref{ex:Gequibundles},
we find in particular that on any cochain twist deformation 
of any $G$-equivariant vector bundle there exists at least one connection, because
there exist connections on classical $G$-equivariant vector bundles (in the smooth category).
\end{rem}


\section{\label{sec:curvature}Curvature}
We shall develop the notion of curvature of connections on objects in
${}^H_{}{}^{}_{A}\MMM^{\mathrm{sym}}_{A}$ and compute explicitly the
curvatures of tensor product connections given by our construction in Theorem~\ref{theo:sumcon}.
We conclude by giving a brief sketch of how our formalism
can be used to describe a noncommutative and nonassociative theory of gravity
that is based on Einstein-Cartan geometry.
Throughout this section we have to make the assumption that
$H$ is a triangular quasi-Hopf algebra, which is in particular satisfied for our main examples of interest, see
Examples~\ref{ex:classicalGMan}, \ref{ex:Gequibundles} and \ref{ex:classicaldiffcalc}.


\subsection{Definition and properties}
For any object $V$ in ${}^H_{}{}^{}_{A}\MMM^{\mathrm{sym}}_{A}$, we define the ${}^H\MMM$-morphism
\begin{flalign}
\nn [\![\, \cdot\, , \, \cdot\, ]\!]: (\mathrm{end}(V) \times I[1]) \otimes (\mathrm{end}(V) \times I[1]) &\longrightarrow \mathrm{end}(V)~,\\
(L,c) \otimes (L^\prime, c^\prime\, ) &\longmapsto [L,L^\prime\, ]~.\label{eqn:doublebracket}
\end{flalign}

\begin{lem}\label{lem:bracketforcurv}
Let $H $ be a triangular quasi-Hopf algebra. Let
$(A, \dd)$ be a differential calculus in $^H\MMM$ and $V$ any object in ${}^H_{}{}^{}_{A}\MMM^{\mathrm{sym}}_{A}$. 
Then (\ref{eqn:doublebracket}) restricts to an ${}^H\MMM$-morphism
\begin{flalign}
[\![\, \cdot \, , \, \cdot\, ]\!]: \mathrm{con}(V) \otimes \mathrm{con}(V)&\longrightarrow \mathrm{end}_A(V)~.
\end{flalign}
\end{lem}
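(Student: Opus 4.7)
By Remark \ref{rem:diff0}, $\mathrm{end}_A(V)=\mathrm{diff}^0(V)$, and by Lemma \ref{lem:diff} with $n=0$, the restriction claim is equivalent to showing
$$\big[\,[L,L']\,,\,\widehat{l}(a)\,\big]=0\qquad\text{in }\ \mathrm{end}(V),$$
for all $(L,c),(L',c')\in\mathrm{con}(V)$ and $a\in A$, where the outer bracket denotes the internal commutator \eqref{eqn:commutator}. So the goal is to verify this vanishing by combining the connection condition with the nilpotency of $\dd$.

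The plan is to apply the braided Jacobi identity of Proposition \ref{prop:bracket}(ii), which is at our disposal by the triangularity hypothesis on $H$, to rewrite this iterated bracket as a sum of two cyclic permutations of the form $\big[[L',\widehat{l}(a)],L\big]$ and $\big[[\widehat{l}(a),L],L'\big]$, each carrying appropriate braiding and associator decorations. The innermost brackets $[L,\widehat{l}(a)]$ and $[L',\widehat{l}(a)]$ are then evaluated directly by the connection condition of Lemma \ref{lem:con}, producing elements in the image of $\widehat{l}:A\to\mathrm{end}(V)$. Braided antisymmetry from Proposition \ref{prop:bracket}(i), again valid in the triangular case, is used to reshape the remaining outer brackets of the form $[\widehat{l}(b),L]$ or $[\widehat{l}(b),L']$ into brackets of the form $[L,\widehat{l}(b')]$ or $[L',\widehat{l}(b')]$ with braided arguments, so that the connection condition can be applied a second time.

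This converts $\big[[L,L'],\widehat{l}(a)\big]$ into the $\widehat{l}$-image of a combination of iterated evaluations $\ev\big(\dd(c)\otimes\ev(\dd(c')\otimes a)\big)$ and $\ev\big(\dd(c')\otimes\ev(\dd(c)\otimes a)\big)$, decorated with $R$-matrix and $\phi$-factors. The key simplification is that $\dd(c)$ and $\dd(c')$ are $H$-invariant: since $\dd:I[1]\to\mathrm{der}(A)$ is an ${}^H\MMM$-morphism and $I[1]$ carries the trivial $H$-action, $h\ra\dd(c)=\epsilon(h)\,\dd(c)$ and likewise for $\dd(c')$. Combined with the counit normalizations of $R$ and $\phi$ and with the evaluation--composition identity \eqref{eqn:evcompcompatibility}, all braiding and associator decorations collapse, and the two iterated evaluations reduce to $\ev\big((\dd(c)\bullet\dd(c'))\otimes a\big)$ and $\ev\big((\dd(c')\bullet\dd(c))\otimes a\big)$, respectively. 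Both vanish identically by the nilpotency hypothesis of Definition \ref{defi:diffcalc}, which states precisely that $\bullet\circ(\dd\otimes\dd):I[1]\otimes I[1]\to\mathrm{diff}(A)$ is the zero morphism.

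The main obstacle is the careful bookkeeping of braiding and associator factors through the two applications of Jacobi and of braided antisymmetry, and checking that after these manipulations everything can indeed be pushed through $\widehat{l}$ using that $\widehat{l}$ is an ${}^H\AAA$-morphism. Conceptually the argument is the familiar observation that the commutator of two connections is $A$-linear because $\dd^2=0$; the execution requires repeated use of the standard $R$-matrix and $\phi$-identities as recalled in e.g.~\cite[Section 5.1]{Barnes:2014}.
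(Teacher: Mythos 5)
Your proposal is correct and follows essentially the same route as the paper's proof: reduce via the characterization of $\mathrm{end}_A(V)$ (the paper cites Lemma~\ref{lem:homA}, you equivalently use Remark~\ref{rem:diff0} and Lemma~\ref{lem:diff} with $n=0$) to showing $[[L,L'],a]=0$, then apply the braided Jacobi identity and antisymmetry, the connection condition of Lemma~\ref{lem:con} twice, the $H$-invariance of $\dd(c)$ together with the $R$-matrix and associator normalizations, and the evaluation--composition identity \eqref{eqn:evcompcompatibility} to reduce everything to $\ev\big((\dd(c)\bullet\dd(c'))\otimes a\big)$-type terms, which vanish by nilpotency. All the key ingredients and their order of deployment match the paper's argument.
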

\begin{proof}
By Lemma~\ref{lem:homA} it is sufficient to show that
\begin{flalign}
\big[\big[\!\big[(L,c),(L^\prime, c^\prime\, )\big]\!\big] \,,\,a\big] = \big[[L,L^\prime\, ],a\big]= 0~,
\end{flalign}
for all homogeneous $(L,c),(L^\prime,c^\prime\, )\in\mathrm{con}(V)$ and $ a \in A $. 
Using the braided Jacobi identity and braided antisymmetry of Proposition~\ref{prop:bracket} 
(this is where we need triangularity) we obtain
\begin{flalign}
\nn\big[[L,L^\prime\, ],a\big] &= - (-1)^{\vert L\vert \,(\vert L^\prime\vert + \vert a\vert)}\, 
\big[\big[R^{(2)}_{(1)} \, \phi^{(2)}\ra L^\prime , R^{(2)}_{(2)}\, \phi^{(3)}\ra a \big]\,,\, R^{(1)}\, \phi^{(1)}\ra L\big] \\
\nn &\quad\quad + (-1)^{\vert a\vert \, \vert L^\prime\vert}\, 
 \big[\big[\widetilde{R}{}^{(2)}\, \phi^{(-2)}\, R^{(1)}_{(1)}\ra L , \widetilde{R}{}^{(1)}\, \phi^{(-1)}\, R^{(2)}\ra a\big] \,,\, 
\phi^{(-3)}\, R^{(1)}_{(2)}\ra L^{\prime}\, \big]\\[4pt]
\nn &= - (-1)^{\vert L\vert \,(\vert L^\prime\vert + \vert a\vert)}\,  \big[\ev\big(\dd(c^\prime\, )\otimes (R^{(2)}\ra a)\big)\,,\, R^{(1)}\ra L \big]\\
\nn &\quad\quad + (-1)^{\vert a\vert \, \vert L^\prime\vert}\, \big[\ev\big(\dd(c)\otimes (R^{(2)}\ra a)\big)\,,\, R^{(1)}\ra L^\prime\, \big]\\[4pt]
\nn &=\big[L , \ev\big(\dd(c^\prime \, )\otimes a\big)\big] - (-1)^{\vert L\vert \,\vert L^\prime\vert} ~
\big[L^\prime, \ev\big(\dd(c)\otimes a\big)\big]\\[4pt]
\nn &=\ev\big(\dd(c)\otimes  \ev\big(\dd(c^\prime\, )\otimes a\big)\big) - (-1)^{\vert L\vert \,\vert L^\prime\vert} ~
\ev\big(\dd(c^\prime\, )\otimes \ev\big(\dd(c)\otimes a\big)\big)\\[4pt]
&=\ev\big(\big(\dd(c)\bullet\dd(c^\prime\, )\big)\otimes a\big)- (-1)^{\vert L\vert \,\vert L^\prime\vert} ~
\ev\big(\big(\dd(c^\prime\, )\bullet\dd(c)\big)\otimes a\big) =0~,
\end{flalign}
where we have also used Lemma~\ref{lem:con},
Lemma~\ref{lem:compproperties} together with the normalization
$(\epsilon\otimes \epsilon \otimes\id)(\phi)=1$ of the associator,
and nilpotency of $\dd : I[1] \to\mathrm{der}(A) $ from Definition \ref{defi:diffcalc}.
\end{proof}

With these techniques we can now define the curvature of a connection.
Since the curvature is supposed to be quadratic in the connections, we cannot realize the assignment
of curvatures as an ${}^H\MMM$-morphism. We shall employ the following element-wise definition.
\begin{defi}\label{def:curv}
Let $ (A, \dd) $ be a differential calculus in $^H\MMM$ and let $ V $ be an
object in  $ {}^H_{}{}^{}_{A}\MMM^{\mathrm{sym}}_{A} $. 
The curvature of a connection $\nabla := (L,1)\in \mathrm{con}(V)$ is the element
\begin{flalign}\label{eqn:curv}
\mathrm{Curv}(\nabla) :=  \big[\!\big[\nabla,\nabla\big]\!\big]\in \mathrm{end}_A(V)~.
\end{flalign}
\end{defi}
\begin{rem}
Given any connection $\nabla := (L,1)\in \mathrm{con}(V)$, we can define the
Bianchi tensor corresponding to $\nabla$ as
\begin{flalign}
\mathrm{Bianchi}(\nabla) := \ev \big(\DDD(\nabla, \nabla) \otimes \mathrm{Curv}(\nabla) \big)\in \mathrm{end}_A(V)~.
\end{flalign}
In contrast to the situation in classical differential geometry, here the Bianchi tensor in general does not vanish.
Hence, it may be interpreted as a measure of the noncommutativity and nonassociativity of $A$, 
$V$ and $\nabla$.
\end{rem}

Finally, we observe an additive property of the curvature of
the tensor product connections constructed in Theorem~\ref{theo:sumcon}.
\begin{propo}
Let $H$ be a triangular quasi-Hopf algebra, $ (A, \dd)$ a differential calculus in $^H\MMM$ 
and $ V, W $ two objects in $ {}^H_{}{}^{}_{A}\MMM^{\mathrm{sym}}_{A} $. 
Given any two connections $\nabla_V := (L,1)\in\mathrm{con}(V)$ and $\nabla_W := (L^\prime,1)
\in\mathrm{con}(W)$, the curvature of their sum satisfies
\begin{flalign}
\mathrm{Curv}(\nabla_V \obulplus \nabla_W) = 
\mathrm{Curv}(\nabla_V) \obultimes 1 + 1 \obultimes \mathrm{Curv}(\nabla_W) ~.
\end{flalign}
\end{propo}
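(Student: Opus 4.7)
The strategy is to unfold the definition and reduce everything to the identities of Lemma~\ref{lem:fortensorcon}~(ii), together with an analogous identity for the embedding via $1\obultimes(\,\cdot\,)$, and the braided antisymmetry of $[\,\cdot\,,\,\cdot\,]$ (which requires triangularity of $H$, already in the hypotheses). By Theorem~\ref{theo:sumcon} and the definition of $\obulplus$, the sum connection is $\nabla_V\obulplus\nabla_W=(L\obultimes 1+1\obultimes L^\prime,1)\in\mathrm{con}(V\otimes_A W)$, so by Definition~\ref{def:curv} and~\eqref{eqn:doublebracket} its curvature is
\begin{flalign*}
\mathrm{Curv}(\nabla_V\obulplus\nabla_W)=[L\obultimes 1+1\obultimes L^\prime,\,L\obultimes 1+1\obultimes L^\prime\,]~.
\end{flalign*}
The first step is to expand this by $k$-bilinearity of the internal commutator into four terms.

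Next, I would handle the two ``diagonal'' terms. The identity $[K\obultimes 1,L\obultimes 1]=[K,L]\obultimes 1$ is exactly Lemma~\ref{lem:fortensorcon}~(ii). The corresponding identity $[1\obultimes K,1\obultimes L]=1\obultimes [K,L]$ is not explicitly stated in the excerpt, but I would derive it from \eqref{eqn:compobultimesprop2} together with \eqref{eqn:compobultimesprop3} and the $H$-invariance of the unit $1=(\beta\ra\,\cdot\,)$: writing out $\bullet$ and the braiding on both sides, the factor of $1$ passes through the $R$-matrix action unchanged, and the difference of the two compositions yields $1\obultimes (K\bullet L - (-1)^{|K||L|}(R^{(2)}\ra L)\bullet (R^{(1)}\ra K))=1\obultimes[K,L]$. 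These two identities give the two summands on the right-hand side of the claim.

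The remaining step is to show the two cross terms vanish. The term $[1\obultimes L^\prime,L\obultimes 1]=0$ is the second half of Lemma~\ref{lem:fortensorcon}~(ii). For $[L\obultimes 1,1\obultimes L^\prime\,]$ I would invoke braided antisymmetry of the internal commutator (Proposition~\ref{prop:bracket}~(i), which uses triangularity) to rewrite it as $-(-1)^{|L||L^\prime|}[R^{(2)}\ra(1\obultimes L^\prime\,),\,R^{(1)}\ra(L\obultimes 1)]$; since $1$ is $H$-invariant, the $H$-action distributes so that $R^{(2)}\ra(1\obultimes L^\prime\,)=1\obultimes(R^{(2)}\ra L^\prime\,)$ and $R^{(1)}\ra(L\obultimes 1)=(R^{(1)}\ra L)\obultimes 1$, and a second application of the vanishing cross identity from Lemma~\ref{lem:fortensorcon}~(ii) kills the expression. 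Collecting the four terms gives $\mathrm{Curv}(\nabla_V)\obultimes 1+1\obultimes\mathrm{Curv}(\nabla_W)$.

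The only real subtlety I anticipate is checking the auxiliary identity $[1\obultimes K,1\obultimes L]=1\obultimes[K,L]$ carefully, since the paper only supplies the ``left slot'' version in Lemma~\ref{lem:fortensorcon}; the other steps are routine expansions once bilinearity, triangularity and the $H$-invariance of $1$ are used.
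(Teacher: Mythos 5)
Your proposal is correct and follows essentially the same route as the paper: expand $[\![\nabla_V\obulplus\nabla_W,\nabla_V\obulplus\nabla_W]\!]$ bilinearly into four terms, keep the diagonal terms via the $\obultimes$-compatibility identities of Lemma~\ref{lem:tensorproductproperties} (equivalently Lemma~\ref{lem:fortensorcon}~(ii) and its right-slot analogue), and kill the cross terms using braided antisymmetry from Proposition~\ref{prop:bracket}~(i), which is where triangularity enters. The only difference is that you spell out the auxiliary identity $[1\obultimes K,1\obultimes L]=1\obultimes[K,L]$, which the paper leaves implicit in its citation of Lemma~\ref{lem:tensorproductproperties}; your derivation of it from \eqref{eqn:compobultimesprop2} and the $H$-invariance and degree~$0$ of the unit $1=(\beta\ra\,\cdot\,)$ is sound.
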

\begin{proof}
The proof follows from a simple calculation
\begin{flalign}
\nn \mathrm{Curv}(\nabla_V \obulplus \nabla_W) &= [L \obultimes 1 + 1 \obultimes L^\prime, 
L \obultimes 1 + 1 \obultimes L^\prime\, ]\\[4pt]
\nn &= [L,L] \obultimes 1 + 1 \obultimes [L^\prime,L^\prime\, ]\\[4pt]
&= \mathrm{Curv}(\nabla_V) \obultimes 1 + 1 \obultimes \mathrm{Curv}(\nabla_W) ~,
\end{flalign}
where we have used the properties in Lemma~\ref{lem:tensorproductproperties} and the
 braided antisymmetry of the internal commutator from Proposition~\ref{prop:bracket}~(i).
\end{proof}


\subsection{Einstein-Cartan geometry}
We conclude with a brief sketch of how our formalism
can be used to describe a noncommutative and nonassociative 
theory of gravity coupled to Dirac fields. Our considerations are based on Einstein-Cartan geometry
and its generalization to noncommutative geometry which was developed in~\cite{Catellani:2009}. 
Our strategy is to formulate classical Einstein-Cartan geometry in our abstract language
and then to give an outline of its cochain twist deformation quantization, which will lead
to a noncommutative and nonassociative gravity theory.
\sk

Let $M$ be any parallelizable manifold of dimension $m$. Associated
to $M$ is the Hopf algebra $H = U\, \mathrm{Vec}(M)$ (with trivial associator $\phi = 1\otimes 1\otimes 1$)
given by the universal enveloping
algebra of the Lie algebra of vector fields on $M$, which is triangular with trivial  $R$-matrix $R=1\otimes 1$.
We take $A := \Omega^\sharp(M)$ to be the exterior algebra 
of differential forms on $M$ and $V := \Omega^\sharp(M,S)$ to be the $A$-bimodule 
of spinor-valued differential forms. Then $A$ is an object in ${}^H\AAA^{\mathrm{com}}$
and $V$ is an object in $ {}^H_{}{}^{}_{A}\MMM^{\mathrm{sym}}_{A} $ 
for our choice of triangular Hopf algebra $H$. A vielbein is an invertible element $E \in  \mathrm{end}_A(V)\simeq 
\Omega^\sharp(M,\mathrm{end}(S))$ of the form 
(here and in the following summations over repeated pairs of indices are understood)
\begin{flalign}
E = E^a \,\gamma_a \ ,
\end{flalign}
where $E^a \in \Omega^1(M)$ are the components of the vielbein,
$\gamma_a$ are the gamma-matrices associated with the spin representation $S$
and  $a = 1,\dots, m$.  A spin connection is a connection $\nabla:= (L,1) \in \mathrm{con}(V)$ 
of the form
\begin{flalign}
\nabla = \big(\dd - \tfrac{1}{2}\, \omega^{ab} \, [\gamma_a,\gamma_b] \,,\, 1\big)~,
\end{flalign}
where $\omega^{ab} \in \Omega^1(M)$ are the components of the spin connection 
and $\dd$ is the exterior derivative. 
A Dirac field is an element $\psi \in V_0$ of $\bbZ$-degree 0 and 
a conjugate Dirac field is an element 
$\overline{\psi} \in {V^\vee}_0 := {\hom_A(V,A)}_0$ in the dual module of $\bbZ$-degree 0. 
Using Theorem~\ref{theo;adjointcon} we can induce a connection on conjugate Dirac fields by taking
\begin{flalign}
\nabla^\vee := \DDD\big(\nabla\,,\, (\dd,1)\big)\in\mathrm{con}(V^\vee)~.
\end{flalign}
Since $V$ is a free $A$-module there is an isomorphism
$V \otimes_A V^\vee \simeq \mathrm{end}_A(V)$, which we shall always suppress
from our notation.
\sk

We can now write down the Lagrangian of Einstein-Cartan gravity coupled to a Dirac field
in $ m $ dimensions within our formalism as the top form
\begin{flalign}\label{eqn:tracedlagrangian}
L^{(m)} := \mathrm{Tr} \big(\mathcal{L}^{(m)}\big) \in A_{m} = \Omega^m(M)~,
\end{flalign}
where 
\begin{multline}\label{eqn:lagrangian}
\mathcal{L}^{(m)} := \sqrt{-1} \ \mathrm{Curv}(\nabla) \bullet \underbrace{E \bullet E \bullet \cdots \bullet E}_\text{$m{-}2$ times} \bullet \gamma_5 \\[4pt]
-\Big(\ev\big(\nabla \otimes \psi \big) \otimes_A \overline{\psi} - \psi \otimes_A \ev\big(\nabla^\vee \otimes \overline{\psi} \, \big)\Big)
\bullet \underbrace{E \bullet E \bullet \cdots \bullet E}_\text{$m{-}1$ times} \bullet \gamma_5~
\end{multline}
and $ \gamma_5\in\mathrm{end}_A(V)$ (of $\bbZ$-degree $0$) 
is absent when $m$ is odd. The trace in (\ref{eqn:tracedlagrangian}) 
is the ${}^H\MMM$-morphism
\begin{flalign}\label{eqn:trace}
\mathrm{Tr}: \mathrm{end}_A(V) \longrightarrow A~
\end{flalign}
given by the pointwise trace in $\mathrm{end}_A(V) \simeq \Omega^\sharp(M, \mathrm{end}(S))$.
\sk

To obtain a noncommutative and nonassociative theory of gravity
one can quantize $A$ and $V$ by a suitable cochain twist $F$, 
see e.g.\ \cite{Mylonasb,Aschieri:2015roa} and~\cite[Example 6.5]{Barnes:2014} for the explicit examples
of relevance to the string theory applications mentioned in Section~\ref{sec:intro}. The construction of the Lagrangian (\ref{eqn:tracedlagrangian})
then proceeds in the same way as above, but now with all internal constructions made in the
braided closed monoidal category ${}^{H_F}\MMM$ rather than in ${}^H\MMM$. 
As cochain twisting may induce a nontrivial associator in ${}^{H_F}\MMM$, our definition of 
the Lagrangian (\ref{eqn:tracedlagrangian}) has to be supplemented with a bracketing convention
for the internal compositions in \eqref{eqn:lagrangian}. The study of which choice of bracketing leads to a physically reasonable
model for noncommutative and nonassociative gravity is beyond the scope of the present paper
and will be addressed in future work.


\section*{Acknowledgements}
We thank Paolo Aschieri and Edwin Beggs for helpful discussions. This work was supported in part by the Action 
MP1405 QSPACE from the European Cooperation in Science and Technology
(COST). G.E.B.\ is a Commonwealth Scholar, funded by the UK government. 
The work of A.S.\ is supported by a Research Fellowship of the Deutsche Forschungsgemeinschaft (DFG, Germany). 
The work of R.J.S.\ is supported in part by the Consolidated Grant ST/L000334/1 
from the UK Science and Technology Facilities Council (STFC). 



\end{document}